\journal{EJOR}
\date{}
\let\today\relax
\def\ps@pprintTitle{%
    \let\@oddhead\@empty
    \let\@evenhead\@empty
    \def\@oddfoot{\footnotesize\itshape
         {} \hfill\today}%
    \let\@evenfoot\@oddfoot
    }
\newcolumntype{C}{>{\Centering\arraybackslash}X} 
\newlist{lista}{enumerate}{1}
\setlist[lista]{label={\upshape{(\alph*)}}, leftmargin=*,align=right, widest=w}
\newlist{listi}{enumerate}{1}
\setlist[listi]{label={\upshape(\roman*\upshape)},leftmargin=*,align=right, widest=iii}
\newtheorem{theorem}{Theorem}[section]
\newtheorem{lema}[theorem]{Lemma}
\newtheorem{prop}[theorem]{Proposition}
\theoremstyle{definition}
\newtheorem{defi}[theorem]{Definition}
\newtheorem{assump}[theorem]{Assumption}
\theoremstyle{remark}
\newtheorem{remark}[theorem]{Remark}
\newtheorem{example}[theorem]{Example}
\DeclareMathOperator*{\conv}{conv}
\newcommand{\convS}{\ensuremath{\conv(\mathcal{A})}}
\newcommand{\convA}{\ensuremath{\conv(\mathcal{A})}}
\newcommand{\ISet}{\ensuremath{\mathcal{I}}}
\newcommand\R{\mathbb{R}}
\newcommand{\Rm}{\mathbb{R}^m}
\newcommand{\Rn}{\mathbb{R}^n}
\newcommand\N{\mathbb{N}}
\DeclareMathOperator*{\argmin}{arg\, min}
\DeclareMathOperator*{\argmax}{arg\, max}
\newcommand\norm[1]{\left\lVert #1 \right\rVert}
\newcommand\abs[1]{\left\lvert #1 \right\rvert}
\newcommand{\A}{{\cal A}}
\newcommand{\x}{\bar{x}}
\newcommand{\xs}{x^{\star}}
\newcommand{\ys}{y^{\star}}
\newcommand{\fstar}{f^{\star}}
\begin{document}
\begin{frontmatter}

\title{First-order methods for the convex hull membership problem} 

\author[1]{Rafaela Filippozzi}
\ead{rafaela.filippozzi@gmail.com}

\affiliation[1]{organization={Department of Mathematics,  Federal University of Santa Catarina},
	city={Florianopolis},
	state={SC},
	postcode={88040-900}, 
	country={Brazil}
}

\author[1]{Douglas S.~Gon\c{c}alves\corref{cor1}}
\ead{douglas.goncalves@ufsc.br}

\author[ufscbnu]{Luiz-Rafael Santos}
\ead{l.r.santos@ufsc.br}

\affiliation[ufscbnu]{organization={Department of Mathematics,  Federal University of Santa Catarina},
	city={Blumenau},
	state={SC},
	postcode={89065-300},
	country={Brazil}
}
\cortext[cor1]{Corresponding author}


\begin{abstract}\small
	The convex hull membership problem (CHMP) consists in deciding whether a certain point belongs to the convex hull of a finite set of points, a decision problem with important applications in computational geometry and in foundations of linear programming. 
In this study, we review, compare and analyze first-order methods for CHMP, namely, Frank-Wolfe type methods, Projected Gradient methods and a recently  introduced geometric algorithm, called Triangle Algorithm (TA). We discuss the connections between this algorithm and Frank-Wolfe, showing that TA can be  interpreted as an inexact Frank-Wolfe. Despite this similarity, TA is strongly based on a theorem of alternatives known as distance duality. 
By using this theorem, we propose suitable stopping criteria for CHMP to be integrated into Frank-Wolfe type and Projected Gradient,  
specializing these methods to the membership decision problem. 
Interestingly, Frank-Wolfe integrated with such stopping criteria coincides with a greedy version of the Triangle Algorithm which is, in its turn, equivalent to an algorithm due to von Neumann.  
We report numerical experiments on random instances of CHMP, carefully designed to cover different scenarios, that indicate which algorithm is preferable according to the geometry of the convex hull and the relative position of the query point. 
Concerning potential applications, we present two illustrative examples, one related to linear programming feasibility problems and another related to image classification problems.

\end{abstract}

\begin{keyword}
	 Convex programming \sep Convex hull membership problem \sep Triangle algorithm \sep Frank-Wolfe algorithms.
\end{keyword}

\end{frontmatter}

\section{Introduction} \label{sec:intro}
Let ${\cal A}\coloneqq \{v_1,v_2,\dots, v_n\} \subset \mathbb{R}^m$ be given and consider a point $p\in \mathbb{R}^m$. The \emph{convex hull membership problem} (CHMP) consists in deciding whether
 \begin{equation}
     p \in \convA, \label{eq:chmp}
 \end{equation}
where $\convA$ denotes the convex hull of ${\cal A}$. 
This problem is related to fundamental concepts in linear programming 
and finds important applications in  computational geometry. 

Throughout this paper, we denote by $e \in \Rn $ the vector whose $n$ components are all equal to one,  by $v^T p$  the Euclidean inner product between vectors $v,p \in \Rm$  and   by $\| \cdot\|$ its induced Euclidean norm.  We also denote the Euclidean distance between $v$ and $p$ as $d(v,p) \coloneqq  \| v - p \|$, the Euclidean ball centered in $v$ with radius $\rho$ as $B_\rho(v) \coloneqq \{p\in \R^m \mid d(v,p) < \rho \}$ and the convex combination or segment  between $v$ and $p$ as $[v,p]$. {The proofs  presented are three fold: for new results, when the proof is different from previous proofs in the literature or when the proof contributes to a subsequent discussion.}

Let $A \coloneqq [v_1 \ v_2 \  \cdots \ v_n ] \in \R^{m \times n}$ be the matrix where each column is  one of the $n$ points of $\mathcal{A}$.
One can see that  $p \in \convA$ if and only if $p$ is a convex combination of the columns of $A$. Thus,  we can formally describe the convex hull membership problem~\eqref{eq:chmp} as the following decision problem: 
\begin{equation}
\text{Is there any} \ x\in \Rn \text{ such that } Ax = p, e^T x =1, x \geq 0 ? \label{eq:lp}
\end{equation}
Problem~\eqref{eq:lp} is a linear programming feasibility problem whose affirmative answer ensures $p \in \convA$.   
Such feasibility problem can also be cast as the following quadratic programming problem:
\begin{equation}\label{prob:quad}
\begin{aligned}
\min_{x \in \Delta_n} & \quad \dfrac{1}{2} \| Ax - p \|^2 =: {\Phi(x)}, \\
\end{aligned}
\end{equation}
where $\Delta_n \coloneqq \{ x \in \Rn \mid  e^T x = 1,  x \ge 0 \}$ is the unit simplex in $\Rn$. Clearly, $p \in \convA$ when the optimal value of \eqref{prob:quad} is zero.
Furthermore, if $x$ is a feasible point of \eqref{prob:quad}, then $y \coloneqq  Ax \in \convA$. Therefore, another possible formulation of \eqref{eq:chmp} is given by
\begin{equation}\label{prob:proj}
\begin{aligned}
\min_{y \in \Rm} & \quad \dfrac{1}{2} \| y - p \|^2 =: {\Psi(y)} \\
\text{s.t.} & \quad y \in \convA, 
\end{aligned}
\end{equation}
which achieves a zero optimal value if and only if $p \in \convA$.

With  formulations \eqref{eq:lp}, \eqref{prob:quad} and \eqref{prob:proj} in hand, one could simply apply classic methods for linear or quadratic programming in order to solve CHMP. 
Nevertheless,  \cite{Kalantari}, inspired by geometric ideas, proposes a specific algorithm for CHMP, called \emph{Triangle Algorithm} (TA), which shows remarkable numerical results \citep{KalantariComparacao} in comparison with the Simplex method applied to \eqref{eq:lp} or with Frank-Wolfe (FW) \citep{fw1956} 
applied to \eqref{prob:quad}. 

TA was also successfully employed to solve other important convex hull related problems. For example,  \citet{Kalantari:2019a} considered the more general problem of separation of two compact convex sets (CHMP is a special case, when one of the sets is a singleton).  \citet{awasthi2020} showed that TA is the building block for an algorithm to enumerate all extreme points of convex hulls in high dimension. 
For further recent developments involving the Triangle Algorithm see the works of \citet{Kalantari:2014b,Kalantari:2019,Kalantari:2019b,Kalantari:2020a}. 
{In particular,  \citet{Kalantari:2019b} consider the \emph{Spherical} CHMP where all the points belonging to $\A$ have unit norm and the point $p$ is the origin. The authors showed not only the equivalence between CHMP and spherical CHMP, but also explained how approximate solutions for the former provide approximate solutions for the latter.  Furthermore, it was show that a variant of the Triangle Algorithm for spherical CHMP almost obtains $O(1/\varepsilon)$ complexity as opposed to $O(1/\varepsilon^2)$ complexity of the standard TA and Frank-Wolfe (see Section~\ref{sec:rel} for details).}

In general lines, the Triangle Algorithm iterations can be described as follows. For an iterate $p'\in \convA$, the algorithm selects  $v_i \in {\cal A}$, such that $d(p',v_i) \geq d(p,v_i)$. Such $v_i \in \A$ is called a \emph{pivot}. If a pivot does not exist, we can stop and declare $p\notin \convA$; see Theorem~\ref{teo:dd}. 
Otherwise, update the next iterate to the point lying in the segment $[p',v]$ which is closest to $p$. 
The iterations continue until 
$\| p' - p \| \leq \varepsilon$, in which case it is declared $p \in \convA$. Figure~\ref{fig:agiter} illustrates the scheme (where the sequence of iterates is denoted by $p',p'', \dots$). 
\begin{figure*}
\centering
  \begin{subfigure}[t]{.48\linewidth}
    \centering
    \includegraphics[width=.8\textwidth]{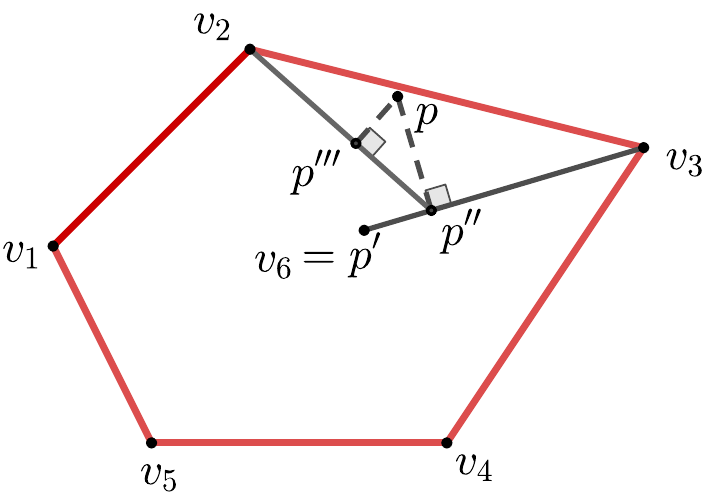}
    \caption{\label{fig:agiter-a} Triangle Algorithm when $p \in \convA$.}
  \end{subfigure}
  \begin{subfigure}[t]{.48\linewidth}
    \centering
    \includegraphics[width=.8\textwidth]{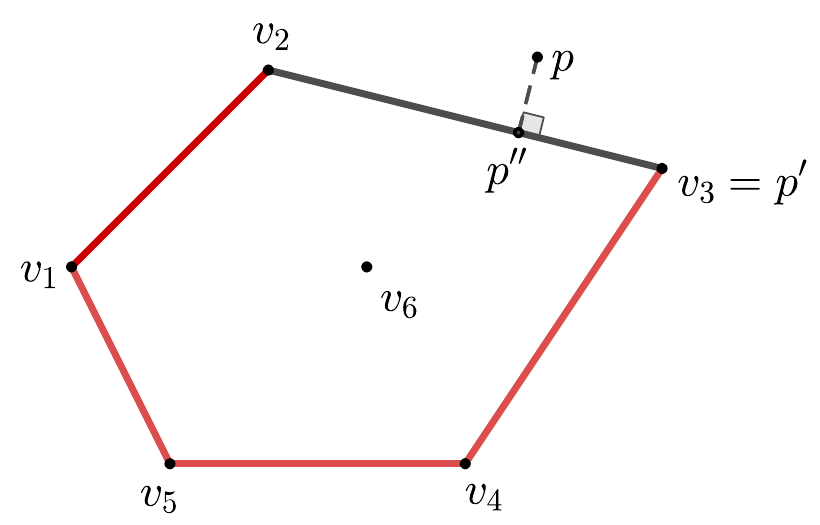}
    \caption{\label{fig:agiter-b} Triangle Algorithm when $p \notin \convA$.}
  \end{subfigure} 
	\caption{Iterations of the Triangle Algorithm.}	\label{fig:agiter}
\end{figure*}

The correctness of the Triangle algorithm follows from the following theorem of alternatives, called  \emph{distance duality theorem} and whose proof can be found in \citet[Theorem~4]{Kalantari}.

\begin{theorem}[Distance duality]\label{teo:dd}
For a given set ${\cal A}=\{v_1,v_2,\dots, v_n\} \subset \mathbb{R}^m$ and a point $p\in \mathbb{R}^m$, precisely one of the two conditions is satisfied:	
\begin{listi}
	\item  For all $p' \in \convA$, there exists $v_i\in {\cal A}$ such that $d(p',v_i) \geq d(p,v_i)$;
	\item There exists $p'\in \convA$ such that $d(p',v_i) < d(p,v_i)$, for all $i = 1,\dots , n.$
\end{listi}
\end{theorem}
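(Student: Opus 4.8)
The plan is to separate the formal content from the substantive content. As stated, conditions (i) and (ii) are negations of one another --- (i) asserts that \emph{every} $p'\in\convA$ has \emph{some} $v_i$ with $d(p',v_i)\ge d(p,v_i)$, while (ii) asserts that \emph{some} $p'\in\convA$ has \emph{no} such $v_i$ --- so the dichotomy ``precisely one holds'' is, by itself, immediate (any $p'$ realizing (ii) contradicts (i), and one of the two statements is the logical negation of the other). What makes the statement a genuine \emph{duality}, and what the correctness of the Triangle Algorithm actually rests on, is the identification of (i) with a membership certificate. I would therefore prove
\[
p\in\convA \iff \text{condition (i) holds},
\]
which yields the equivalence of (ii) with $p\notin\convA$, and hence the theorem.

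For the direction ``$p\notin\convA \Rightarrow$ (ii)'' I would exhibit the witness explicitly. Since $\mathcal{A}$ is finite, $\convA$ is a nonempty compact (polytopal) set, so the Euclidean projection $p'\coloneqq\argmin_{y\in\convA}\|y-p\|$ is well defined, and $p'\neq p$ because $p\notin\convA$. The obtuse-angle optimality condition for the projection gives $(p-p')^T(v_i-p')\le 0$ for every $i$. Expanding
\[
d(p,v_i)^2=\|(v_i-p')+(p'-p)\|^2=d(p',v_i)^2+2(v_i-p')^T(p'-p)+\|p'-p\|^2\ge d(p',v_i)^2+\|p'-p\|^2,
\]
and using $\|p'-p\|>0$, one gets $d(p,v_i)>d(p',v_i)$ for all $i$, i.e.\ $p'$ certifies (ii).

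For the converse ``$p\in\convA \Rightarrow$ (i)'' I would argue by contradiction: suppose some $p'\in\convA$ satisfies $d(p',v_i)<d(p,v_i)$ for all $i$, and fix a representation $p=\sum_{i=1}^n x_i v_i$ with $x\in\Delta_n$. Expanding each strict inequality $\|p'-v_i\|^2<\|p-v_i\|^2$ gives $(p'-p)^Tv_i>\tfrac12(\|p'\|^2-\|p\|^2)$; substituting this into the identity $(p'-p)^T(p'-v_i)=\|p'\|^2-p^Tp'-(p'-p)^Tv_i$ and simplifying yields $(p'-p)^T(p'-v_i)<\tfrac12\|p'-p\|^2$ for every $i$. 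Averaging these with the weights $x_i$ and using $\sum_i x_i v_i=p$ and $\sum_i x_i=1$,
\[
\|p'-p\|^2=(p'-p)^T\Bigl(p'-\sum_{i=1}^n x_i v_i\Bigr)=\sum_{i=1}^n x_i\,(p'-p)^T(p'-v_i)<\tfrac12\|p'-p\|^2,
\]
which is impossible; hence no such $p'$ exists and (i) holds.

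The projection argument is routine once one notes that $\convA$ is closed (so that the projection exists), the only thing to verify being $p'\neq p$. The genuinely delicate step is the algebra in the converse: rewriting $(p'-p)^T(p'-v_i)$ in a form that can be averaged against the \emph{unknown} coefficients of the representation $p=\sum_i x_i v_i$, so that the $n$ local inequalities $d(p',v_i)<d(p,v_i)$ collapse to the single contradiction $\|p'-p\|^2<\tfrac12\|p'-p\|^2$. Keeping every inequality pointing the right way through that manipulation is where I would be most careful.
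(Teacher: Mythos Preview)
Your proof is correct. The logical observation that (i) and (ii) are formal negations is apt, and both substantive directions are carried out cleanly: the projection argument for $p\notin\convA\Rightarrow$(ii) is the standard one, and the averaging trick in the converse is valid (note that $p'\neq p$ is forced by the strict inequalities, so the final contradiction $\|p'-p\|^2<\tfrac12\|p'-p\|^2$ is genuine).

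As for comparison with the paper: there is nothing to compare against. The paper does not prove Theorem~\ref{teo:dd}; it only states it and refers the reader to \citet[Theorem~4]{Kalantari} for the proof. What the paper \emph{does} say immediately afterwards --- that condition~(i) holds iff $p\in\convA$ and condition~(ii) holds iff $p\notin\convA$ --- is precisely the substantive content you chose to establish, so your decomposition of the statement into ``trivial dichotomy'' plus ``membership characterization'' matches exactly how the paper uses the result. Your argument is self-contained and would serve as a complete replacement for the citation.
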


Note that Theorem~\ref{teo:dd} implies that either for every $p' \in \convA$ there exists $v\in \cal{A}$ such that $v$ is closer to $p$ than to $p'$, or there exists $p' \in \convA$ such that the bisector hyperplane orthogonal to the segment  $[p',p]$  separates $p$ from $\convA$ 
(see Figure~1 in \cite{Kalantari}). 

\cite{Kalantari} proved that the first condition is valid if and only if $p \in \convA,$ and the second condition if and only if $p \notin \convA$. 
If the latter holds, we say that $p'$ is a \emph{witness} that $p \notin \convA$ because the hyperplane  
\begin{equation}\label{eq:hiperbis}
 H_{[p,p']}\coloneqq  \{y\in \Rm \mid (p - p')^T y = (p - p')^T (p'+p)/2\}
\end{equation} 
strictly separates $p$ from $\convA$. 
Furthermore, when $p \notin \convA$, for a $p$-witness $p'$, we have
\begin{equation}\label{eq:witproj}
    \dfrac{1}{2} \|p'-p\| \leq \Delta \leq \|p'-p\|,
\end{equation}
where $\Delta := \min \{ d(v,p) \mid v \in \convA \}$. Thus, the distance from $p$ to $\convA$ is approximated within a factor of two.


The distance duality theorem provides unique features to the Triangle algorithm when compared to first-order algorithms applied to the  optimization formulations of CHMP.  For instance, one can consider applying the classic FW to \eqref{prob:quad} or \eqref{prob:proj}. We shall show later (see Section~\ref{sec:rel}) that the subproblem that must be solved at each FW iteration  corresponds to find $v_i \in \argmin \{  (p'-p)^T v_j \mid v_j \in \A \}$ whereas $v_i \in \A$ is a pivot for TA if and only if $(p'-p)^T v_i \leq (\|p'\|^2 - \|p\|^2)/2$ (see Lemma~\ref{lema:pivo}).  
Thus, the first alternative of Theorem~\ref{teo:dd} relaxes this requirement of the ``optimal'' pivot chosen by FW. Moreover, the second alternative of Theorem~\ref{teo:dd} provides a smart stopping criterion, allowing us to state that $p \notin \convA$ without the need of solving \eqref{prob:quad} (or \eqref{prob:proj}) to $\varepsilon$-optimality.

Yet,  TA is similar to FW applied to \eqref{prob:proj} in the sense that it keeps its iterates as convex combinations of points of $\A$, adding at most one new point per iteration to this combination. What is more, we will show that the so-called pivot of the Triangle algorithm can be seen as an inexact solution to the FW subproblem. Not surprisingly, convergence and iteration complexity results for the Triangle algorithm (and its variants) follow closely those of FW methods: namely, in general the  convergence is usually sublinear, and linear convergence is only achieved under stronger assumptions.

The contributions of this paper are the following.
\begin{listi}
 \item We study the similarities between Frank-Wolfe and the Triangle algorithm:  
 previous works on the Triangle algorithm \citep[for instance, ][]{Kalantari,Kalantari:2019a} briefly mention FW for CHMP and contrast its iteration mechanism with TA. Here, we show that FW with a stopping criterion based on distance duality (useful in case $p \notin \text{conv}({\cal A})$) can be interpreted as a greedy version of the Triangle algorithm: such version is equivalent to an algorithm attributed to von Neumann \cite[as communicated by][]{dantzig92}.  
Moreover, we also show that a pivot from TA can be interpreted as an inexact solution of Frank-Wolfe subproblem, in the sense of \citep{jaggi}.  
    \item Based on this relationship, we propose 
    variants of Frank-Wolfe that take advantage of the distance duality. 
    \item Relying on distance duality, we devise suitable stopping criteria for the considered first-order methods employed to solve CHMP.
    \item We compare the performance of the new variants with their classic versions and other first-order methods such as projected gradient methods through comprehensive  numerical experiments on CHMP: even though previous works \citep{KalantariComparacao,awasthi2020} 
    have reported some numerical experiments comparing the Triangle Algorithm with Frank-Wolfe, applied to the optimization problem \eqref{prob:quad}, specialized stopping criteria for CHMP were not used for FW. Aiming at a fair comparison, we integrate FW (and the other first-order methods) with specialized stopping criteria.  What is more, as far as we know, none of these works have considered Away-Step Frank-Wolfe (or Frank-Wolfe) applied to \eqref{prob:proj} and Projected Gradient methods applied to \eqref{prob:quad} in the computational experiments. 

    \item We illustrate the usefulness of these algorithms in two potential applications: linear programming feasibility problems and image classification problems.
\end{listi}

For that, this paper is organized as follows. In Section~\ref{sec:triangle} we provide a formal description of the Triangle algorithm along with its main convergence and iteration complexity results. Section~\ref{sec:fw} is devoted to a brief review of the Frank-Wolfe method for minimization of convex smooth functions over convex compact sets and a variant known as Away-Step Frank-Wolfe. In Section~\ref{sec:rel}, we discuss similarities between  Triangle and Frank-Wolfe algorithms applied to CHMP.  In Section~\ref{sec:new}, by using distance duality in FW we arrive at a greedy variant of the Triangle Algorithm. 
Section~\ref{sec:spg} reviews the spectral projected gradient method and discuss its application to CHMP. 
In Section~\ref{sec:stop} we devise specific stopping criteria for Frank-Wolfe and projected gradient methods when applied to the CHMP. 
The numerical experiments reported in Section~\ref{sec:numerics} show that the new variants of FW and Triangle algorithms perform better than their classical counterparts. We report that in randomly generated problems covering different scenarios (as described in Section~\ref{sec:random}) with respect to the geometry of the convex hull and the relative position of the query point. Two potential applications of these algorithms, one in the linear feasibility problem and the other on image classification problem, are detailed in Section~\ref{sec:lpfeas} and Section~\ref{sec:mnist}, respectively. Conclusions are drawn in Section~\ref{sec:end}.

\section{Triangle Algorithm} \label{sec:triangle}

In this section, we formally describe the Triangle algorithm and review the main convergence results from \cite{Kalantari}. We also provide complementary results that shall be useful later.

The Triangle algorithm takes as input a finite set of points ${\A}= \{v_1,\dots, v_n\} \subset \Rm$, a point $p\in \Rm$, and a tolerance $\varepsilon \in (0,1)$. At the $k$-th iteration, for a given point $p_k \in \convA$, the algorithm looks for $v\in \A$ that satisfies $d(v,p)\leq d(v,p_k)$.  This $v$ is called a \emph{pivot}. More specifically, we say that $v$ is a pivot at $p_k$. By defining $R\coloneqq \max\{d(v_i,p)\mid v_i \in \A \}$,  if $d(p_k,p)\leq \varepsilon R$,   we say that $p_k$ is an $\varepsilon$-solution, and the algorithm stops, stating $p \in \convA$. Otherwise, the next iterate $p_{k+1}$ is the closest point to $p$ on the line segment $[p_k, v]$. 
If there exists no pivot at $p_k \in \convA$, that is, when $d(v_j,p)> d(v_j,p_k), \forall j=1,\dots, n$, we say that $p_k$ is a \emph{witness} that $p \notin \convA$. In this case, the orthogonal bisecting hyperplane to the line segment $[p,p_k]$ separates $p$ from $\convA$. 
This iterative scheme is summarized in Algorithm~\ref{alg:ag}.

\begin{algorithm}\small
	\DontPrintSemicolon
	\SetAlgoLined
		\KwData{${\cal A}, p, R, \varepsilon \in (0,1)$} 
		Choose $p_0 \in \argmin \{ d(v_j,p)\mid v_j \in \A \}$ 	\label{step:agchoisep0}  \;

		\For{$k= 0,1,2, \ldots$}{
			\lIf{ $d(p_k,p)< \varepsilon R$}{stop}  \label{step:agcriteriodeparada} 
			\lIf{ $d(v_j,p)> d(v_j,p_k), \forall j=1,\dots, n$}{stop: $p_k$ is a $p$-witness.\label{step:agwitness}}
			Choose $v_j \neq p_k$, such that $d(v_j,p) \leq d(v_j,p_k)$. \label{step:agpivo} \;
		Set $\bar{\gamma}_k \in \argmin \{d(p, (1-\gamma)p_k + \gamma v_j )\mid  {0 \le \gamma \le 1}\}$. 
		\label{step:agbaralpha} \\
		 $p_{k+1} \gets  (1-\bar{\gamma}_k)p_k + \bar{\gamma}_k v_j$. 
		}
	\caption{\textsc{Triangle Algorithm (TA)}} 	\label{alg:ag}
\end{algorithm}

\begin{remark}
\begin{listi}
\item  In step~\ref{step:agchoisep0} of Algorithm~\ref{alg:ag} we could have chosen any $p' \in \convA $ as $p_0$, however starting with a point of $\mathcal{A}$ closest to $p$ has technical reasons that will become clear ahead. 
\item If a $p$-witness is detected at step~\ref{step:agwitness}, according to Theorem~\ref{teo:dd}, we have  that $p \notin \convA$.

\item  If the Triangle Algorithm stops at step~\ref{step:agcriteriodeparada}, we have a point $p_k \in \convA$ whose distance is less than $\varepsilon R$ from $p$, thus we classify $p$ as an element of $\convA$.
\end{listi}
\end{remark}

The next lemma will be useful in analyzing the variations of the Triangle Algorithm. It features equivalent characterizations for a pivot that follow from simple algebraic manipulation. 
\begin{lema}\label{lema:pivo}
	Let $p_k \in \convA$, $v_j \in \mathcal{A}$ and $p \in \Rm$ be given. The following are equivalent:
	\begin{listi}
		\item $\norm{v_j-p} \leq \norm{v_j-p_k};$
		\item $2v_j^T(p_k-p) \leq \norm{p_k}^2- \norm{p}^2;$
		\item $(p_k-p)^T(v_j-p_k) \leq -\frac{1}{2}\norm{p_k-p}^2$;
		\item $(p_k-p)^T(v_j-p) \leq \frac{1}{2}\norm{p_k-p}^2.$
	\end{listi}
\end{lema}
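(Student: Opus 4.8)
The plan is to verify all four equivalences by elementary algebra. Note first that none of the statements (i)--(iv) actually uses the hypotheses $p_k \in \convA$ or $v_j \in \mathcal A$: the equivalences hold for arbitrary vectors $p_k, v_j, p \in \Rm$, so the proof is purely a matter of rewriting inner products and norms. I would organize the argument as a cycle $\text{(i)}\iff\text{(ii)}\iff\text{(iv)}\iff\text{(iii)}$, establishing each link by a one-line expansion.

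For $\text{(i)}\iff\text{(ii)}$: since $\norm{v_j-p}\ge 0$ and $\norm{v_j-p_k}\ge 0$, condition (i) is equivalent to $\norm{v_j-p}^2 \le \norm{v_j-p_k}^2$; expanding each square through $\norm{a-b}^2 = \norm{a}^2 - 2a^Tb + \norm{b}^2$ and cancelling the common term $\norm{v_j}^2$ from both sides yields exactly $2v_j^T(p_k-p) \le \norm{p_k}^2-\norm{p}^2$, which is (ii).

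For $\text{(iii)}\iff\text{(iv)}$: use the decomposition $v_j - p_k = (v_j-p)-(p_k-p)$, which gives the identity $(p_k-p)^T(v_j-p_k) = (p_k-p)^T(v_j-p) - \norm{p_k-p}^2$; substituting this into (iii) and moving the term $-\norm{p_k-p}^2$ to the right-hand side turns (iii) into (iv). For $\text{(ii)}\iff\text{(iv)}$: expand $(p_k-p)^T(v_j-p)$ and $\frac12\norm{p_k-p}^2 = \frac12(\norm{p_k}^2 - 2p_k^Tp + \norm{p}^2)$, and observe that the cross term $p^Tp_k$ occurs on both sides and cancels, leaving $v_j^T(p_k-p) \le \tfrac12(\norm{p_k}^2-\norm{p}^2)$, i.e.\ (ii). Chaining the three two-way implications closes the cycle and proves the lemma.

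There is no real obstacle here; the calculation is routine. The only point deserving a word of care is the first link, where passing from (i) to its squared form $\norm{v_j-p}^2\le\norm{v_j-p_k}^2$ is an \emph{equivalence} precisely because both sides are nonnegative — otherwise squaring would only give one implication.
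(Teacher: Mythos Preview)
Your argument is correct; each of the three links is a valid one-line algebraic identity, and your remark about squaring being an equivalence because both norms are nonnegative is exactly the one subtlety worth flagging. The paper itself does not spell out a proof of this lemma at all --- it simply states that the equivalences ``follow from simple algebraic manipulation'' --- so your write-up is in the same spirit but supplies the details the paper omits.
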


Using this lemma we can establish the following auxiliary result. 
\begin{lema}\label{lem:reduce}
In every iteration $k$ of Algorithm~\ref{alg:ag}, if  $d(p_k,p) > \varepsilon R$ and if there exists a pivot $v_j$, then $\bar \gamma_k$ from step \ref{step:agbaralpha} has a closed formula given by 
\begin{equation}\label{eq:alpha}
\bar{\gamma}_k = -\frac{ (p_k -p)^T(v_j-p_k)} {\norm{v_j-p_k}^2} \in (0,1].
\end{equation}
Moreover, $d(p_{k+1},p) < d(p_k,p)$.
\end{lema}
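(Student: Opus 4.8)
The plan is to read step~\ref{step:agbaralpha} as an exact line search of the one–dimensional convex quadratic
\[
\phi(\gamma)\coloneqq d\bigl(p,(1-\gamma)p_k+\gamma v_j\bigr)^2
=\norm{p_k-p}^2+2\gamma\,(p_k-p)^T(v_j-p_k)+\gamma^2\norm{v_j-p_k}^2 ,
\]
over $\gamma\in[0,1]$. Since $v_j\neq p_k$ by step~\ref{step:agpivo}, the leading coefficient $\norm{v_j-p_k}^2$ is strictly positive, so $\phi$ is a strictly convex parabola whose unconstrained minimizer is $\gamma^\star=-(p_k-p)^T(v_j-p_k)/\norm{v_j-p_k}^2$. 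It therefore suffices to prove $\gamma^\star\in(0,1]$: once this is established, the constrained minimizer of $\phi$ over $[0,1]$ equals $\gamma^\star$, which is exactly the closed form claimed for $\bar\gamma_k$.

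The bound $\gamma^\star>0$ is the easy half: since $v_j$ is a pivot at $p_k$, Lemma~\ref{lema:pivo}(iii) gives $(p_k-p)^T(v_j-p_k)\leq-\tfrac12\norm{p_k-p}^2$, and this quantity is strictly negative because $d(p_k,p)>\varepsilon R>0$; hence $\gamma^\star>0$. The bound $\gamma^\star\leq1$ is the main obstacle, and it is precisely where the choice of $p_0$ in step~\ref{step:agchoisep0} is used (the ``technical reason'' mentioned in the Remark). I would first prove, by induction on $k$, the invariant $d(p_k,p)\leq d(v_i,p)$ for all $v_i\in\A$: it holds at $k=0$ because $p_0$ is a nearest point of $\A$ to $p$, and since $p_k$ is the endpoint $\gamma=0$ of the segment $[p_k,v_j]$ while $p_{k+1}$ is, by step~\ref{step:agbaralpha}, the closest point of that segment to $p$, we get $d(p_{k+1},p)\leq d(p_k,p)$, which carries the invariant forward. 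Applying the invariant to the current pivot $v_j$ and combining it with the pivot inequality $\norm{v_j-p}\leq\norm{v_j-p_k}$ yields $\norm{p_k-p}\leq\norm{v_j-p_k}$. Then Cauchy--Schwarz gives
\[
\gamma^\star=\frac{(p_k-p)^T(p_k-v_j)}{\norm{v_j-p_k}^2}
\leq\frac{\norm{p_k-p}\,\norm{p_k-v_j}}{\norm{v_j-p_k}^2}
=\frac{\norm{p_k-p}}{\norm{v_j-p_k}}\leq1 ,
\]
so $\bar\gamma_k=\gamma^\star$, which is \eqref{eq:alpha}.

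For the strict decrease, substituting $\gamma^\star$ into $\phi$ gives
\[
d(p_{k+1},p)^2=\phi(\gamma^\star)=\norm{p_k-p}^2-\frac{\bigl[(p_k-p)^T(v_j-p_k)\bigr]^2}{\norm{v_j-p_k}^2},
\]
and the subtracted term is strictly positive by the same estimate used to show $\gamma^\star>0$ (the inner product is nonzero); taking square roots yields $d(p_{k+1},p)<d(p_k,p)$. In summary, everything reduces to elementary quadratic calculus except the invariant $d(p_k,p)\leq\min_i d(v_i,p)$, which is the only place where some care — and the specific initialization of the algorithm — is required.
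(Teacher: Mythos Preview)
Your proof is correct and follows essentially the same approach as the paper's: both identify $\bar\gamma_k$ as the unconstrained minimizer of the strictly convex quadratic $\phi$, use Lemma~\ref{lema:pivo}(iii) together with $d(p_k,p)>\varepsilon R$ for positivity, and combine Cauchy--Schwarz, the pivot inequality $\norm{v_j-p}\leq\norm{v_j-p_k}$, and the invariant $d(p_k,p)\leq\min_i d(v_i,p)$ (seeded by the initialization in step~\ref{step:agchoisep0} and maintained by monotone decrease) to obtain $\bar\gamma_k\leq1$. The only cosmetic difference is that you establish the inductive invariant via the weak inequality $d(p_{k+1},p)\leq d(p_k,p)$ (which follows directly from the definition of $p_{k+1}$ as a constrained minimizer) and then derive the strict inequality separately from the explicit value $\phi(\gamma^\star)$, whereas the paper folds the strict decrease into the inductive argument; both orderings are sound.
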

\begin{proof}
It follows from simple calculations that the unconstrained minimizer of the convex quadratic function   $\phi(\gamma) \coloneqq d(p, (1-\gamma)p_k + \gamma v_j )^2$ required in step~\ref{step:agbaralpha} is given by  \eqref{eq:alpha}. Let us show that $\bar{\gamma}_k \in (0,1]$. From Lemma~\ref{lema:pivo}(iii), we have $$
\bar{\gamma}_k \geq \dfrac{1}{2}\dfrac{\| p_k - p\|^2}{\|v_j - p_k\|^2} > 0,
$$
where the last inequality follows from the hypothesis $d(p_k,p) > \varepsilon R >0$. On the other hand, from Cauchy-Schwarz inequality, we obtain
$$
\bar{\gamma}_k \leq \dfrac{\| p_k - p \|}{\| v_j - p_k \|} \leq \dfrac{\| p_k - p \|}{\| v_j - p \|} \leq 1,
$$
where the second inequality follows from step~\ref{step:agpivo}  and Lemma~\ref{lema:pivo}(i). The third inequality is ensured in the first iteration by step~\ref{step:agchoisep0}. Its validity in the subsequent iterations comes from $d(p_{k+1},p) < d(p_k,p)$ that we prove now. Note that 
\begin{align}
    d(p_{k+1},p)^2 & = \| p_{k+1} - p \|^2 =  \| (1-\bar{\gamma}_k)p_k + \bar{\gamma}_k v_j  - p \|^2 = \| \bar{\gamma}_k (v_j - p_k) + (p_k - p)\|^2  \\
    & = \bar{\gamma}_k^2 \| v_j - p_k \|^2 + \bar{\gamma}_k 2 (p_k-p)^T (v_j - p_k) + \| p_k - p\|^2 < \| p_k - p\|^2 = d(p_k,p)^2, \label{eq:alphaquad} 
\end{align}
where the last inequality follows from the fact that $\bar{\gamma}_k>0$ from \eqref{eq:alpha} is the strict unconstrained minimizer of the quadratic in \eqref{eq:alphaquad}. Therefore, $d(p_{k+1},p) < d(p_k,p)$ and $0 < \bar{\gamma}_k \leq 1$ in every iteration such that $d(p_k,p) > \varepsilon R$ and a pivot $v_j$ exists.
\end{proof}

As a consequence of Lemma~\ref{lem:reduce}, we have that the sequence of distances given by  $\delta_k\coloneqq d(p_k,p)$ is monotonically decreasing. We can indeed quantify such reduction by observing  that the inequalities  $d(p_k,p)\leq d(v,p)\leq d(v,p_k)$ imply that the points  $p_k,p$ and $v$ are non-collinear and thus result in a non-degenerate triangle, as depicted in Figure~\ref{fig:piorsimples-a}, where the angle  $\theta_k \coloneqq \angle pp_kv \in [0,\pi/2)$. Hence, 
we have 
\begin{equation}\label{eq:gapred}
\norm{p_{k+1}-p}^2=(1-\cos^2\theta_k)\norm{p_k-p}^2. 
\end{equation} 
 Now, in view of the pivot characterization (Lemma~\ref{lema:pivo}), we can see that if $\norm{v - p}= r$, $\cos \theta_k$  is minimum when $v$ is over the hyperplane $H \coloneqq \{ y \in \Rm \ \mid \ (p - p_k)^T y = (\| p \|^2 -\| p_k\|^2)/2 \}$.	In this case, $\cos \theta_k = \delta_k / 2r$; see an illustration in Figure~\ref{fig:piorsimples-b}. 

\begin{figure*}
	\centering
  \begin{subfigure}[t]{.48\linewidth}
    \centering
    \includegraphics[width=.8\textwidth]{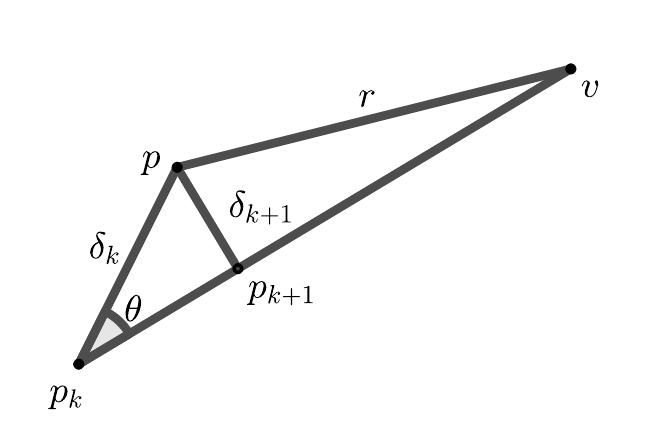}
    \caption{\label{fig:piorsimples-a} Iteration of Triangle Algorithm.}
  \end{subfigure}
  \begin{subfigure}[t]{.48\linewidth}
    \centering
    \includegraphics[width=.8\textwidth]{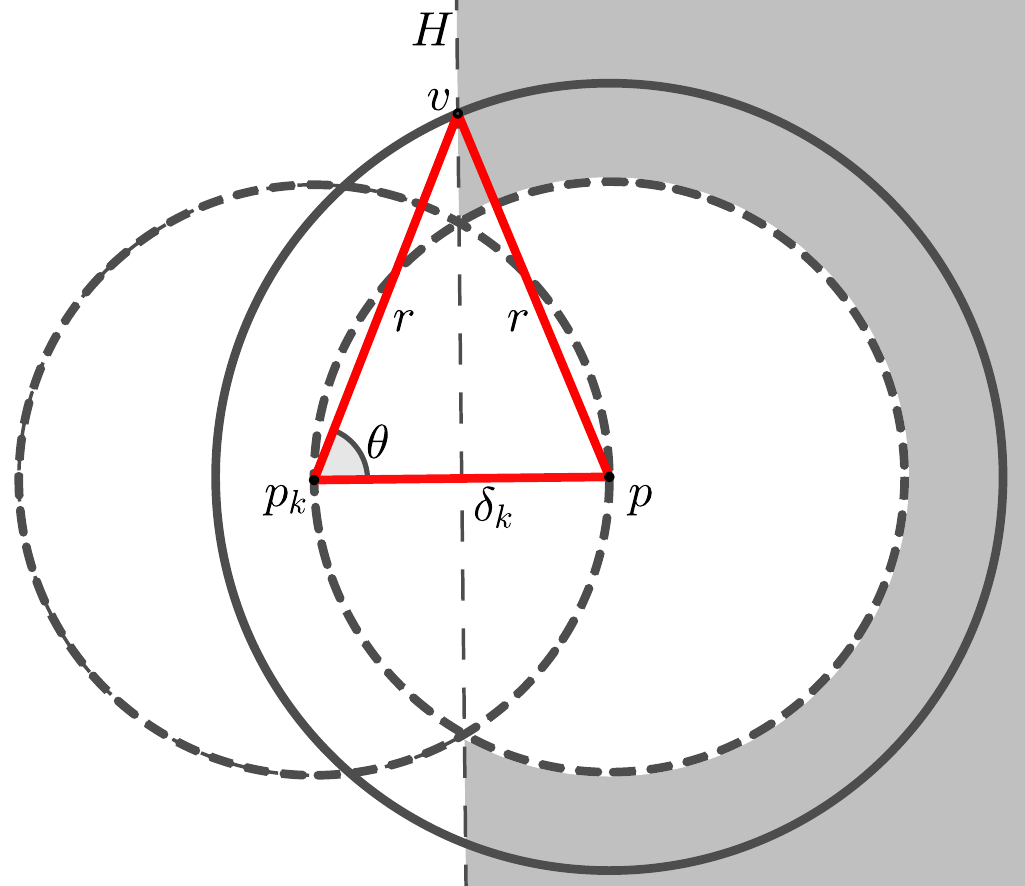}
    \caption{\label{fig:piorsimples-b}Worst case for $\cos \theta$ when $v$ is a pivot. The shaded area represents the region where a pivot can be found.}
  \end{subfigure} 
	\caption{Motivation for the proof of Theorem~\ref{lem:piorcaso}.}	\label{fig:piorsimples}
\end{figure*}

From these remarks and from   \eqref{eq:gapred} we have the next theorem.

\begin{theorem}[{\citet[Theorem 8]{Kalantari}}]
\label{lem:piorcaso}
	Let $p,p',v$ be distinct points in $\mathbb{R}^m$  and suppose that $d(v,p) \leq d(v,p')$. 
	Let $p''$ be the point in the segment $[p',v]$ that is closest to $p$. 
	Define $\delta\coloneqq  d(p',p)$,  $\delta'\coloneqq d(p'',p)$, $r\coloneqq d(v,p)$ and assume  $\delta \leq r$. 
	Then,
	\[\label{eq:piorcaso}
	\delta ' \leq \delta\sqrt{1- \frac{\delta^2}{4r^2}}.
	\]
\end{theorem}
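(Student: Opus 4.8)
The plan is to reduce the statement to the computation already carried out in Lemma~\ref{lem:reduce} and the geometric observation encoded in~\eqref{eq:gapred}. First I would set up the triangle with vertices $p', v$ and $p$; since $d(v,p)\le d(v,p')$ and, implicitly, $\delta=d(p',p)\le r=d(v,p)$, the three points are non-collinear (none of the two ``$\le$'' can be forced to an equality that collapses the triangle), so the angle $\theta\coloneqq\angle pp'v$ lies in $[0,\pi/2)$ and $\cos\theta>0$. Writing $p''=(1-\bar\gamma)p'+\bar\gamma v$ for the minimizer of $\gamma\mapsto d(p,(1-\gamma)p'+\gamma v)$ over $[0,1]$, the same elementary computation as in Lemma~\ref{lem:reduce} (with $p_k\leftarrow p'$, $v_j\leftarrow v$, $p_{k+1}\leftarrow p''$) gives $\bar\gamma=-(p'-p)^T(v-p')/\norm{v-p'}^2\in(0,1]$ and, substituting back,
\begin{equation*}
\delta'^2=\norm{p''-p}^2=\bigl(1-\cos^2\theta\bigr)\,\delta^2,
\end{equation*}
which is exactly~\eqref{eq:gapred} in the present notation. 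So the theorem is equivalent to the inequality $\cos^2\theta\ge \delta^2/(4r^2)$, i.e. $\cos\theta\ge \delta/(2r)$.

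Next I would prove that lower bound on $\cos\theta$. By definition $\cos\theta=(p-p')^T(v-p')/(\delta\,\norm{v-p'})$, and by Lemma~\ref{lema:pivo} (applied with $p_k\leftarrow p'$, $v_j\leftarrow v$), the pivot hypothesis $d(v,p)\le d(v,p')$ is equivalent to $(p'-p)^T(v-p')\le-\tfrac12\norm{p'-p}^2$, that is $(p-p')^T(v-p')\ge\tfrac12\delta^2$. Hence
\begin{equation*}
\cos\theta=\frac{(p-p')^T(v-p')}{\delta\,\norm{v-p'}}\ge\frac{\delta}{2\,\norm{v-p'}}.
\end{equation*}
It remains to bound $\norm{v-p'}$ from above by $2r$: by the triangle inequality $\norm{v-p'}\le\norm{v-p}+\norm{p-p'}=r+\delta\le 2r$, using the hypothesis $\delta\le r$. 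Combining the last two displays yields $\cos\theta\ge\delta/(2r)$, hence $\cos^2\theta\ge\delta^2/(4r^2)$, and substituting into the identity for $\delta'^2$ gives $\delta'^2\le\delta^2\bigl(1-\delta^2/(4r^2)\bigr)$; taking square roots finishes the proof.

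The only mildly delicate point is making sure every step is geometrically legitimate: that the triangle is non-degenerate so that $\cos\theta$ is well defined and lies in $[0,\pi/2)$, that $\bar\gamma$ really is the constrained minimizer on $[0,1]$ and not a boundary point (this is precisely what $\bar\gamma\in(0,1]$ from Lemma~\ref{lem:reduce}'s argument guarantees, and it is where the hypothesis $\delta\le r$ is used a second time, via Cauchy--Schwarz), and that the chain of estimates does not silently assume $p\ne p''$. None of these requires more than the bookkeeping already present in Lemma~\ref{lem:reduce}; I expect the ``hard'' part to be purely expository, namely stating the degeneracy discussion cleanly rather than any genuine computational obstacle. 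An alternative, even shorter route is to quote~\eqref{eq:gapred} directly and then only establish $\cos\theta\ge\delta/(2r)$ as above, but rederiving $\delta'^2=(1-\cos^2\theta)\delta^2$ from scratch keeps the proof self-contained.
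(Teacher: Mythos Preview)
Your reduction to showing $\cos\theta\ge\delta/(2r)$ is the right one and matches the paper's discussion preceding the theorem. But the final combination contains an arithmetic slip: from $\cos\theta\ge\delta/(2\|v-p'\|)$ and $\|v-p'\|\le 2r$ you only obtain $\cos\theta\ge\delta/(4r)$, not $\delta/(2r)$. Plugged into \eqref{eq:gapred} this yields $\delta'\le\delta\sqrt{1-\delta^2/(16r^2)}$, which is strictly weaker than \eqref{eq:piorcaso}.

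The loss is inherent to bounding the numerator and denominator of $\cos\theta$ independently. Your numerator bound $(p-p')^T(v-p')\ge\delta^2/2$ is tight exactly when $v$ lies on the bisecting hyperplane $H$ (equality in the pivot condition), and at that configuration $\|v-p'\|=\|v-p\|=r$, not $2r$; conversely, your triangle-inequality bound $\|v-p'\|\le r+\delta$ is tight only when $p$ lies on the segment $[p',v]$, and then the numerator equals $\delta(r+\delta)$, far above $\delta^2/2$. The two extremes never coincide, so decoupling costs exactly a factor of~$2$.

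The paper's route (the paragraph ending at Figure~\ref{fig:piorsimples-b}) avoids this by a direct worst-case argument: among all pivots $v$ with $\|v-p\|=r$, the minimum of $\cos\theta$ is attained precisely when $v\in H$, and there $\cos\theta=\delta/(2r)$. An equivalent algebraic fix that stays close to your computation is to expand $\|v-p'\|^2=\|(v-p)+(p-p')\|^2$ to get the identity $\|v-p'\|^2=r^2-\delta^2+2s$ with $s\coloneqq(p-p')^T(v-p')$, so that
\[
\cos^2\theta=\frac{s^2}{\delta^2\bigl(r^2-\delta^2+2s\bigr)}.
\]
The right-hand side is increasing in $s$ for $s\ge\delta^2/2$ (its derivative has the sign of $s(r^2-\delta^2+s)>0$), hence its minimum over the pivot constraint $s\ge\delta^2/2$ is $(\delta^2/2)^2/(\delta^2 r^2)=\delta^2/(4r^2)$, which is what you need.
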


{\begin{remark}\label{remark:complexpivosimples}
	Regarding Algorithm~\ref{alg:ag}, we have 
	\[\label{complexidadeag}
	\delta_{k+1} \leq \delta_k \sqrt{1- \frac{\delta_k^2}{4R^2}} \leq \delta_k \exp\left(-\frac{\delta_k^2}{8R^2}\right),
	\]
	 where the last inequality follows from $1+x \leq \exp(x)$; recall that $R=\max\{d(v_j,p) \mid v_j \in \A\}$.
	We also observe that, as long as $\delta_k > \varepsilon R$, it holds that $\exp\left(-\frac{\delta_k^2}{8R^2}\right) < \exp\left(-\frac{\varepsilon^2}{8}\right) < 1$.
\end{remark}}

Now, we can formally aggregate the complexity bound of  Algorithm~\ref{alg:ag} in the next result. 

	\begin{theorem}[Complexity of TA~{\cite[Theorem~9]{Kalantari}}]\label{teo:complex1}
	Algorithm~\ref{alg:ag}  correctly solves the convex hull membership problem \eqref{eq:chmp} with the following complexity:
		\begin{listi}
			\item If $p\in \convA$, given $\varepsilon>0, p_0\in \convA$, with $\delta_0= d(p,p_0) \leq  \min{ \{d(v_i,p) \mid i=1, \ldots, n\}}$,  
			the maximum number of iterations $K_{\varepsilon}$ to compute a point $p_{\varepsilon}\in \convA$ such that $d(p_{\varepsilon},p)< \varepsilon R$ satisfies
			\begin{equation}\label{eq:complexeps2}
			K_{\varepsilon} \leq \frac{48}{\varepsilon^2} = O(\varepsilon^{-2}).
			\end{equation}
			\item If $p\notin \convA$ the number of iterations $K_{\Delta}$ to compute a
			$p-$witness is such that  
			\begin{eqnarray}\label{eq:geometriaproble}
			K_{\Delta} \leq \frac{48R^2}{\Delta^2} = O\left(\frac{R^2}{\Delta^2}\right),
			\end{eqnarray}
			where  $\Delta = \min\{d(x,p) \mid x\in \convA\}$.
		\end{listi}
	\end{theorem}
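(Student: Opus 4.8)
The plan is to derive both complexity bounds from the per-iteration contraction in Remark~\ref{remark:complexpivosimples}, treating the two cases separately according to what quantity provides a uniform lower bound on $\delta_k$. In case (i), as long as the algorithm has not stopped we have $\delta_k > \varepsilon R$, so Remark~\ref{remark:complexpivosimples} gives $\delta_{k+1} \leq \delta_k \exp(-\varepsilon^2/8)$; in case (ii) the distance duality theorem (Theorem~\ref{teo:dd}) guarantees that while no witness has been detected, for each iterate $p_k$ there exists a pivot, so Lemma~\ref{lem:reduce} applies and moreover $\delta_k = d(p_k,p) \geq \Delta$ since $p_k \in \convA$; hence Remark~\ref{remark:complexpivosimples} yields $\delta_{k+1} \leq \delta_k \exp(-\Delta^2/(8R^2))$. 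In both cases we obtain geometric decay of $\delta_k$ with a fixed ratio.

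First I would handle case (i). Iterating the bound $\delta_{k+1} \leq \delta_k \exp(-\varepsilon^2/8)$ from $k=0$ gives $\delta_K \leq \delta_0 \exp(-K\varepsilon^2/8) \leq R \exp(-K\varepsilon^2/8)$, using $\delta_0 \leq \min_i d(v_i,p) \leq R$. The algorithm is guaranteed to have produced an $\varepsilon$-solution once $R\exp(-K\varepsilon^2/8) \leq \varepsilon R$, i.e. once $K \geq (8/\varepsilon^2)\ln(1/\varepsilon)$. To convert this into the clean bound $48/\varepsilon^2$ one uses the elementary estimate $\ln(1/\varepsilon) \leq 1/\varepsilon \leq$ (something), but the constant $48$ suggests a slightly different bookkeeping: one splits the iterations into phases where $\delta_k$ lies in dyadic ranges $(\varepsilon R\, 2^{j}, \varepsilon R\, 2^{j+1}]$ and bounds the number of iterations in each phase using the sharper form $\delta_{k+1}\leq \delta_k\sqrt{1-\delta_k^2/(4R^2)}$, summing a geometric series over $j$. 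I would follow Kalantari's original accounting to land on the explicit constant; the precise constant is not conceptually delicate, only the geometric-decay mechanism is.

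For case (ii), the same iteration gives $\delta_K \leq R\exp(-K\Delta^2/(8R^2))$, but now the termination is different: the algorithm stops at the first $k$ for which $p_k$ is a witness, and the point is to show this must happen within $48R^2/\Delta^2$ iterations. The argument is that if no witness had been found after too many iterations, $\delta_k$ would be driven strictly below $\Delta = \min\{d(x,p)\mid x \in \convA\}$, contradicting $p_k \in \convA$. Concretely, $\delta_K < \Delta$ forces a contradiction, and $R\exp(-K\Delta^2/(8R^2)) < \Delta$ as soon as $K > (8R^2/\Delta^2)\ln(R/\Delta)$; again the same dyadic-phase refinement upgrades the $\ln(R/\Delta)$ factor away and produces the constant $48$. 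One should also note that an $\varepsilon$-solution is never reached in this case for $\varepsilon$ small enough (indeed $\delta_k \geq \Delta$ always), so the stopping test at step~\ref{step:agcriteriodeparada} is irrelevant here and the algorithm correctly reports $p\notin\convA$ via step~\ref{step:agwitness}; combined with Theorem~\ref{teo:dd} this establishes correctness.

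The main obstacle is not the geometric-decay estimate — that is immediate from Remark~\ref{remark:complexpivosimples} — but rather pinning down the explicit constant $48$, which requires the dyadic-phase summation rather than a one-shot $\exp$ bound, since a naive bound only gives an extra logarithmic factor. Care is also needed in case (ii) to justify rigorously that the absence of a witness at every iterate (needed to keep applying Lemma~\ref{lem:reduce}) together with $p_k\in\convA$ yields the strict lower bound $\delta_k \geq \Delta$, and to confirm that the choice of $p_0$ in step~\ref{step:agchoisep0} indeed satisfies the hypothesis $\delta_0 \leq \min_i d(v_i,p)$ needed to start the induction in Lemma~\ref{lem:reduce}.
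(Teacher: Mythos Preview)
The paper does not give its own proof of this theorem: it is stated with a citation to \cite[Theorem~9]{Kalantari}, after the preparatory results (Lemma~\ref{lem:reduce}, Theorem~\ref{lem:piorcaso}, Remark~\ref{remark:complexpivosimples}) have been established. Your proposal builds on exactly those preparatory results, so in spirit it is aligned with what the paper sets up; there is simply no in-paper proof to compare against.

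Your sketch is sound. One remark on the accounting: the dyadic-phase argument you describe works, but there is a slicker route that avoids both the spurious $\ln(1/\varepsilon)$ factor and the phase bookkeeping. From Remark~\ref{remark:complexpivosimples} one has $\delta_{k+1}^2 \leq \delta_k^2\bigl(1 - \delta_k^2/(4R^2)\bigr)$; setting $a_k = \delta_k^2$ and using $1/(1-x)\geq 1+x$ gives the telescoping bound
\[
\frac{1}{a_{k+1}} \;\geq\; \frac{1}{a_k} + \frac{1}{4R^2},
\qquad\text{hence}\qquad
\delta_K^2 \leq \frac{4R^2}{K}.
\]
In case (i) this yields $\delta_K < \varepsilon R$ as soon as $K > 4/\varepsilon^2$; in case (ii), since $\delta_K \geq \Delta$ as long as no witness is found, one gets $K \leq 4R^2/\Delta^2$. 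This produces the $O(\varepsilon^{-2})$ and $O(R^2/\Delta^2)$ bounds directly, with a constant even smaller than $48$ (Kalantari's constant is not sharp). Your identification of the two lower bounds on $\delta_k$ (namely $\varepsilon R$ in (i) and $\Delta$ in (ii)), the role of Theorem~\ref{teo:dd} in guaranteeing a pivot in (ii), and the correctness discussion are all accurate.
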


We point out that when $p \not\in \convA$  Theorem~\ref{teo:complex1}(ii) states the iteration complexity of TA depending only on the geometry of the problem (namely, the constants $\Delta$ and $R$).
On the other hand, the $O(\varepsilon^{-2})$ iteration complexity in Theorem~\ref{teo:complex1}(i) implies that the convergence rate is only sublinear. In order to improve this complexity result, a more restrictive definition of pivot is presented next.
\begin{defi}[{Strict pivot \cite[Definition 8]{Kalantari}}]\label{def:pivoestrito}
	Let $p\in \Rm$ and $p_k \in \convA$. We say that $v\in \A$ is a \emph{strict pivot}, if  the angle $\angle p_kpv$  between the segments $[p_k,p]$ and $[p,v]$  is such that  $\angle p_kpv \geq \pi/2$.
\end{defi}
\begin{remark}\label{rem:strict}
Clearly, $v \in \A$ is strict pivot for $p_k$ whenever $(p_k - p)^T (v - p) \le 0$, which is a stronger condition than  the one in Lemma~\ref{lema:pivo}(iv).  
{We depict the geometric interpretation of a strict pivot in Figure~\ref{fig:pivoestrito}, where the  hyperplane $\bar{H} $ is defined as $\bar{H} \coloneqq  \{ y \in \Rm \mid (p - p_k)^T (y - p) = 0 \}$. Analogously to Remark~\ref{remark:complexpivosimples}, in the case of $v$ being a strict pivot, we have 
	\[
	\delta_{k+1} = \frac{\delta_k r}{\sqrt{r^2+\delta_k^2}} \leq \frac{\delta_k R}{\sqrt{R^2+\delta_k^2}} \leq \delta_k \sqrt{1- \frac{\delta_k^2}{2R^2}} \leq \delta_k \exp\left({-\frac{\delta_k^2}{4R^2}}\right). 
	\]}
\end{remark}

\begin{figure}
	\centering
	\includegraphics[scale=0.7]{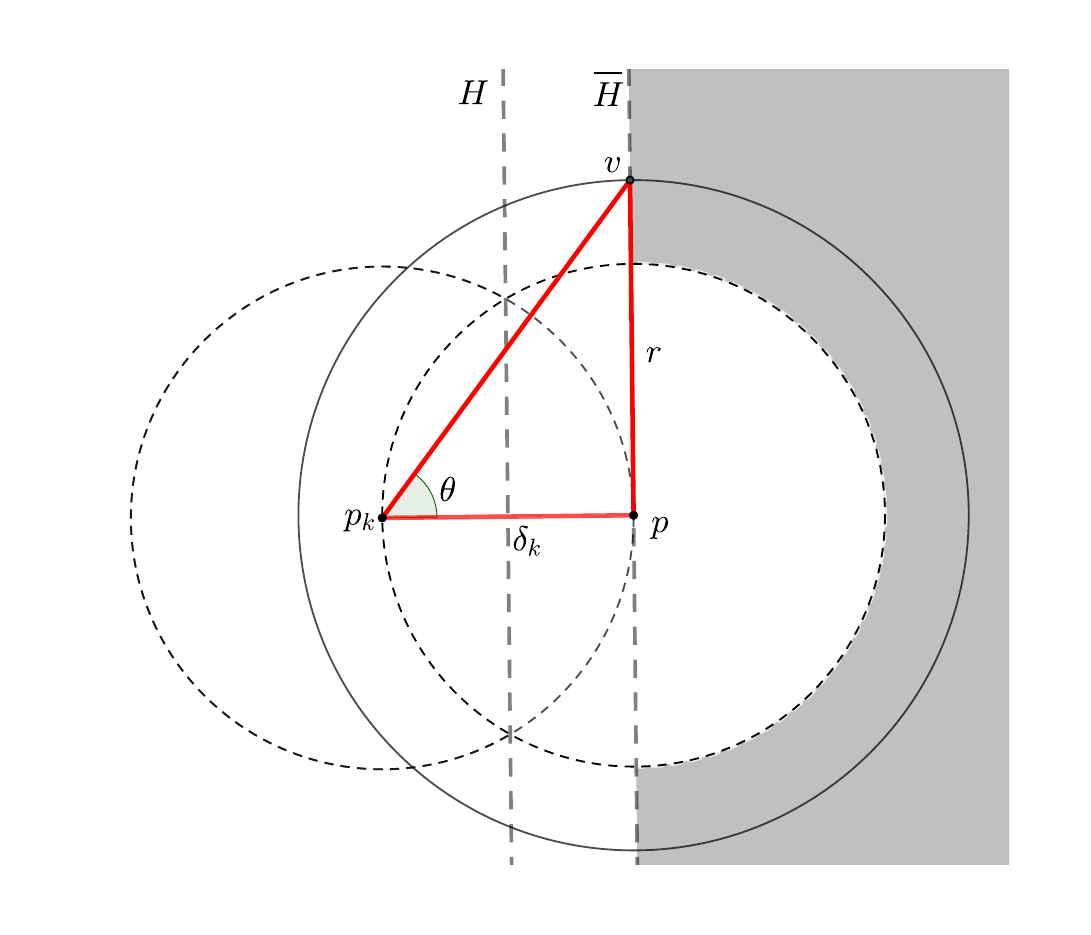}
	\caption{The shaded area represents the region where strict pivots can be found.}\label{fig:pivoestrito}
\end{figure}

Note that a distance duality theorem for strict pivots is also available and is presented in sequel.

\begin{theorem}[Distance duality for a strict pivot~{\cite[Theorem~10]{Kalantari}}]\label{teo:dual2} 
	Assume $p\notin \A$. Then, we have $p \in \convA$ if, and only if,	for each $p_k \in \convA$ there exists a strict pivot $v\in \A$.
\end{theorem}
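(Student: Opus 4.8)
The plan is to reduce the biconditional to the elementary reformulation, immediate from Definition~\ref{def:pivoestrito}, that $v\in\A$ is a strict pivot at $p_k$ if and only if $(p_k-p)^T(v-p)\le 0$: the angle $\angle p_kpv$ between $[p_k,p]$ and $[p,v]$ is at least $\pi/2$ exactly when the inner product of the vectors $p_k-p$ and $v-p$ is non-positive (cf.\ the ``only if'' half stated in Remark~\ref{rem:strict}). With this in hand I would prove the two implications separately. The degenerate case $p_k=p$ is trivial, since then $(p_k-p)^T(v-p)=0$ for every $v\in\A$, so I may assume $p_k\neq p$; the hypothesis $p\notin\A$ guarantees $v_i\neq p$ for all $i$, so all the angles involved are well defined.

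For the forward implication, assume $p\in\convA$ and write $p=\sum_{i=1}^n \lambda_i v_i$ with $\lambda\in\Delta_n$. Fix an arbitrary $p_k\in\convA$. Then
\[
\sum_{i=1}^n \lambda_i\,(p_k-p)^T(v_i-p)\;=\;(p_k-p)^T\Big(\sum_{i=1}^n\lambda_i v_i-p\Big)\;=\;(p_k-p)^T(p-p)\;=\;0 .
\]
Since the $\lambda_i$ are non-negative and sum to one, some index $i$ with $\lambda_i>0$ must satisfy $(p_k-p)^T(v_i-p)\le 0$ — otherwise the convex combination above would be strictly positive. That $v_i$ is the desired strict pivot at $p_k$.

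For the converse I would argue by contraposition: assuming $p\notin\convA$, I must exhibit a single $p_k\in\convA$ admitting no strict pivot. I would take $p_k:=p^{\star}$, the Euclidean projection of $p$ onto the non-empty compact convex set $\convA$, so that $p^{\star}\neq p$. By the first-order (variational) characterization of the projection onto a convex set, $(p-p^{\star})^T(y-p^{\star})\le 0$ for every $y\in\convA$; applying this with $y=v_i$ and decomposing $v_i-p=(v_i-p^{\star})+(p^{\star}-p)$ gives
\[
(p^{\star}-p)^T(v_i-p)\;=\;(p^{\star}-p)^T(v_i-p^{\star})+\|p^{\star}-p\|^2\;\ge\;\|p^{\star}-p\|^2\;>\;0
\]
for all $i=1,\dots,n$, so no $v_i$ is a strict pivot at $p^{\star}$. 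Combining the two implications yields the theorem.

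I do not anticipate a serious obstacle: the argument rests entirely on the two one-line facts above — a weighted average of reals that sums to zero has a non-positive summand, and the projection onto a convex set forms a non-acute angle with every point of that set — mirroring the structure of the proof of Theorem~\ref{teo:dd}. The only points demanding a modicum of care are confirming the ``if and only if'' form of the strict-pivot reformulation straight from the angle condition in Definition~\ref{def:pivoestrito}, and explicitly disposing of the degenerate situation $p_k=p$.
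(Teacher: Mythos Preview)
The paper does not include its own proof of this statement; it simply cites \cite[Theorem~10]{Kalantari}. Your argument is correct and self-contained: the forward direction via the convex-combination identity $\sum_i \lambda_i (p_k-p)^T(v_i-p)=0$ and the converse via the variational characterization of the projection onto $\convA$ are exactly the right tools, and together with the inner-product reformulation of Definition~\ref{def:pivoestrito} they dispatch both implications cleanly. Your handling of the degenerate case $p_k=p$ and of the role of the hypothesis $p\notin\A$ is also appropriate.
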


Similarly to Theorem~\ref{teo:complex1}, the Triangle algorithm using strict pivots also requires  $\mathcal{O}(1/\varepsilon^2)$ iterations to find an $\varepsilon$-solution (albeit the  multiplicative constant is smaller; see \citet[Theorem~11]{Kalantari}), whenever $p$ belongs to $\convA$. Nonetheless, strict pivots shall be useful ahead to showcase a situation where the convergence rate is linear. First, note from \eqref{eq:gapred} that the reduction in $\delta_k = d(p_k, p)$, at each iteration of the Triangle Algorithm, depends on the angle $\theta_k = \angle pp_kv$ (see Figure \ref{fig:piorsimples}). Thus, 
if $\cos \theta_k$ stays bounded away from zero, it is possible to improve the complexity results for TA. 
Indeed, we formalize this fact, which was established in \cite{Kalantari},  with the aid of the next assumption.

\begin{assump}\label{assump:cfatorvisibi}
    There exists a constant $c> 0$ (called \emph{visibility factor})  such that  the following inequality is fulfilled
	\begin{equation}\label{eq:hip1}
		\sin \theta_k \le  \dfrac{1}{\sqrt{1+c}}, \quad \forall p_k \in \convS, 
		\end{equation}
	where $\theta_k = \angle pp_kv$ and $v$ is a  pivot at $p_k$.
\end{assump}

\begin{theorem}[{\citet[Theorem~13]{Kalantari}}]
\label{teo:complexalternative}
	Let $\delta_0= d(p_0,p),$ $p_0 \in \convS$ and assume that Assumption \ref{assump:cfatorvisibi} holds.
\begin{listi}
			\item If $p\in \convA$, then the number of iterations of the Triangle Algorithm to obtain $p_{\varepsilon} \in \convA$ such that $d(p_{\varepsilon},p) \le \varepsilon R$ is 
	\[
	\mathcal{O}\left( \frac{1}{c} \ln{\frac{\delta_0}{\varepsilon R}} \right).
	\]
	\item If $p\notin \convA$ the number of iterations of the Triangle Algorithm to obtain a $p$-witness is
	\[
	\mathcal{O}\left( \frac{1}{c} \ln{\frac{\delta_0}{\Delta}}\right).
	\]
	\end{listi}
\end{theorem}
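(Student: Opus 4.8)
The plan is to upgrade the one-step identity~\eqref{eq:gapred} into a genuine linear (geometric) decay of $\delta_k\coloneqq d(p_k,p)$ by invoking Assumption~\ref{assump:cfatorvisibi}, and then to read off the iteration count in each of the two cases. First I would record that at every iteration $k$ which does not trigger a stop there exists a pivot $v$ at $p_k$: for~(i) this follows from the distance duality Theorem~\ref{teo:dd}, since $p\in\convA$; for~(ii) it holds precisely as long as no $p$-witness has yet been found. Whenever such a pivot exists and $\delta_k>\varepsilon R$, Lemma~\ref{lema:pivo}(iii) yields $(p-p_k)^T(v-p_k)\ge\frac12\|p_k-p\|^2>0$, so $\cos\theta_k>0$, and Lemma~\ref{lem:reduce} together with the choice of $p_0$ in step~\ref{step:agchoisep0} yields $\delta_k\le d(v,p)$; hence $\theta_k\coloneqq\angle pp_kv\in[0,\pi/2)$ and~\eqref{eq:gapred} legitimately applies, giving $\delta_{k+1}^2=(1-\cos^2\theta_k)\delta_k^2=\sin^2\theta_k\,\delta_k^2$.

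Next I would insert Assumption~\ref{assump:cfatorvisibi}: since $\sin^2\theta_k\le 1/(1+c)$ we obtain $\delta_{k+1}^2\le\frac{1}{1+c}\,\delta_k^2$, hence by induction $\delta_k^2\le(1+c)^{-k}\delta_0^2$, that is, $\delta_k\le(1+c)^{-k/2}\delta_0$. Using $1+x\le\exp(x)$ in the form $\frac{1}{1+c}\le\exp\left(-\frac{c}{1+c}\right)$, this can be rewritten as $\delta_k\le\exp\left(-\frac{kc}{2(1+c)}\right)\delta_0$, which is the linear rate we are after.

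For part~(i), I would then observe that the stopping test at step~\ref{step:agcriteriodeparada} is satisfied as soon as $\delta_K\le\varepsilon R$, and by the bound above it suffices that $K\ge\frac{2(1+c)}{c}\ln\frac{\delta_0}{\varepsilon R}$, so the number of iterations is $\mathcal{O}\left(\frac{1}{c}\ln\frac{\delta_0}{\varepsilon R}\right)$. For part~(ii), I would invoke the a priori lower bound $\delta_k\ge\Delta$, valid at every iteration because $p_k\in\convA$ and $\Delta=\min\{d(x,p)\mid x\in\convA\}$; combining it with $\delta_k\le\exp\left(-\frac{kc}{2(1+c)}\right)\delta_0$ — which remains valid as long as no $p$-witness has been detected — forces $\Delta\le\exp\left(-\frac{kc}{2(1+c)}\right)\delta_0$, i.e.\ $k\le\frac{2(1+c)}{c}\ln\frac{\delta_0}{\Delta}$. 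Consequently a $p$-witness must be produced within $\mathcal{O}\left(\frac{1}{c}\ln\frac{\delta_0}{\Delta}\right)$ iterations.

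The whole argument is essentially a geometric-series estimate, so no single step is a real obstacle; the points needing care are bookkeeping rather than substance. They are: (a) verifying that~\eqref{eq:gapred} is genuinely available at each ``live'' iteration (distinctness of $p$, $p_k$, $v$; positivity of $\cos\theta_k$; and $\delta_k\le d(v,p)$), all of which are already contained in Lemma~\ref{lema:pivo} and the proof of Lemma~\ref{lem:reduce}; and (b) in case~(ii), noticing that the geometric decay by itself only tells us that $\delta_k$ shrinks, so one genuinely needs the lower bound $\delta_k\ge\Delta$ to conclude that the witness test must eventually succeed and thereby to cap the number of iterations.
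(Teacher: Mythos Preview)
Your argument is correct. The paper itself does not supply a proof of this theorem --- it is stated with attribution to \citet[Theorem~13]{Kalantari} and no proof is given --- so there is nothing to compare against; your derivation via the one-step contraction $\delta_{k+1}^2=\sin^2\theta_k\,\delta_k^2\le(1+c)^{-1}\delta_k^2$ obtained from~\eqref{eq:gapred} and Assumption~\ref{assump:cfatorvisibi}, followed by inversion of the resulting geometric decay, is exactly the natural route and your bookkeeping in both cases is sound.
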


Iteration complexities of Theorem~\ref{teo:complexalternative} are improvements in comparison with those of Theorem~\ref{teo:complex1}, as long as the visibility factor $c$ is bounded away from zero. As we shall see ahead, this is the case, for example, when $p$ belongs to the relative interior of $\convA$. Nevertheless, in other cases,  $c$ can be very close to $0$ and, as a consequence, the complexities of Theorem~\ref{teo:complexalternative} may be way worse than those of  Theorem~\ref{teo:complex1}. When this happens, TA may experience a zigzagging phenomenon as illustrated by the following example. 
\begin{example}\label{exe:quadrado}
Consider $\A = \{ v_0,v_1,v_2,v_3,v_4\} \subset \R^2$ where $\{v_0, v_1, v_2, v_3\}$ are vertices of the unit square and $v_4$ is an interior point. Assume that the point of $\A$  closest to the query point $p$ is $v_4$ (which was generated by shifting the center of the square a bit to the right). We analyze two instances depicted in Figure~\ref{fig:example-unitsquare}. In Figure~\ref{fig:example-unitsquare-a}, we have the case where the query point $p$ is the midpoint of an edge of $\convA$ whereas Figure~\ref{fig:example-unitsquare-b} shows the case where $p$  is moved to a position outside the square. In both cases,  the zigzagging behavior of TA shows up. We only plot the first $60$ iterations of TA, in each picture: it takes more than a million iterations to achieve $\|p_k - p \| \leq \varepsilon R$, with $\varepsilon=10^{-4}$ in case (a) and 81 iterations to find a witness in case (b). For $p\in \convA$, we have $\delta_0= 0.40$ and $R = 1.12$. 
We monitored the sequence $(\sin \theta_k)_{k\in \N}$ based on TA iterations in order to estimate an upper bound for \eqref{eq:hip1}. We notice that the values approach $1$ as $k$ grows; thus, for the points we computed, if the visibility factor $c$ exists, it is not greater  than $10^{-7}$. When $p\notin \convA$, we get $\delta_0= 0.45$ and $\Delta = 0.05$. Again, tracking $(\sin \theta_k)_{k\in N}$ allowed us to provide an upper bound for the visibility constant $c$ of $0.0044$. 
\end{example}

\begin{figure*}
\centering
  \begin{subfigure}[t]{.48\linewidth}
    \centering
    \includegraphics[height=.27\textheight]{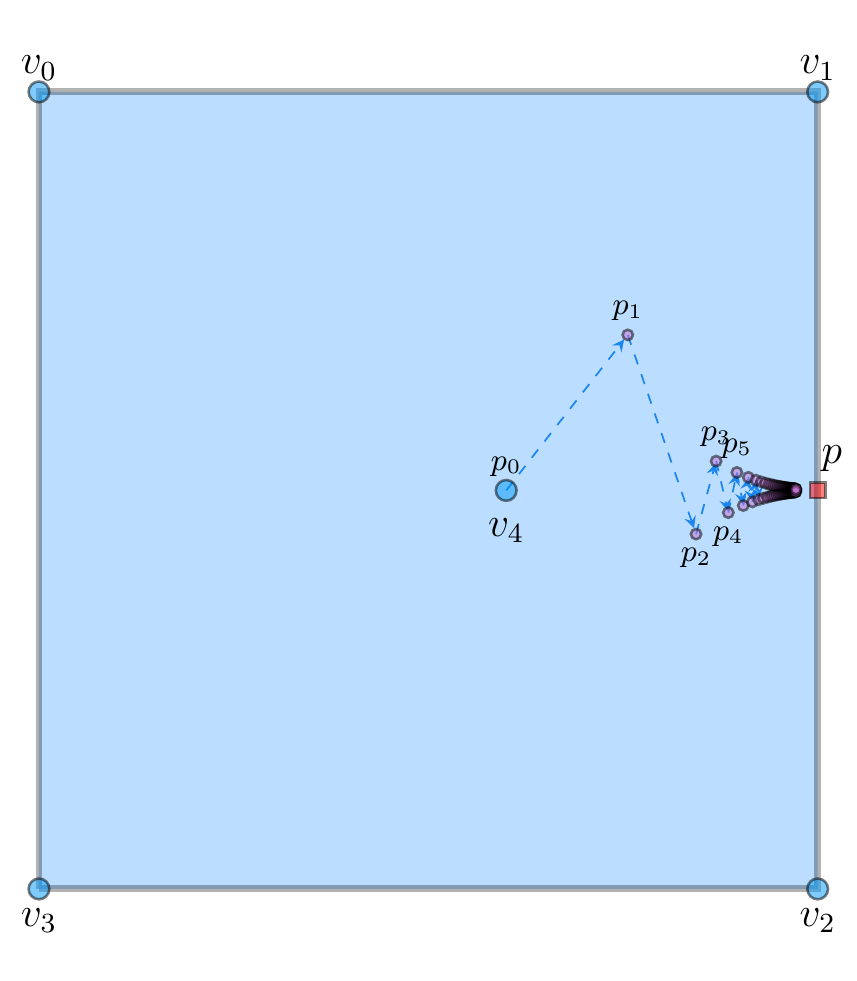}
    \caption{\label{fig:example-unitsquare-a} $p\in \convA$ over the boundary.}
  \end{subfigure}
  \begin{subfigure}[t]{.48\linewidth}
    \centering
    \includegraphics[height=.27\textheight]{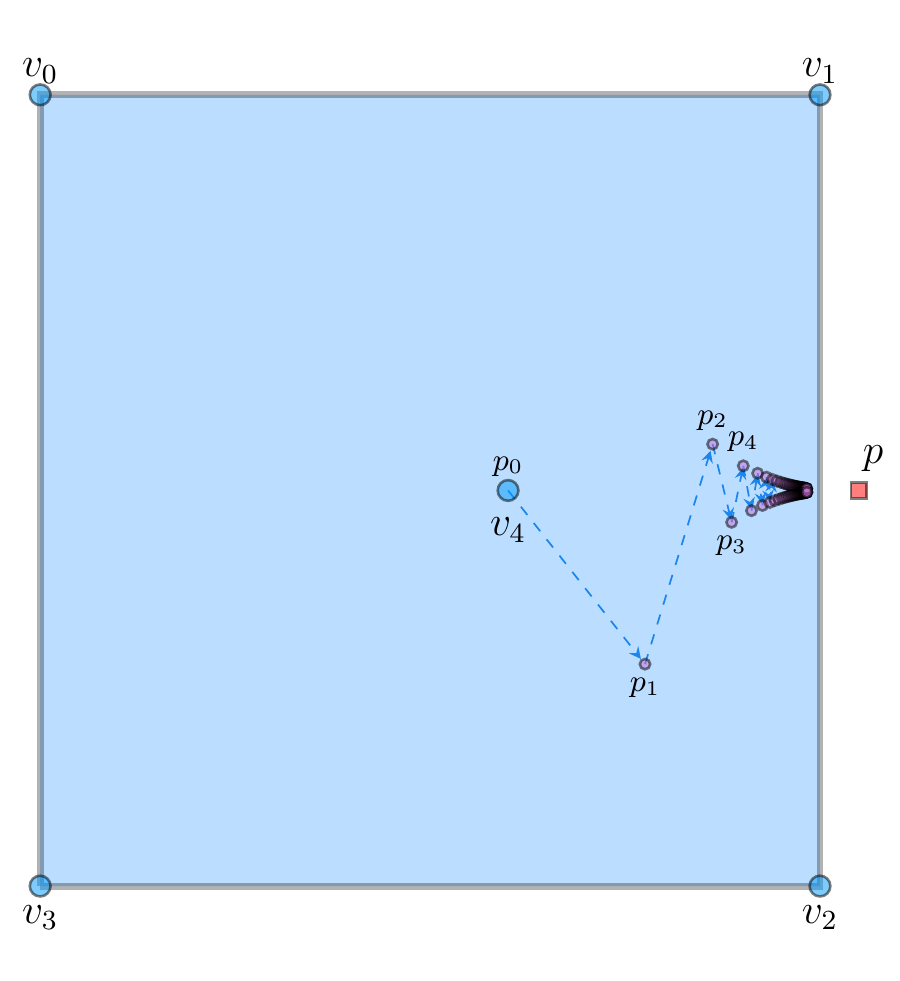}
    \caption{\label{fig:example-unitsquare-b} $p\notin \convA$ }
  \end{subfigure} 
	\caption{Unit squares of Example~\ref{exe:quadrado} with the first $60$ TA iterates.}	\label{fig:example-unitsquare}
\end{figure*}

Under the additional assumption that $p$ lies in the relative interior of $\convA$, and under a suitable choice of pivots in each iteration, it is possible to show that \eqref{eq:hip1} is satisfied with a constant $c$ whose lower bound depends only on the geometry of the problem.


\begin{assump}\label{assump:pnointerior}
$B_\rho(p)$ is contained in the relative interior of $\convA$. 
\end{assump}

Let $p_k$ be the current iterate such that $d(p,p_k) > \varepsilon R$. 
{
If Assumption~\ref{assump:pnointerior} holds for a given $\rho>0$,  then there exists a $q\in \convA$ that belongs to  the extension of the segment $[p_{k},p]$ (see Figure~\ref{fig:teoremapivoforte-a}) with $d(q,p)=\rho$. 
Now, if there exists a strict pivot $v$ with the property $(p - q)^T(v - q) \leq 0$, then
\begin{align}
\rho R \cos \angle qpv & \geq \| p - q \| \| v - p\| \cos \angle qpv = (q-p)^T (v-p)  \\
\ &= (q-p)^T (v- q + q - p) = \|q - p\|^2 - (p - q)^T(v - q) \geq \rho^2
\end{align}
which implies that 
\begin{equation}\label{eq:boundsintheta}
\sin \theta_k \leq \sin \angle qpv \leq \sqrt{1 - \rho^2/R^2} \leq \frac{1}{\sqrt{1 + \rho^2/R^2}}.
\end{equation}
}
{
We point out that \eqref{eq:boundsintheta} is necessary in the proof of the next theorem \citep[Theorem 14]{Kalantari}, and thus we consider the following assumption. 
}


\begin{assump}\label{assump:strong_pivot}
    Let  $q = p + \tau (p - p_k)$ for $\tau>0$ such that $q \in \convA$ and $d(q,p)=\rho$. 
    The Triangle Algorithm uses, at each iteration,  a strict pivot $v$ satisfying 
	\begin{equation}\label{eq:strongpivot}
		(p - q)^T(v - q) \leq 0.
	 \end{equation}
\end{assump}

{\begin{remark}
A strict pivot $v$ satisfying \eqref{eq:strongpivot} is represented in Figure~\ref{fig:teoremapivoforte-a}. 
We remark that a strict pivot does not necessarily satisfy condition~\eqref{eq:strongpivot}. 
In Figure~\ref{fig:teoremapivoforte-b}, the gray area represents the region where a strict pivot can be found. 
Note that $v_1,v_2$ and $v_3$ are strict pivots, but $v_2$ does not satisfy \eqref{eq:strongpivot} 
because it is in the \emph{wrong} side of the hyperplane $\bar{H}_q \coloneqq  \{ y \in \Rm \mid (p - q)^T (y - q) = 0 \}$. 
In fact, only strict pivots belonging to the dashed area satisfy the inequality \eqref{eq:strongpivot}.
\end{remark}}

\begin{figure*}
	\centering
	
	\begin{subfigure}[t]{.48\linewidth}
		\centering	
		\includegraphics[width=0.5\textwidth]{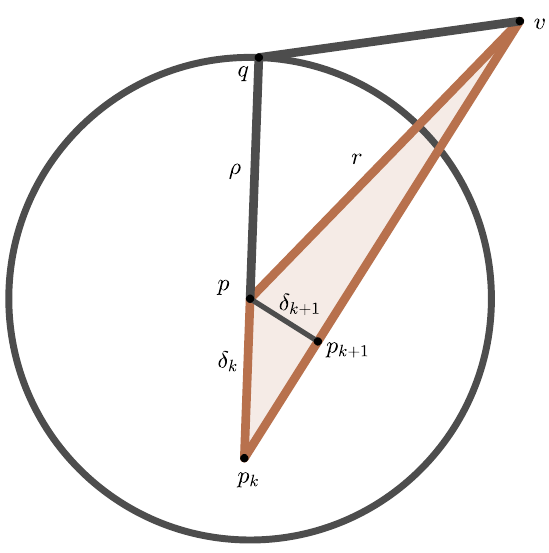}
		\caption{A strict pivot $v$ that satisfies  condition \eqref{eq:strongpivot}.}
		\label{fig:teoremapivoforte-a}
	\end{subfigure}
	\begin{subfigure}[t]{.48\linewidth}
    \centering
	\includegraphics[width=0.6\textwidth]{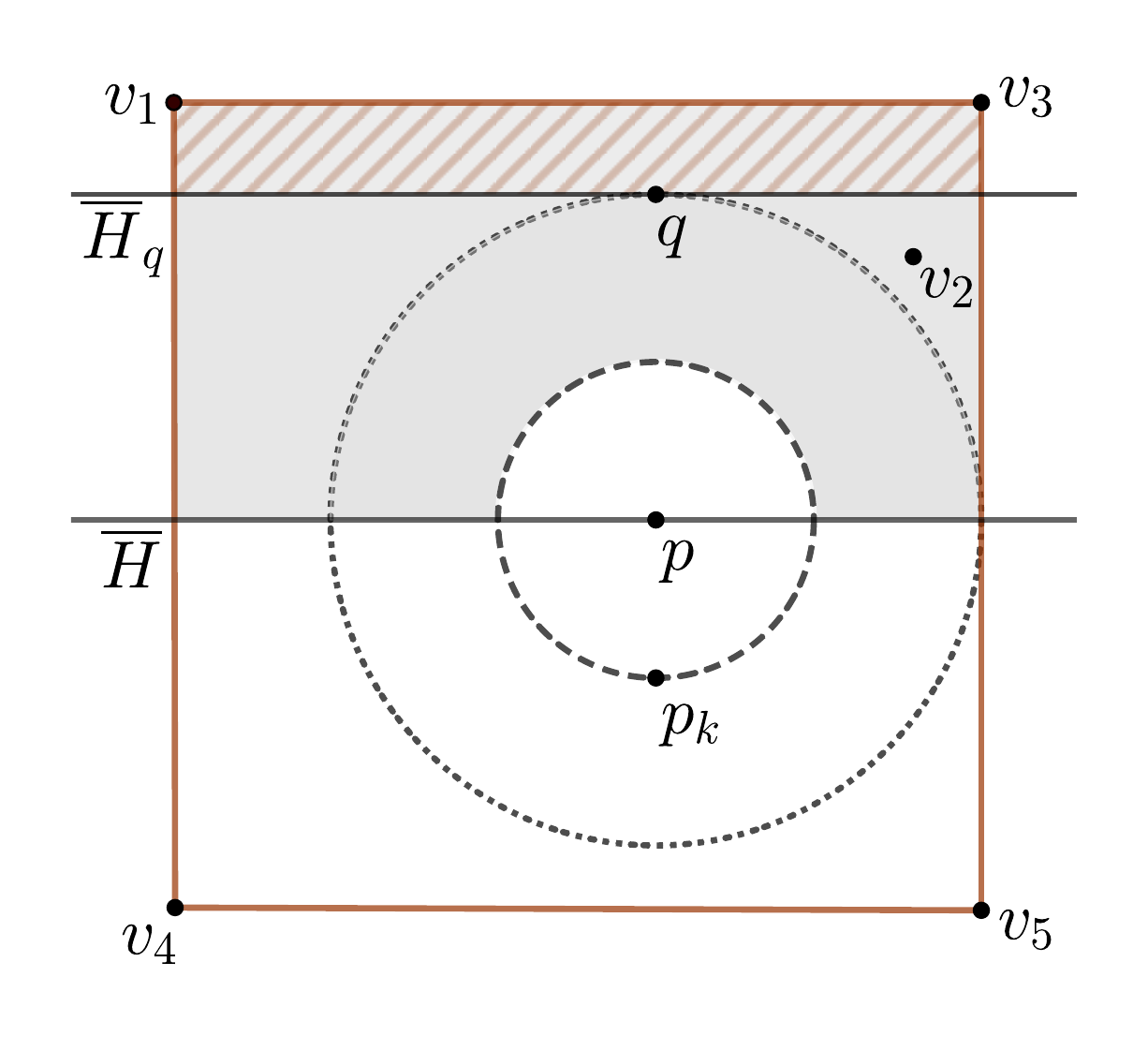}
	    \caption{\label{fig:teoremapivoforte-b} Shaded area is where strict pivots can be found while in the dashed area are the strict pivots for which \eqref{eq:strongpivot} holds.}
  \end{subfigure}

	  \caption{Representations of strict pivots.}
	\label{fig:teoremapivoforte}
\end{figure*}

\begin{theorem}\label{teo:complexp}
      Let  $R=\max\{d(v_i,v_j)\mid v_i,v_j\in \A\}$, $p_0\in \convA$, $\delta_0=d(p_0,p)$ and suppose that Assumptions \ref{assump:pnointerior} and \ref{assump:strong_pivot} hold. For any given $\varepsilon \in (0,1),$ the number of iterations that Algorithm~\ref{alg:ag} takes to compute $p_\varepsilon \in \convA$, such that $d(p,p_\varepsilon) \leq \varepsilon R$, is 
    $$\mathcal{O} \left( \frac{R^2}{\rho^2} \ln{\frac{\delta_0}{\varepsilon R}} \right).$$
\end{theorem}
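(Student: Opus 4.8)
The plan is to combine the per-iteration contraction guaranteed under Assumption~\ref{assump:strong_pivot} with a standard telescoping/geometric-decay argument to count iterations, exactly in the spirit of Remark~\ref{rem:strict}. The key input is the bound \eqref{eq:boundsintheta}, namely $\sin\theta_k \le \sqrt{1-\rho^2/R^2}$, which holds at every iterate $p_k$ with $d(p,p_k)>\varepsilon R$ because Assumption~\ref{assump:pnointerior} furnishes the point $q$ on the extension of $[p_k,p]$ with $d(q,p)=\rho$ and Assumption~\ref{assump:strong_pivot} guarantees a strict pivot $v$ with $(p-q)^T(v-q)\le 0$. (Note the mild bookkeeping point that here $R=\max\{d(v_i,v_j)\}$ is the diameter of $\A$, which upper-bounds $\max_i d(v_i,p)$ only when $p\in\conv(\A)$; since Assumption~\ref{assump:pnointerior} forces $p\in\conv(\A)$, this is fine, and one also has $d(q,p)=\rho$ with $q\in\conv(\A)$, so $\rho\le R$.)

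First I would translate the angle bound into a decay factor for $\delta_k = d(p_k,p)$. From \eqref{eq:gapred}, $\delta_{k+1}^2 = (1-\cos^2\theta_k)\,\delta_k^2 = \sin^2\theta_k\,\delta_k^2$, so \eqref{eq:boundsintheta} gives
\[
\delta_{k+1}^2 \le \left(1-\frac{\rho^2}{R^2}\right)\delta_k^2,
\qquad\text{hence}\qquad
\delta_{k+1} \le \delta_k\sqrt{1-\frac{\rho^2}{R^2}} \le \delta_k\exp\!\left(-\frac{\rho^2}{2R^2}\right),
\]
using $1+x\le e^{x}$ with $x=-\rho^2/R^2$. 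This holds for every $k$ with $\delta_k>\varepsilon R$, so as long as the stopping test has not triggered we get the geometric bound $\delta_k \le \delta_0\exp(-k\rho^2/(2R^2))$.

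Next I would solve for the iteration index at which the guarantee $\delta_k\le\varepsilon R$ is certified. It suffices to have $\delta_0\exp(-k\rho^2/(2R^2))\le \varepsilon R$, i.e.
\[
k \ \ge\ \frac{2R^2}{\rho^2}\,\ln\!\frac{\delta_0}{\varepsilon R}.
\]
Therefore the smallest such integer $K_\varepsilon$ satisfies $K_\varepsilon \le \frac{2R^2}{\rho^2}\ln\frac{\delta_0}{\varepsilon R} + 1 = \mathcal{O}\!\left(\frac{R^2}{\rho^2}\ln\frac{\delta_0}{\varepsilon R}\right)$, and since by Lemma~\ref{lem:reduce} the distances $\delta_k$ are strictly decreasing while the algorithm runs, the stopping criterion at step~\ref{step:agcriteriodeparada} is reached within $K_\varepsilon$ iterations, at which point the algorithm correctly declares $p\in\conv(\A)$. (If the $\ln$ term is non-positive the claim is vacuous since then $\delta_0\le\varepsilon R$ already at $k=0$.)

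The only genuinely delicate step is justifying that \eqref{eq:boundsintheta} is available at \emph{every} iterate, which reduces to two things: that the auxiliary point $q$ of Assumption~\ref{assump:strong_pivot} exists at each $p_k$ (this is where Assumption~\ref{assump:pnointerior} is used — the ray from $p_k$ through $p$, extended past $p$, must hit $\conv(\A)$ at distance exactly $\rho$, which follows because $B_\rho(p)\subseteq\operatorname{relint}\conv(\A)$ and $\conv(\A)$ is closed), and that a strict pivot obeying \eqref{eq:strongpivot} is actually selected (this is exactly what Assumption~\ref{assump:strong_pivot} postulates about the algorithm's pivot rule). Granting these, the chain of inequalities leading to \eqref{eq:boundsintheta} in the text goes through verbatim, and the rest is the elementary telescoping above. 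I would state the argument compactly, citing \eqref{eq:gapred}, \eqref{eq:boundsintheta}, Lemma~\ref{lem:reduce}, and the inequality $1+x\le e^x$, rather than re-deriving them.
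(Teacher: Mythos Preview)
Your proposal is correct and follows the same approach the paper implicitly relies on: the paper itself merely cites \citet[Theorem~14]{Kalantari} for the proof, but it has already set up the key ingredient \eqref{eq:boundsintheta}, and your argument is precisely the standard completion of that setup via \eqref{eq:gapred}, the inequality $1+x\le e^x$, and telescoping. In effect you have written out the details that the paper delegates to the reference, with the same contraction factor $\exp(-\rho^2/(2R^2))$ and the same iteration bound.
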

{
\begin{proof}
It can be found in \citep[Theorem~14]{Kalantari}. Even though Assumption~\ref{assump:strong_pivot} is not explicit in the statement of Kalantari's theorem, 
it becomes necessary in the proof. 
\end{proof}
}

This result implies the linear convergence of Algorithm~\ref{alg:ag}, using strict pivots satisfying Assumption~\ref{assump:strong_pivot}, when $p \in B_{\rho}(p) \subset \operatorname{relint}(\convA)$.
In Section~\ref{sec:new} we shall see a variant of Algorithm~\ref{alg:ag} for which Assumption~\ref{assump:strong_pivot} holds in every iteration.

\section{Frank-Wolfe for CHMP}\label{sec:fw}
We kick-start this section presenting the Frank-Wolfe algorithm (FW) for convex constrained optimization. We then explore one of its variants called Away Step Frank-Wolfe (ASFW) for which a linear convergence rate can be obtained. Finally, we formally discuss connections between FW and the Triangle Algorithm when applied to the convex hull membership problem.

\subsection{Frank-Wolfe algorithm}
Let $f: \Rm \rightarrow \R$ be a continuously differentiable convex function whose gradient is $L$-Lipschitz and $C \subset \Rm$ a non-empty, convex and compact set. Following \cite{lacoste2015global}, we  assume that $C \coloneqq \convA$, where $\A\subset \Rm$ is a finite set of points. Consider the following optimization problem
\begin{equation}\label{prob:minf}
\begin{aligned}
\min_{x \in C} & \quad f(x).\\
\end{aligned}
\end{equation}

The Frank-Wolfe algorithm~\citep{fw1956} (also known as Conditional Gradient) can be used to solve  problem~\eqref{prob:minf}, and it is summarized in Algorithm~\ref{alg:fw}. 
FW is a first-order iterative method for smooth convex optimization which aims to solve \eqref{prob:minf} through a sequence of linearized subproblems. 

\begin{algorithm}
\small
    \DontPrintSemicolon
	\SetAlgoLined
	\KwData{$x_0 \in \A,\epsilon>0$} 
	\For{$k= 0,1,2, \ldots$}{
	Set $\x_k \in \argmin_{x\in \A}\nabla f(x_k)^T(x-x_k)$ \label{passosubprobfw} \;
	 $d_k\gets  \x_k -x_k$. \; 
	
		\lIf{$\nabla f(x_k)^T d_k \geq -\epsilon$} {stop} \label{step:FWstop}
		Choose $\gamma_k \in (0,1]$. \;
		$x_{k+1}\gets  x_k+ \gamma_k d_k$
	}	
	\caption{Frank-Wolfe (FW)}
	\label{alg:fw}
\end{algorithm}

\begin{remark}
{One of the main advantages of FW is that, in many applications, the subproblem of Step~\ref{passosubprobfw} either admits a closed form solution or is  much cheaper in terms of computational cost than the quadratic subproblems required by projected gradient or proximal methods, leading to good scalability \citep{jaggi}.  Moreover, it keeps the iterates as convex combination of few points of $\A$, which is an interesting feature in problems requiring sparse solutions \citep{Clarkson}.}
\end{remark}
\begin{remark}
{Let $\xs$ be a solution of \eqref{prob:minf}. Since $f$ is convex and differentiable, and by the definition of Step~\ref{passosubprobfw}, we have
\[
h(x_k)\coloneqq f(x_k)-f(\xs) \leq -\nabla f(x_k)^T(\xs-x_k)\leq -\min_{x \in \A}\nabla f(x_k)^T(x-x_k)\eqqcolon g(x_k),
\] 
that is, $g(x_k)$ provides an upper bound for the primal gap $h(x_k)$, therefore justifying the stopping criterion in Step~\ref{step:FWstop}.}
\end{remark}

{In the following, we briefly review the convergence analysis for Algorithm~\ref{alg:fw}, 
so that the comparison of its convergence rate with those of other algorithms can be clear. 
For further details, see \citep{jaggi,lacoste2015global,beck2017linearly}.}

Since $\nabla f$ is Lipschitz with constant $L>0$, we get that 
\[
f(x_{k+1}) = f(x_k + \gamma_k d_k)  \leq f(x_k) + \gamma_k  \nabla f(x_k)^T d_k  + \dfrac{L}{2} \gamma_k^2 \| d_k \|^2 \leq f(x_k) - \gamma_k g(x_k) + \gamma_k^2\frac{L}{2}D^2,
\]
where $D \coloneqq \max \{ \| x - y \|\mid x,y \in C \}$ is the diameter of $C$. From this inequality and the definition of $h(x_k)$, we obtain
$$
h(x_{k+1}) \leq (1 - \gamma_k)h(x_k) + \gamma_k^2\frac{L}{2}D^2.
$$
Then, it is possible to derive  \citep[see Theorem~1 in][]{jaggi} that, for $\gamma_k = \dfrac{2}{k+2}$, 
\begin{equation} \label{eq:sublinearconvFW}
h(x_k) \leq \frac{2LD^2}{k+2},
\end{equation}  
from which the sublinear convergence of FW is established\footnote{If $\bar{\gamma}_k$ is the exact line-search step-size, the previous analysis is still valid because $f(x_k+\bar{\gamma}_k d_k) \leq f(x_k+\frac{2}{k+2}d_k).$}. Unfortunately, this $\mathcal{O}(1/k)$ complexity is tight, as showed by \cite{canon1968}. 

Nevertheless, under strong convexity of $f$, it is possible to achieve linear convergence in certain cases. In fact, if we suppose the existence of $\mu>0$ such that, for all $x,y$, 
$$
f(y) \geq f(x) +  \nabla f(x)^T (y - x)  + \dfrac{\mu}{2}\|y - x \|^2,
$$
and define $e_k \coloneqq  \xs - x_k$, we have
\begin{align}
f(x_k + \lambda e_k) - f(x_k) & \geq \lambda \nabla f(x_k)^T e_k + \dfrac{\lambda^2}{2} \mu \| e_k \|^2 \geq -\dfrac{ \left( \nabla f(x_k)^T \hat{e}_k  \right)^2}{2\mu}, \quad \forall \lambda \in \R,
\end{align}
where $\hat{e}_k \coloneqq  e_k/\|e_k\|$. 
In particular, for $\lambda=1$, we get 
\begin{equation}\label{eq:minush}
    - h(x_k) \geq - \dfrac{\left( \nabla f(x_k)^T \hat{e}_k  \right)^2}{2\mu}.
\end{equation}

On the other hand, using again that $\nabla f$ is $L$-Lipschitz we obtain 
\begin{align}
f(x_k + \gamma_k d_k) - f(x_k) & \leq \gamma_k  \nabla f(x_k)^T d_k  + \dfrac{\gamma_k^2}{2} L \| d_k \|^2. 
\end{align}
By taking $\gamma_k \coloneqq  \argmin_{\gamma \in (0,\gamma_{\max}]} f(x_k + \gamma d_k)$ in Algorithm~\ref{alg:fw},  where $\gamma_{\max} \in (0,1]$ is the maximum allowed step-size, if $\gamma_k^{\star} \coloneqq  -\dfrac{ \nabla f(x_k)^T d_k  }{L\|d_k\|^2} \leq \gamma_{\max}$,  then $\gamma_k = \gamma_k^{\star}$ and 
$$
\gamma_k  \nabla f(x_k)^T d_k  + \dfrac{\gamma_k^2}{2} L \| d_k \|^2 = -\dfrac{  \left(\nabla f(x_k)^T\hat{d}_k \right) ^2}{2L},
$$
where $\hat{d}_k \coloneqq  d_k/\|d_k\|$. 
Thus, from the last inequality, we obtain 
\begin{equation}\label{eq:diffh}
h(x_k) - h(x_{k+1}) \geq \dfrac{\left(\nabla f(x_k)^T\hat{d}_k \right) ^2}{2L}.
\end{equation}
This result, along with \eqref{eq:minush},  yields 
\begin{equation}\label{eq:thekey}
h(x_{k+1}) \leq \left[ 1-\frac{\mu}{L}\left(\dfrac{ \nabla f(x_k)^T \hat{d}_k }{ \nabla f(x_k)^T \hat{e}_k } \right)^2 \right] h(x_k).
\end{equation}

From \eqref{eq:thekey}, it is clear that the linear convergence will depend on our ability to bound the ratio between $\nabla f(x_k)^T \hat{d}_k$ 
and $\nabla f(x_k)^T \hat{e}_k$. This can be done in certain situations for the classic FW or variants of FW using different search directions $d_k$. 
For instance, \citet[Theorem~2]{guelat1986some}  show that if $\xs$ is in the relative interior of $C$, with distance to the boundary of $C$ at least $\rho>0$  then, for $k$ sufficiently large, $  \nabla f(x_k)^T d_k  \leq -(\rho/2) \norm{\nabla f(x_k)}$ and $\gamma^{\star}_k \leq 1$, which implies that 
\begin{equation}\label{eq:casoespecialfwinteriorrelativo}
    h(x_{k+1})\leq \left[  1 - \dfrac{\mu\rho^2}{LD^2} \right] h(x_k).
\end{equation} 
However, for the classic Frank-Wolfe, if $\xs$ lies on the boundary of $C$, then $ \nabla f(x_k)^T \hat{d}_k$ can get arbitrarily close to zero and the method may exhibit a zigzagging behavior ({see Figure~\ref{fig:fw-a}}).

\begin{figure*}
	\centering
	\begin{subfigure}[t]{.49\linewidth}
		\centering
		\includegraphics[height=.17\textheight]{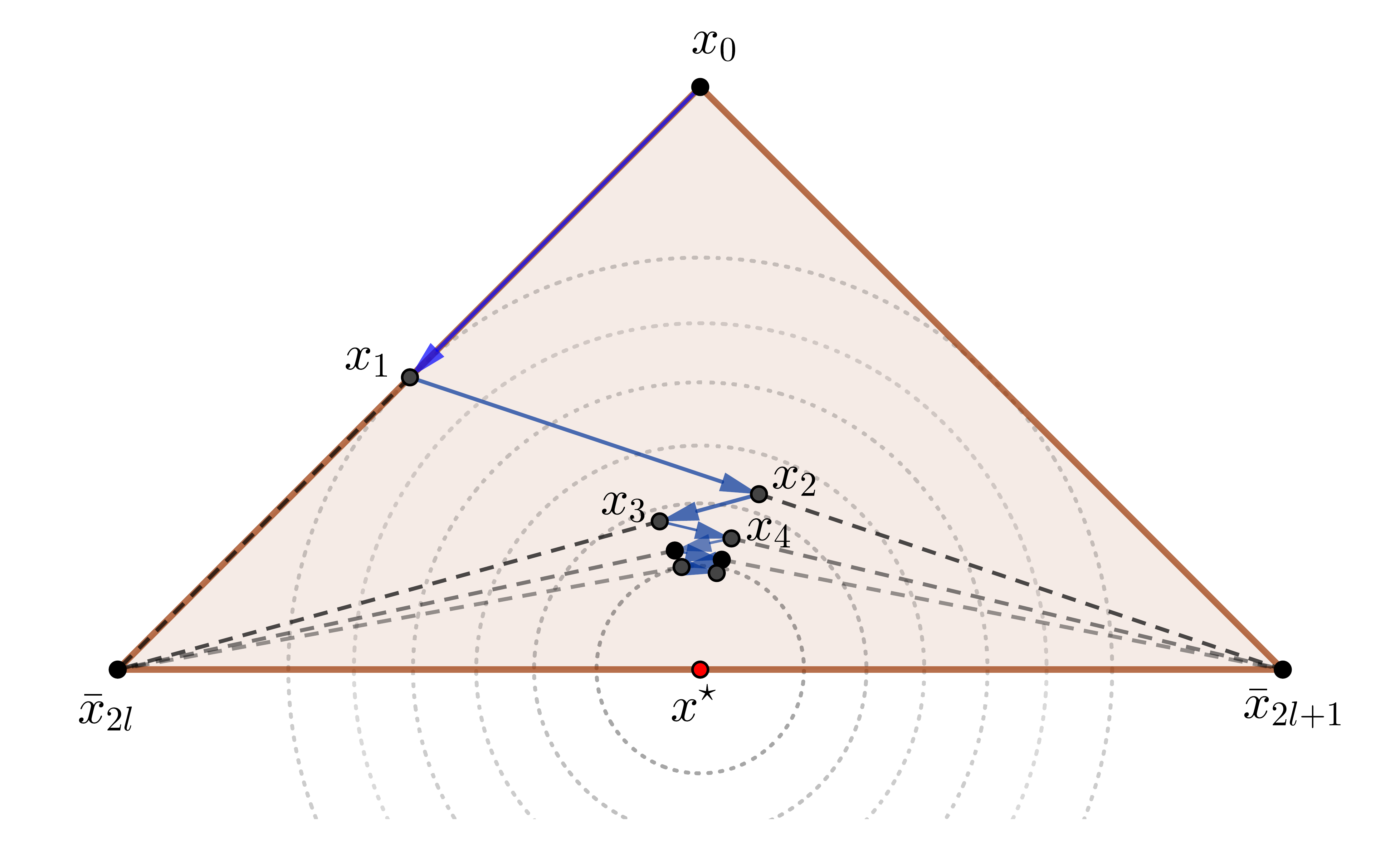}
		\caption{\label{fig:fw-a} Zigzagging behavior of Frank-Wolfe.}
	\end{subfigure}
	\begin{subfigure}[t]{.49\linewidth}
		\centering
		\includegraphics[height=.17\textheight]{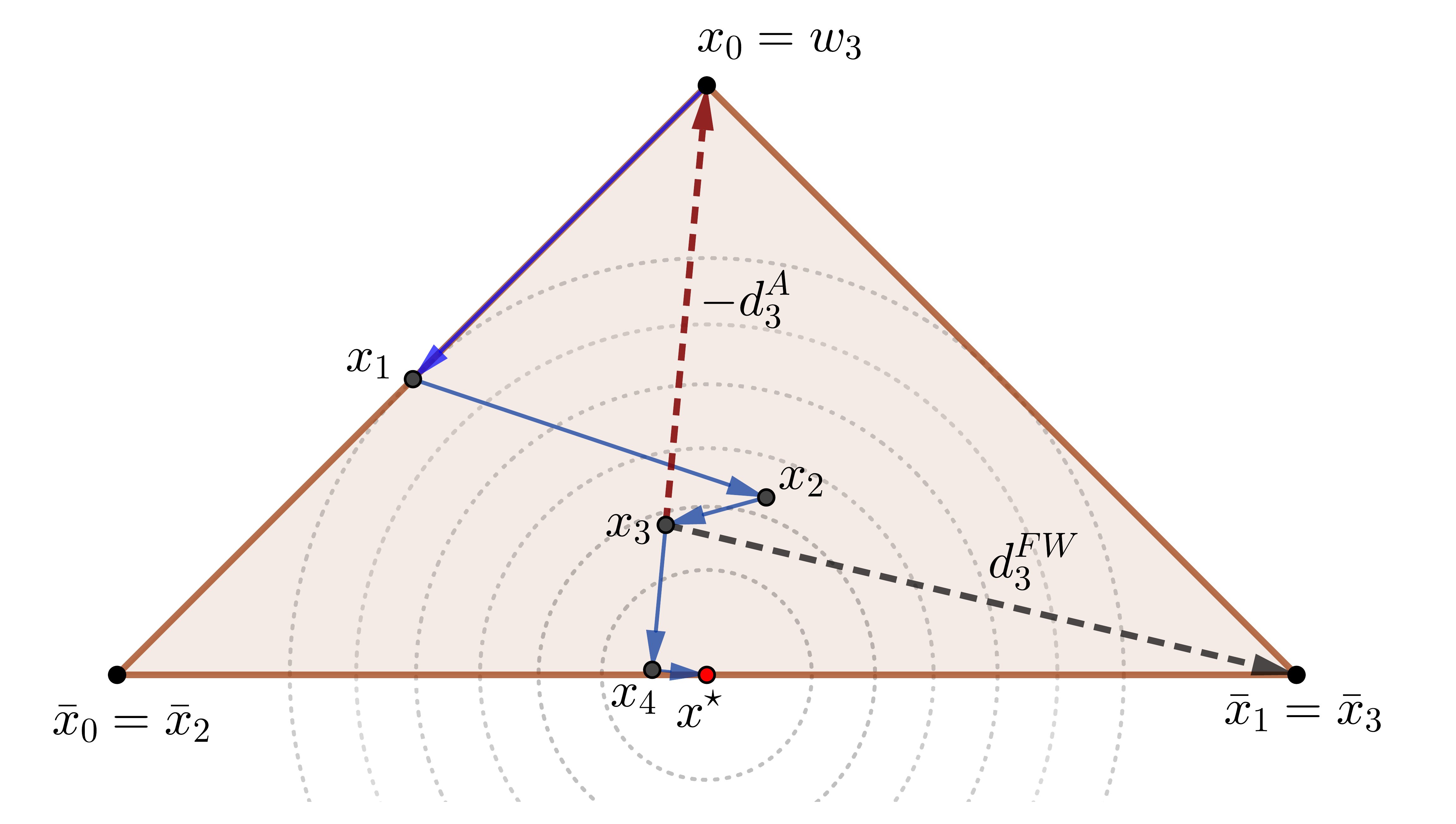}
		\caption{\label{fig:asfw-b} Iterations of Away Step Frank-Wolfe.}
	\end{subfigure} 
	\caption{Example of FW and ASFW iterations.}	\label{fig:geofwAsfw}
\end{figure*}

In next section we discuss a Frank-Wolfe variant, called Away-Step Frank-Wolfe (ASFW), which exhibits linear convergence independently of the location of $\xs \in C$. 

\subsection{Away-Step Frank-Wolfe}\label{sec:asfw}
We proceed to briefly reviewing ASFW algorithm presented in  \citet[Algorithm~1]{lacoste2015global}, which is summarized in Algorithm~\ref{alg:asfw}, and we provide a discussion on its main properties. Let $V$ be the set of extreme points of $C\coloneqq \convA$ and $n = |\A|$. In each iteration $k$ of Algorithm~\ref{alg:asfw}, the iteration point $x_k$ is a convex combination of elements of $\A$: $x_{k} = \sum_{v \in \A} \alpha_v^k v$. Let $\alpha^k \in \Delta_{n}$ be a vector containing the coefficients of such combination and define $U^k \coloneqq \{ v \in \A \mid  \alpha_v^k > 0 \}$. Without loss of generality, let us suppose that the elements of $\A$ are ordered in such a way that $x_0$ is the first one: then $\alpha^0 = e_1$, the first canonical vector of $\Rn$.

\begin{algorithm}
\small
    \DontPrintSemicolon
	\SetAlgoLined
	\KwData{$x_0 \in \mathcal{A},\epsilon>0, U^0= \{x_0\}, \alpha^0= e_1 \in \Delta _{n}$} 
	\For{$k= 0,1,2, \ldots$}{
		Set $\x_k \in \argmin_{x\in \mathcal{A}}\nabla f(x_k)^T(x-x_k) $ \label{stepalg:asfw-xbar} \;
		 $d_{k}^{FW}\gets \x_k - x_k$\; 
		Set $w_k \in  \argmax_{x\in U^k} \nabla f(x_k)^T(x-x_k) $ \;
		 $d_{k}^{A}\gets x_k - w_k$\; \label{step:awaydirection} 
		\lIf{$\nabla f(x_k)^T d_{k}^{FW} \geq -\epsilon$}{stop and return $x_k$} \label{step:criterioparadaASFW}
		\eIf{$\nabla f(x_k)^T d_{k}^{FW} \leq \nabla f(x_k)^T d_{k}^{A}$} {$d_k \gets  d_k^{FW}$, $\gamma_{\max}\gets 1$}		
		{$d_k\gets d_k^{A}$, $\gamma_{\max}\gets \alpha_{w_k}/(1-\alpha_{w_k})$ \label{step:gammamaxaway} 
		}
		
		Set $\gamma_k \in  \argmin_{\gamma\in [0,\gamma_{\max}]}f(x_k+ \gamma d_k)$
		
		$x_{k+1}\gets  x_k+\gamma_k d_k$.  
		
		Update $\alpha^{k+1}$ accordingly.
		
		$U^{k+1}\gets  \{v \in \mathcal{A}\mid  \alpha_v^{k+1}>0\}$.
	}
	\caption{Away-Step Frank-Wolfe (ASFW)}\label{alg:asfw}
\end{algorithm}

\begin{remark}
  In each iteration of ASFW, two search directions are computed: the FW direction $d_k^{FW}$ and an ``away direction'' $d_k^A$. 
If $\nabla f(x_k)^T d_{k}^{FW} \leq \nabla f(x_k)^T d_{k}^{A}$, the classic FW direction is taken and the maximum allowed step-size is  $\gamma_{\max}=1$. Otherwise, $\gamma_{\max}=\alpha_{w_k}/(1-\alpha_{w_k})$ gives a conservative upper bound for the step-size in the direction $d_k^A$ in order to ensure $x_{k+1} \in C = \convA$; see \cite{lacoste2015global} for details. 
\end{remark}

  \begin{remark}\label{rem:asfw}
    {The purpose of away-steps is to reduce the weight of elements in the current active set $U^k$ that contribute to increase the objective function (see Figure~\ref{fig:asfw-b})}. 
  When $d_k = d_k^A$ and $\gamma_k = \gamma_{\max}$, then $U^{k+1} = U^k \setminus \{w_k\}$. If the step-size in the away direction is smaller than $\gamma_{\max}$, $U_k$ remains unchanged, but the weight of $w_k$ is reduced. 
When $d_k = d_k^{FW}$, if $\gamma_k=1$, then $U^{k+1} = \{ \bar{x}_k \}$ and $U^{k+1} = U^k \cup \{\bar{x}_k\}$ otherwise, where $\bar{x}_k$ is given in Step \ref{stepalg:asfw-xbar} of Algorithm \ref{alg:asfw}. This means that the cardinality of $U^k$ either decreases, or increases by one in each iteration. Therefore, if $s^k$ is the number of iterations, until iteration $k$, in which $d_k = d_k^A$ and $\gamma_k = \gamma_{\max}$, and $\ell^k$ is the number of iterations in which an element was added to $U^k$, we have $s^k + \ell^k \leq k-1$, $s^k \leq \ell^k$ and thus $s^k \leq (k-1)/2$.
  \end{remark}
  
  {In the sequel, we present the global linear convergence of Algorithm~\ref{alg:asfw}.} 
  Let us define $d_k^P = \bar{x}_k - w_k$, which is known in the literature as \emph{pairwise direction}. Clearly,  
  \begin{equation}\label{eq:dirp}
      - \nabla f(x_k)^T d_k^P \leq -2  \nabla f(x_k)^T d_k .
  \end{equation}
  Moreover, for $d_k^P$ we can establish a bound on the ratio $(\nabla f(x_k)^T \hat{d}_k^P)/( \nabla f(x_k)^T \hat{e}_k) $. For that, we consider the next lemma, which holds if the convex set  $C$ is  a compact polyhedron, that is,  $C  \coloneqq \{ x \in \Rm \mid  a_i^T x \leq b_i, i=1,\ldots, \ell \}$. 
In the following, for $U\subset \Rm$, define $\ISet_C(U)$ the set of active constraints of the polyhedron $C$ for points in $U$, \emph{i.e}, $\ISet_C(U)\coloneqq  \{ i \in\{1,\ldots, \ell\} \mid  a_i^T u  = b_i, \forall u \in U \}$.
  
  \begin{lema}[{\citet[Lemma~3.1]{beck2017linearly}}]\label{lem:bslemma} Let  $C  \coloneqq \{ x \in \Rm \mid a_i^T x \leq b_i, i=1,\ldots, \ell \}$ be a compact polyhedron with $V$ being the set of its  extreme points, $c \in \Rm$, and consider $U \subset V$. If there exists $ z \in \Rm$ such that  $a_i^T z \leq 0$, for all $i\in \ISet_C(U)$, and $ c^T z  > 0$, then
      \[
      \max_{p \in V, u \in U}  c^T (p - u) \geq \dfrac{\Omega_C}{m+1} \dfrac{ c^T z }{\|z\|},
      \]
      where \[\Omega_C \coloneqq \displaystyle \min_{v\in V,  \,i\in \{1,\ldots, \ell\}\mid b_i> a_i^T v}  \dfrac{b_i -  a_i^T v}{\|a_i\|}.\]
  \end{lema}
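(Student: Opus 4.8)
The plan is to exhibit an explicit extreme point $p \in V$ and an explicit point $u \in U$ for which the inner product $c^T(p-u)$ can be bounded below by the claimed quantity, using the vector $z$ as a ``direction of improvement'' that stays feasible for the active constraints. First I would pick $u \in \argmax_{u \in U} c^T u$ (equivalently the worst point of $U$ for $c$) — actually, more carefully, I would pick $p \in \argmax_{v \in V} c^T v$ and $u \in \argmin_{u \in U} c^T u$, so that $\max_{p\in V, u\in U} c^T(p-u)$ already dominates $c^T(p') - c^T(u)$ for any convenient choices $p', u$; the real work is to find one good $p'$. The key idea is to start from any $u \in U$ and move along $z$: the ray $u + t z$, $t \ge 0$. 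Since $a_i^T z \le 0$ for every $i \in \ISet_C(U)$ and $a_i^T u = b_i$ for those $i$ (by definition of $\ISet_C(U)$), the constraints active at $U$ remain satisfied along the whole ray. Hence the ray leaves $C$ only because some \emph{other} constraint $j \notin \ISet_C(U)$ (one with $b_j > a_j^T u$) becomes tight; let $t^\star > 0$ be the largest $t$ with $u + t z \in C$, and let $j$ be a blocking constraint, so $a_j^T(u + t^\star z) = b_j$ and $t^\star = (b_j - a_j^T u)/(a_j^T z)$ with $a_j^T z > 0$.

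Next I would estimate $t^\star$ from below. We have $t^\star = (b_j - a_j^T u)/(a_j^T z) \ge (b_j - a_j^T u)/(\|a_j\|\,\|z\|)$ by Cauchy–Schwarz. Since $j \notin \ISet_C(U)$ there is some $v \in U \subset V$ with $a_j^T v < b_j$, hence $b_j - a_j^T u \ge$ ... — here I need the quantity to be uniform, so instead I bound $b_j - a_j^T u \ge \min_{v \in V,\ i:\ b_i > a_i^T v} (b_i - a_i^T v)$ only if $b_j > a_j^T u$; this holds because $j$ is a blocking constraint strictly inactive at $u$. Therefore $t^\star \ge \Omega_C / \|z\|$, using the definition of $\Omega_C$ and $\|a_j\|$ normalized inside it. Now $x^\star := u + t^\star z$ lies on a face of $C$; write it as a convex combination of extreme points, $x^\star = \sum_{v \in V} \lambda_v v$. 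Then $c^T x^\star = \sum \lambda_v c^T v \le \max_{v \in V} c^T v$, so there is an extreme point $p \in V$ with $c^T p \ge c^T x^\star = c^T u + t^\star (c^T z)$. Consequently
\[
\max_{p \in V,\ u \in U} c^T(p - u) \ \ge\ c^T p - c^T u \ \ge\ t^\star\, c^T z \ \ge\ \frac{\Omega_C}{\|z\|}\, c^T z,
\]
which is the claim up to the factor $1/(m+1)$.

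That missing $1/(m+1)$ is the only subtle point and is the main obstacle: the crude argument above gives $\Omega_C$, not $\Omega_C/(m+1)$, so in fact the honest route is slightly different. One cannot in general take a single long step along $z$ and land on a vertex; instead one walks to the first blocking hyperplane, recurses on the lower-dimensional face (whose active set has grown), and accumulates at most $m+1$ such steps before reaching a vertex, losing a factor $1/(m+1)$ in the worst step, or — cleaner — one invokes a Carathéodory/dimension argument showing that among the extreme points appearing in the decomposition of the point reached after the \emph{first} step, at least one improves on $c^T u$ by at least $t^\star c^T z/(m+1)$ is false in general, so the recursion is the safe plan: set $u_0 = u$, $U_0 = \ISet_C(\{u_0\})$, repeatedly move along a direction $z_\ell$ feasible for $U_\ell$ with $c^T z_\ell > 0$ until a new constraint blocks, increasing $|\ISet_C|$ by at least one each time; after at most $m+1$ rounds the iterate is a vertex $p$, and summing the per-round gains $\sum_\ell t_\ell^\star c^T z_\ell$ together with the bound $t_\ell^\star \ge \Omega_C/\|z_\ell\|$ on \emph{one} of them (combined with monotonicity $c^T$-increase) yields $c^T(p-u) \ge \Omega_C c^T z /((m+1)\|z\|)$. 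I would therefore structure the proof as: (i) the single-step geometry (ray along $z$, blocking constraint, lower bound $t^\star \ge \Omega_C/\|z\|$ on the step that uses the \emph{original} $z$); (ii) the dimension-counting recursion that reaches a vertex in $\le m+1$ steps while never decreasing $c^T$; (iii) assembling the bound, attributing the full $\Omega_C c^T z/\|z\|$ gain to the first step and dividing by $m+1$ to account for the possibility that this gain is spread across rounds. I expect step (ii) — making the recursion and the ``$c^T$ never decreases, and the largest single gain is at least a $1/(m+1)$ fraction'' bookkeeping fully rigorous — to be where the care is needed; the algebra in (i) and (iii) is routine.
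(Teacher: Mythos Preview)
The paper does not prove this lemma; it is quoted verbatim from \cite{beck2017linearly} and used as a black box in the linear-convergence argument for ASFW. So there is no ``paper's own proof'' to compare against.

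That said, your proposal has a genuine gap that you should be aware of. In your ray argument you start from a \emph{vertex} $u\in U$ and claim that ``the ray leaves $C$ only because some other constraint $j\notin\ISet_C(U)$ (one with $b_j>a_j^T u$) becomes tight''. The parenthetical is exactly where the argument breaks: $j\notin\ISet_C(U)$ only guarantees that $b_j>a_j^T u'$ for \emph{some} $u'\in U$, not for the particular $u$ you started from. Since $u$ is an extreme point of $C$, it has at least $m$ active constraints, typically many more than those in $\ISet_C(U)$; any constraint $j\in\ISet_C(\{u\})\setminus\ISet_C(U)$ with $a_j^T z>0$ blocks the ray immediately ($t^\star=0$), and then $b_j-a_j^T u=0$, so your lower bound $t^\star\ge\Omega_C/\|z\|$ collapses. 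This is precisely why your ``first pass'' yields a bound that is \emph{stronger} than the lemma (missing the $1/(m+1)$): that should have been a red flag that something was wrong, not merely incomplete.

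Your proposed recursion does not repair this. You write ``set $u_0=u$, $U_0=\ISet_C(\{u_0\})$, repeatedly move along a direction $z_\ell$ feasible for $U_\ell$'', but the hypothesis only gives you a single direction $z$ feasible for $\ISet_C(U)$, not for the larger set $\ISet_C(\{u_0\})$; once you allow $z_\ell$ to vary you lose the connection to $c^T z/\|z\|$ in the conclusion, and the sentence ``attributing the full $\Omega_C c^T z/\|z\|$ gain to the first step'' is unjustified because, as just noted, the first step along $z$ can have length zero. The actual Beck--Shtern argument does not start from a vertex of $U$: it works from a point of $\conv(U)$ chosen so that only the constraints in $\ISet_C(U)$ are active there, and the factor $m+1$ enters through a Carath\'eodory-type decomposition, not through a step-counting recursion. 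If you want to write out a proof, that is the structure to aim for.
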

Lemma \ref{lem:bslemma} holds for the case where $C \coloneqq \convA$, as $\convA$ is a polyhedron,  even though we usually do not know explicitly the linear inequalities that define $\convA$. Thus, if we assume that $\bar{x}_k \in V \subset \A$ in Algorithm~\ref{alg:asfw}, we may write 
  \[
  - \nabla f(x_k)^T d_k^P  = - \nabla f(x_k)^T( \bar{x}_k - w_k)  = \max_{p \in V, u \in U^k}  - \nabla f(x_k)^T (p - u) , 
  \]
  and for $z \coloneqq e_k = \xs - x_k$ and $c\coloneqq -\nabla f(x_k)$, we can apply Lemma~\ref{lem:bslemma} to obtain 
\[  -2  \nabla f(x_k)^T d_k  \geq -  \nabla f(x_k)^T d_k^P  \geq \dfrac{\Omega_C}{m+1} \left( -\nabla f(x_k) \right)^T \hat{e}_k  > 0,\]
  which shows that, in each iteration of ASFW for which $\gamma_k^{\star} \leq  \gamma_{\max}$, we have
 \[ h(x_{k+1}) \leq \left[ 1-\frac{\mu}{4L}\left( \dfrac{\Omega_C}{D(m+1)} \right)^2 \right] h(x_k).\]
  When $d_k=d_k^A$ and $\gamma_k^{\star} > \gamma_{\max}$, we can only ensure that $h(x_{k+1}) \leq h(x_k)$. However,  as discussed before (see Remark~\ref{rem:asfw}), this type of iteration cannot occur too often. Therefore, for the ASFW we have
  \begin{equation}\label{eq:convasfw}
      h(x_k) \leq h(x_0) \left[ 1-\frac{\mu}{4L}\left( \dfrac{\Omega_C}{D(m+1)} \right)^2 \right]^{(k-1)/2}, \quad \forall k\in \N,
  \end{equation}
  which implies global linear convergence (since $\Omega_C \leq D$ and $\mu \leq L$). We summarize the above discussion, which combines the analysis of \cite{lacoste2015global} with Lemma~\ref{lem:bslemma}, in the following result.
  
  \begin{theorem}\label{teo:linear_asfw}
       Let $f$ be $\mu$-strongly convex, $\nabla f$ $L$-Lipschitz, $C$ a compact polyhedral set (with diameter $D$) and let $\fstar$ be the optimal value for \eqref{prob:minf}. 
       Let $( x_k )_{k\in \mathbb{N}}$ be the sequence generated by ASFW for problem \eqref{prob:minf} and assume that $\{\bar{x}_k\} \subset V$. 
       Then, for all $k \geq 1$
       \[
       f(x_k) - \fstar \leq (1 - \eta)^{(k-1)/2} (f(x_0) - \fstar),
       \]
       where $\eta\coloneqq \frac{\mu}{4L}\left( \dfrac{\Omega_C}{D(m+1)} \right)^2$, with $\Omega_C$ from Lemma~\ref{lem:bslemma}. 
  \end{theorem}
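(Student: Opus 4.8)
The plan is to assemble Theorem~\ref{teo:linear_asfw} directly from the chain of inequalities already displayed in the excerpt, treating the preceding discussion as the substance of the argument and the theorem as its formal packaging. First I would note that, since $C=\convA$ is a compact polyhedron, Lemma~\ref{lem:bslemma} applies, and that the extreme-point set $V$ of $C$ satisfies $V\subseteq\A$, so the assumption $\{\bar x_k\}\subset V$ is consistent with Step~\ref{stepalg:asfw-xbar} of Algorithm~\ref{alg:asfw}. With $h(x_k)\coloneqq f(x_k)-\fstar$, I would then split the analysis of a generic iteration into the two regimes used above: (i) the ``good'' iterations, where $\gamma_k^\star\le\gamma_{\max}$, and (ii) the ``bad'' away-step iterations, where $d_k=d_k^A$ and $\gamma_k^\star>\gamma_{\max}$.

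For the good iterations I would reproduce the estimate leading to~\eqref{eq:convasfw}: combine the descent inequality~\eqref{eq:diffh} (with $d_k$ replaced using~\eqref{eq:dirp} and the pairwise-direction bound obtained from Lemma~\ref{lem:bslemma} with $z\coloneqq e_k=\xs-x_k$ and $c\coloneqq-\nabla f(x_k)$) together with the strong-convexity lower bound~\eqref{eq:minush}, exactly as in the derivation of~\eqref{eq:thekey}, to conclude
\[
h(x_{k+1})\le\Bigl[\,1-\tfrac{\mu}{4L}\bigl(\tfrac{\Omega_C}{D(m+1)}\bigr)^2\,\Bigr]h(x_k)=(1-\eta)\,h(x_k).
\]
For the bad iterations I would use only monotonicity, $h(x_{k+1})\le h(x_k)$, which holds because $\gamma_k$ is chosen by exact line search on a direction of descent. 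Note that when~\eqref{eq:dirp} is invoked we also need $\nabla f(x_k)^Tz=-\nabla f(x_k)^Te_k>0$, i.e.\ that $h(x_k)>0$; if $h(x_k)=0$ the algorithm has already reached the optimum and the claimed bound holds trivially, so this causes no loss of generality.

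Then I would invoke the counting argument of Remark~\ref{rem:asfw}: among the first $k-1$ iterations, the number of ``bad'' iterations (drop steps with $\gamma_k=\gamma_{\max}$) is at most $(k-1)/2$, hence the number of ``good'' iterations is at least $(k-1)/2$. Multiplying the per-iteration bounds over all iterations — a factor $(1-\eta)$ for each good iteration and a factor $\le 1$ for each bad one — and using $0<\eta\le 1$ (which follows from $\Omega_C\le D$, $m\ge 1$, and $\mu\le L$, so that $1-\eta\in[0,1)$ and the exponent can only be decreased by dropping bad iterations) yields
\[
h(x_k)\le(1-\eta)^{(k-1)/2}\,h(x_0),
\]
which is the claimed estimate. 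The only genuinely delicate point is the bookkeeping in the last step — justifying that one may lower-bound the number of contracting iterations by $(k-1)/2$ and that replacing bad-iteration factors by $1$ is legitimate precisely because $1-\eta\le 1$; everything else is a direct citation of~\eqref{eq:diffh}, \eqref{eq:minush}, Lemma~\ref{lem:bslemma}, and Remark~\ref{rem:asfw}. I would therefore keep the proof short, essentially stating that it follows by combining~\eqref{eq:convasfw} with the discussion preceding it, and referring to~\cite{lacoste2015global} and~\cite{beck2017linearly} for the routine verifications.
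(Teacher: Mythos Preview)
Your proposal is correct and follows essentially the same approach as the paper: the theorem is explicitly presented there as a summary of the preceding discussion, and you assemble exactly those ingredients --- the per-iteration contraction from \eqref{eq:diffh}, \eqref{eq:minush}, \eqref{eq:dirp} and Lemma~\ref{lem:bslemma}, together with the drop-step counting of Remark~\ref{rem:asfw} --- to obtain \eqref{eq:convasfw}. One small slip: where you write ``$\nabla f(x_k)^T z=-\nabla f(x_k)^T e_k>0$'' you mean $c^T z=-\nabla f(x_k)^T e_k>0$ (since $z=e_k$ and $c=-\nabla f(x_k)$); the conclusion you draw from it is the right one.
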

  
\begin{remark}
{Note that, for problem  \eqref{prob:proj}, $f \equiv \Psi$, and such function satisfies the assumptions in Theorem~\ref{teo:linear_asfw}, with $\mu = L = 1$.}
\end{remark}

\subsection{Frank-Wolfe similarities with the Triangle Algorithm}\label{sec:rel}

If we apply the Frank-Wolfe algorithm (Algorithm~\ref{alg:fw}) to  formulation~\eqref{prob:proj}, in each iteration, we must solve the subproblem
\begin{equation} \label{subproblemaFW}
\begin{aligned}
& \underset{y}{\text{min}}
& & (y_k - p)^T (y-y_k) \\
& \text{s.t.} & & y \in \A.
\end{aligned} 
\end{equation}
Since $\A = \{ v_1,\dots,v_n\}$ is a finite set, $\bar{y}_k$ solution of \eqref{subproblemaFW}
can be chosen as 
$\bar{y}_k = v_j$, where 
$v_j^T  (y_k - p) = \min_{1 \leq i \leq n} v_i^T  (y_k - p)$. Taking into account that  $y_k \in \convA$ for every $k$, we may associate $y_k$ with iterate $p_k$ of Algorithm~\ref{alg:ag} to obtain 
\begin{equation}\label{eq:vjfw}
v_j^T  (p_k - p) = \min_{1 \leq i \leq n} v_i^T  (p_k - p).
\end{equation}

In comparison, in the $k$-th iteration,  Algorithm~\ref{alg:ag} chooses $v_i \in \A$ such that  $v_i^T (p_k - p) \le (\| p_k\|^2 - \|p\|^2)/2$, 
which, by Lemma~\ref{lema:pivo}, characterizes a pivot. Thus, it is clear by \eqref{eq:vjfw}, that Algorithm~\ref{alg:fw} not only chooses a pivot, but one that minimizes the product $v_i^T (p_k-p)$. In this sense, Frank-Wolfe (Algorithm~\ref{alg:fw}) applied to \eqref{prob:proj}, with \emph{exact line-search}, can be seen as a \emph{greedy} version of the Triangle Algorithm. 

On the other hand, the Triangle Algorithm can be interpreted as an \emph{inexact} version of Frank-Wolfe, in the following sense. 
Similar to \cite{jaggi}, we consider $\tilde{y}_k \in C$ an inexact solution of the FW subproblem 
(see Step~\ref{stepalg:asfw-xbar} in Algorithm~\ref{alg:fw}, replacing $x$ by $y$, and $f$ by $\Psi$), 
if the following inequality holds
\begin{equation}\label{eq:FWinexato}
    (\tilde{y}_k-y_k)^T\nabla \Psi(y_k)\leq \min_{y \in C}(y-y_k)^T\nabla \Psi(y_k)+ \frac{1}{2}\hat{\delta}_k \bar{\gamma}_k C_\Psi,
\end{equation}
where $\hat{\delta}_k \geq 0$ is an inexactness parameter, $\bar{\gamma}_k$ is given in \eqref{eq:alpha} and $C_\Psi$ is the \emph{curvature constant} of $\Psi$ with respect to the set $C$ (see \cite{jaggi} for a formal definition and detailed discussion). 
If we consider \eqref{prob:proj}, we have $\Psi(y) = \|y - p\|^2/2$, $\nabla \Psi(y) = y - p$, and $C = \convA$ (for which $C_\Psi = D^2$). Then, by identifying $y_k$ with $p_k$ and setting the inexactness parameter as  
\begin{equation}\label{eq:deltakcerto}
    \hat{\delta}_k = \frac{2}{ D^2}\norm{v_j-p_{k}}^2, 
\end{equation}
where $j \in \argmin \{v_i^T(p_k - p)\mid 1 \leq i \leq n \}$, if $v_i$ is a pivot, we have 
\begin{align}
\min_{y \in C}(y -p_k)^T\nabla \Psi(p_k)+ \frac{1}{2} \bar{\gamma}_k C_\Psi \hat{\delta}_k &= (v_{j}-p_k)^T(p_k-p)+ \frac{1}{2}  \bar{\gamma}_k D^2 \frac{2}{ D^2}\norm{v_j-p_{k}}^2 \nonumber  \\
&=(v_{j}-p_k)^T(p_k-p) -\frac{(p_k-p)^T(v_j-p_k)}{\norm{v_j-p_k}^2}\norm{v_j-p_k}^2  = 0\nonumber \\
&\geq -\dfrac{1}{2} \| p_k - p \|^2  \geq  (p_k - p)^T (v_i - p_k), \label{eq:tainexactfw}
\end{align}
 where in the second inequality we use Lemma~\ref{lema:pivo}(iii). Thus, \eqref{eq:tainexactfw} shows that pivot $v_i$ in the $k$-th iteration of Algorithm~\ref{alg:ag}, is an inexact solution to the FW subproblem for $\hat{\delta}_k$ as in \eqref{eq:deltakcerto}.

Therefore, it is not surprising that the complexity results for both algorithms are alike. 
As discussed in Section~\ref{sec:fw} (see inequality~\eqref{eq:sublinearconvFW}), the iterations of the Frank-Wolfe method for the problem \eqref{prob:minf} satisfy 
	\[
	\Psi(y_k) - \Psi(\ys)  = \mathcal{O}\left(  \dfrac{1}{k} \right).
	\]
	In particular, for the convex hull membership problem with $p \in \convA$, it means
	\[\dfrac{1}{2} \| y_k - p \|^2 = \mathcal{O}\left(  \dfrac{1}{k} \right).\]
	
	Furthermore, by Theorem~\ref{teo:complex1}, when $p \in \convA$, the Triangle Algorithm generates a sequence of points $p_k \in \convA$ such that 
	$$
	\| p_k - p \| < \sqrt{\frac{48}{k}} R, 
	$$
thereby exhibiting the same iteration complexity as Frank-Wolfe, if we identify $y_k \equiv p_k$.

Interestingly, the iteration complexity is improved for both algorithms when $p$ is in the relative interior of $\convA$ as stated in Theorem~\ref{teo:complexp} for the Triangle algorithm with strict pivots and as can be devised from \eqref{eq:casoespecialfwinteriorrelativo}  for Frank-Wolfe  (see also  \citet[Theorem~2]{guelat1986some}).
In Table~\ref{tab:compTAeFW}, we present a comparison between the iteration complexity of the Triangle Algorithm for the case when $p \in \convA$ with those of Frank-Wolfe and Away Step Frank-Wolfe (ASFW) under different assumptions. 
{The constants in the table for FW and ASFW are derived from \eqref{eq:sublinearconvFW}, \eqref{eq:thekey} and \eqref{eq:casoespecialfwinteriorrelativo}.}

\begin{table}
    \centering
	\caption{Iterations complexity to find $p_k \in \convA$ such that $d(p_k,p) < \varepsilon R$. The last assumption applies only to TA.}
    \label{tab:compTAeFW}
    \setlength\extrarowheight{2pt} 
\begin{tabularx}{\textwidth}{CCCC}
\toprule
         \bf{Assumptions} & \bf{Triangle Algorithm} & \bf{Frank-Wolfe} & \bf{ASFW}\\ 
		 \midrule 
         $p \in \convA$ & $\mathcal{O}(1/\varepsilon^2)$  & 
       $\mathcal{O}(D^2/\varepsilon^2)$ & $\mathcal{O}\Big(\frac{D^2(m+1)^2}{\Omega_C^2}\ln \frac{\delta_0}{\varepsilon}\Big)$  \\
       and Assumption~\ref{assump:pnointerior} & $\mathcal{O}(1/\varepsilon^2)$  & $\mathcal{O}\Big(\frac{
       	D^2}{\rho^2}\ln \frac{\delta_0}{\varepsilon}\Big)$ & $\mathcal{O}\Big(\frac{D^2(m+1)^2}{\Omega_C^2}\ln \frac{\delta_0}{\varepsilon}\Big)$ \\
       and Assumption~\ref{assump:strong_pivot}  & $\mathcal{O} \left( \frac{R^2}{\rho^2} \ln{\frac{\delta_0}{\varepsilon R}} \right)$  & $\mathcal{O}\Big(\frac{D^2}{\rho^2}\ln \frac{\delta_0}{\varepsilon}\Big)$ & $\mathcal{O}\Big(\frac{D^2(m+1)^2}{\Omega_C^2}\ln \frac{\delta_0}{\varepsilon}\Big)$ \\ \bottomrule
    \end{tabularx}

\end{table}

In addition, when $p \notin \convA$, according to Theorem~\ref{teo:complex1}(ii), the iteration complexity of the Triangle Algorithm depends only on the ``problem geometry'', that is, it does not depend on the tolerance $\varepsilon$. Since FW can be viewed as a Greedy TA, this assertion also holds, as long as stopping criteria based on distance duality are integrated to FW (see Section~\ref{sec:stop}).

When it comes to the cost per iteration, although in the worst case both algorithms have to evaluate the entire list of inner  products $v_i^T(p_k - p)$, it is expected that, in general, the iteration of the Algorithm~\ref{alg:ag} requires fewer evaluations because it only asks for a simple pivot, in contrast to the Frank-Wolfe method that needs a pivot which minimizes the inner product $v_i^T(p_k - p)$.

\subsection{Frank-Wolfe as a Greedy Triangle algorithm}\label{sec:new}	

We have just investigated the relationship between the Triangle Algorithm and the Frank-Wolfe method applied to the problem~\eqref{prob:proj}: FW is equivalent to the Triangle Algorithm  when in TA one chooses a pivot $v_j$ such that 
\begin{equation}\label{eq:greedy}
v_j \in \argmin \{ v_i^T (p_{k} - p) \mid v_i \in \A \setminus \{ p_{k} \} \}.
\end{equation}
In view of Lemma~\ref{lema:pivo}(ii), problem \eqref{eq:greedy} means that the Frank-Wolfe method (with the peculiarities of the CHMP) is equivalent to a \emph{greedy} Triangle Algorithm.  However, instead of choosing $v_j$ such that
\begin{equation}\label{eq:pivob}
v_j^T (p_{k} - p) \le \dfrac{\| p_{k} \|^2 - \| p \|^2}{2},
\end{equation}
we select $v_j \in {\cal A} \setminus \{ p_k \}$ such that the left side of \eqref{eq:pivob} is minimal. 
We call this algorithm \emph{Greedy Triangle} (GT) and summarize it in Algorithm~\ref{alg:aggreedy}. 
\begin{algorithm}
\small
	\DontPrintSemicolon
	\SetAlgoLined
	\KwData{$\A, p, R,\varepsilon \in (0,1)$} 
		Choose $p_0 \in \argmin \{ d(v_j,p) \mid  v_j \in \A \}$ \; 
		\For{$k= 0,1,2, \ldots$}{
			\lIf{ $d(p_k,p)< \varepsilon R$}{stop.}  \label{step:aggcriteriodeparada} 
			Choose $v_j \in \argmin \{ v_i^T (p_{k} - p) \mid v_i \in \A \setminus \{ p_{k} \} \}$.\label{step:gananciosopivo}\\
	\lIf{{$v_j^T (p_{k} - p) > p^T (p_k - p)$}}{stop: $p_k$ is a $p$-witness.}
		Set $\bar{\gamma}_k \in \argmin \{d(p, (1-\gamma)p_k + \gamma v_j )\mid  {0 \le \gamma \le 1}\}$. \\
		 $p_{k+1} \gets  (1-\bar{\gamma}_k)p_k + \bar{\gamma}_k v_j$.
		 }
	\caption{\textsc{Greedy Triangle (GT)}}\label{alg:aggreedy}
\end{algorithm}

\begin{remark}
{We point out that, for $p=0$, Algorithm~\ref{alg:aggreedy} coincides with the von Neumann\footnote{According to Dantzig,  von Neumann communicated that algorithm to him in the 1940s and Dantzig  studied it later in an unpublished manuscript \citep{dantzig92}.} algorithm  \citep{epelman2000,goncalves2009,pena2016}.} 
\end{remark}

Since the inequality  $v_j^T (p_{k} - p) \le p^T (p_{k} - p)$ characterizes a strict pivot, in view of Step~\ref{step:gananciosopivo} of   Algorithm~\ref{alg:aggreedy} it follows that a strict pivot is chosen whenever possible. 
Furthermore, if there are strict pivots at  $p_{k}$ verifying condition \eqref{eq:strongpivot} (see Assumption~\ref{assump:strong_pivot}),   Algorithm~\ref{alg:aggreedy} will employ one of them.

\begin{prop}\label{prop: greedychoosebest}
Assume that Assumption~\ref{assump:pnointerior} holds. 
\begin{listi}
    \item If $d(p_k,p) > \varepsilon R$, then there exists a strict pivot $w \in \A$ satisfying \eqref{eq:strongpivot}. 
    \item Algorithm~\ref{alg:aggreedy} always chooses $v_j \in \A$ satisfying condition \eqref{eq:strongpivot}. 
\end{listi}
\end{prop}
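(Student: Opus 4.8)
The plan is to prove both items together by exhibiting an explicit strict pivot satisfying \eqref{eq:strongpivot} and then observing that Algorithm~\ref{alg:aggreedy}'s greedy choice is forced to pick a pivot of this type. First I would set up the geometry exactly as in the discussion preceding Assumption~\ref{assump:strong_pivot}: since $d(p_k,p) > \varepsilon R > 0$ we have $p_k \neq p$, so the ray $q = p + \tau(p - p_k)$, $\tau > 0$, is well defined, and by Assumption~\ref{assump:pnointerior} (that $B_\rho(p)$ lies in $\operatorname{relint}(\convA)$) we may pick $\tau$ so that $q \in \convA$ with $d(q,p) = \rho$. Writing $q$ as a convex combination $q = \sum_i \lambda_i v_i$ with $\lambda \in \Delta_n$, I would then argue that at least one $v_i$ in the support must itself satisfy $(p-q)^T(v_i - q) \le 0$: indeed $\sum_i \lambda_i (p-q)^T(v_i - q) = (p-q)^T\big(\sum_i \lambda_i v_i - q\big) = (p-q)^T(q-q) = 0$, so not all the terms $(p-q)^T(v_i-q)$ can be strictly positive, and some $w := v_i$ in the support has $(p-q)^T(w-q) \le 0$, which is precisely \eqref{eq:strongpivot}.

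Next I would verify that such a $w$ is in fact a strict pivot for $p_k$, i.e. that $(p_k - p)^T(w - p) \le 0$ (Definition~\ref{def:pivoestrito} / Remark~\ref{rem:strict}). This is the computation already carried out in the excerpt between Assumption~\ref{assump:pnointerior} and \eqref{eq:boundsintheta}: since $q - p = \tau(p - p_k)$ with $\tau > 0$, the vectors $q-p$ and $p - p_k$ are positive multiples of one another, and
\[
(q-p)^T(w-p) = (q-p)^T(w - q) + (q-p)^T(q-p) = -(p-q)^T(w-q) + \|q-p\|^2 \ge \rho^2 > 0,
\]
whence $(p_k - p)^T(w - p) = -\tfrac{1}{\tau}(q-p)^T(w-p) < 0 \le \tfrac{1}{2}\|p_k - p\|^2$, so $w$ is a (strict) pivot by Lemma~\ref{lema:pivo}(iv). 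This establishes item (i). (One should also note $w \neq p_k$: from $(p_k-p)^T(w-p)<0$ while $(p_k-p)^T(p_k-p)=\|p_k-p\|^2>0$, so $w$ is a legitimate choice in Step~\ref{step:gananciosopivo}.)

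For item (ii), the key observation is that the strict-pivot condition $(p_k - p)^T(v - p) \le 0$ is equivalent (expanding the inner product) to $v^T(p_k - p) \le p^T(p_k - p)$. Since by item (i) there exists $w$ with $w^T(p_k-p) \le p^T(p_k-p)$, the greedy minimizer $v_j \in \argmin\{v_i^T(p_k-p) \mid v_i \in \A\setminus\{p_k\}\}$ chosen in Step~\ref{step:gananciosopivo} satisfies $v_j^T(p_k - p) \le w^T(p_k - p) \le p^T(p_k-p)$, so $v_j$ is itself a strict pivot. It then remains to upgrade this to the stronger condition \eqref{eq:strongpivot}, and here I would repeat the averaging argument of the first paragraph but now restricted to the \emph{set of strict pivots at $p_k$}: I expect the main (and really the only delicate) point of the proof to be checking that the specific $w$ produced above — the support point of the representation of $q$ satisfying \eqref{eq:strongpivot} — is reachable as an even smaller value of $v^T(p_k-p)$ among strict pivots, i.e. that the greedy choice $v_j$ not merely lands among strict pivots but among those on the correct side of $\bar H_q$. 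The cleanest route is: the greedy $v_j$ minimizes $v^T(p_k-p)$ over all of $\A\setminus\{p_k\}$, hence in particular $v_j^T(p_k-p) \le w^T(p_k-p)$; translating back, $(p_k-p)^T(v_j - p) \le (p_k-p)^T(w-p)$, and since $q - p$ is a positive multiple of $p_k - p$ this gives $(q-p)^T(v_j-q) = (q-p)^T(v_j-p) - \|q-p\|^2 \le (q-p)^T(w-p) - \rho^2 = -(p-q)^T(w-q) \le 0$, i.e. $(p-q)^T(v_j-q) \ge 0$ — which is the \emph{opposite} inequality, so I would instead need to reverse the sign bookkeeping carefully, and this sign chase is exactly where I'd slow down and be careful. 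Modulo getting these signs straight, the greedy minimality of $v_j$ forces $(p-q)^T(v_j-q) \le (p-q)^T(w-q) \le 0$, which is \eqref{eq:strongpivot}, completing item (ii).
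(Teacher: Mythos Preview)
Your proposal is correct and, for item~(ii), coincides with the paper's argument. The sign confusion you flagged is resolved by noting that $p-q=\tau(p_k-p)$ (not $q-p$) is the positive multiple of $p_k-p$; then greedy minimality gives $(p_k-p)^T(v_j-w)\le 0$, hence $(p-q)^T(v_j-w)\le 0$, and so
\[
(p-q)^T(v_j-q)\le (p-q)^T(w-q)\le 0,
\]
exactly as you asserted in your last sentence and exactly as the paper computes.

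For item~(i) your route differs from the paper's. The paper obtains $w$ by invoking Theorem~\ref{teo:dual2} with $q$ playing the role of the query point (since $q\in\convA$, a strict $q$-pivot at $p$ exists, and by Remark~\ref{rem:strict} this is precisely~\eqref{eq:strongpivot}). You instead give a direct averaging argument: write $q=\sum_i\lambda_i v_i$ and use $\sum_i\lambda_i(p-q)^T(v_i-q)=0$ to extract a support point $w$ with $(p-q)^T(w-q)\le 0$. Your approach is more self-contained (it does not rely on the strict distance duality theorem and sidesteps its hypothesis $q\notin\A$), while the paper's approach is shorter given that Theorem~\ref{teo:dual2} is already available. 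Both then verify that such $w$ is a strict pivot for $p$ at $p_k$ via the same computation you wrote.
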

\begin{proof} (i) Let $p_k \in \convA$ be  such that $d(p,p_k) > \varepsilon R$. Assumption~\ref{assump:pnointerior} implies that there exists $\tau>0$ such that $q = p + \tau(p - p_k) \in \convA$ and $d(q,p)=\rho$.  Since $q \in \convA$, applying  Theorem~\ref{teo:dual2} for $q$ (instead of $p$), we conclude that there exists a strict $q$-pivot $w$ at $p$. Thus,  inequality~\eqref{eq:strongpivot} follows from Remark~\ref{rem:strict}. \\
    (ii)  Consider $v_j$ of  Step~\ref{step:gananciosopivo} in Algorithm~\ref{alg:aggreedy} and let $w \in \A$ be a strict pivot satisfying \eqref{eq:strongpivot}. 
    Then, 
   \[ \begin{aligned}
        (p-q)^T(v_j-q) &= -\tau(p-p_k)^T(v_j-p-\tau(p-p_k)) = -\tau(p-p_k)^Tv_j +\tau(p-p_k)^Tp+\tau^2\norm{p-p_k}^2  \\ 
        &\leq -\tau(p-p_k)^Tw +\tau(p-p_k)^Tp+\tau^2\norm{p-p_k}^2  = (p-q)^T(w-q) \leq 0.
	\end{aligned}\]
    Hence, $v_j$ also satisfies the inequality~\eqref{eq:strongpivot}.
\end{proof}
A possible drawback of Algorithm~\ref{alg:aggreedy}, in comparison with Algorithm~\ref{alg:ag}, is that, at each iteration, it is necessary to go through the entire list $\A$ to assure the minimum of \eqref{eq:greedy}. Therefore, the cost per iteration is always $\mathcal{O}(nm)$.

\section{Projected gradient methods for CHMP} \label{sec:spg}

Another class of first-order methods that can be used to solve the CHMP is that of projected gradient methods. 
The projected gradient method is well known in the literature to solve problems as \eqref{prob:minf}. Given a feasible point $x_k\in C$, the method considers the search direction $d_k\coloneqq \x_k-x_k$, where  $\x_k \coloneqq  P_{C}(x_k - \nabla f(x_k))$ and $P_C$ denotes the orthogonal projection onto the closed and convex set $C$. It is not difficult to show that if $x_k$ is not a stationary point for \eqref{prob:minf}, then $d_k$ is a descent direction, and that $x_k + \gamma d_k \in C$, for all $\gamma \in [0,1]$. On the other hand, one can show that  $\| d_k \| = 0$ if, and only if, $x_k$ is a stationary point for \eqref{prob:minf}. 

Over the last decades, nonmonotone strategies for nonlinear optimization began to become popular \citep{gll86,zh2004,gs2017} and, along with scaling ideas, are present in enhanced versions of the projected gradient method. An example is  the Spectral Projected Gradient (SPG) method, proposed by  \cite{birgin2009spectral,birgin2000nonmonotone}, which uses Barzilai-Borwein (spectral, BB-type) scaling of the gradient direction combined with nonmonotone line-search. 

The SPG iteration is given by $x_{k+1} =  x_k+ \gamma_k d_k$, where the search direction $d_k$ is defined as $d_k\coloneqq  P_C(x_k- \lambda_k \nabla f(x_k))- x_k$. Here, $\lambda_k$ is the spectral  scaling parameter 
\begin{equation}\label{eq:spec}
\lambda_k = \frac{s_{k-1}^Ts_{k-1}}{s_{k-1}^T u_{k-1}},
\end{equation}
where $s_{k-1}\coloneqq x_k-x_{k-1}$ and $u_{k-1}\coloneqq \nabla f(x_k)- \nabla f(x_{k-1})$. Rather than imposing a sufficient decrease at each iteration, a characteristic of the SPG is to employ a nonmonotone line-search in order to favor  the acceptance of the full-step $\gamma_k=1$, while still guaranteeing global convergence \cite[Section~2]{birgin2000nonmonotone}.  

For instance, the nonmonotone line-search proposed by ~\cite{gll86} depends on an integer parameter $M \geq 1$ and imposes a functional decrease with respect to the highest functional value over the last $M$ iterations (if $M=1$ the line-search is monotone).  

Algorithm \ref{alg:spg} summarizes the Spectral Projected Gradient method as in \cite{birgin2014spectral}. 

 \begin{algorithm}
 \small
 	\DontPrintSemicolon
 	\SetAlgoLined
 	\KwData{$x_0 \in C, \epsilon > 0, M \geq 1, \eta \in (0,1), 0< \lambda_{\min} \leq \lambda_{\max} < \infty$ and  $\lambda_0 \in  [\lambda_{\min},\lambda_{\max}] .$} 
 	\For{$k= 0,1,2, \ldots$} { 
 	    $\bar{x}_k= P_{C}(x_k- \lambda_k \nabla f(x_k))$ and define $d_k= \bar{x}_k -x_k$. \label{step:spgxbar}\\
 		\lIf{$\norm{d_k}< \epsilon$}{stop: return $x_k$.\label{step:spgstop}} 
 		Compute $f_{\max} = \max \{f(x_{k-j}) \mid \,  0 \leq j \leq \min\{k, M-1\}\}$ and set $\gamma \leftarrow 1$. \\
 		\While{ $f(x_k+ \gamma d_k) > f_{\max} + \eta \gamma \nabla f(x_k)^T d_k$\label{step:testnonmonotone}}{
 		$\gamma \leftarrow \gamma/2$}
 		$\gamma_k \leftarrow \gamma$ \;
 		$x_{k+1}\gets  x_k+\gamma_k d_k$.\;
 		 $s_k\gets x_{k+1}-x_{k} $ \;
 		 $u_k\gets \nabla f(x_{k+1})-  \nabla f(x_{k})$. \;
 		\eIf{$s_k^T u_k \leq 0$} {$\lambda_{k+1}\gets  \lambda_{\max}.$}{
 		$\lambda_{k+1}\gets  \max \{ \lambda_{\min}, \min \{s_k^Ts_k/s_k^T u_k, \lambda_{\max}\}\}$.}
 		} 	
 	\caption{\textsc{Spectral Projected Gradient (SPG)}}\label{alg:spg}
 \end{algorithm}
 
 \begin{remark}
{The main cost per iteration of projected gradient methods is the cost of computing the projection $P_C(\cdot )$. Fortunately, if we consider formulation \eqref{prob:quad} for the CHMP, then $f \equiv \Phi$ and $C = \Delta_n$, the unit simplex, for which the projection can be computed in ${\cal O}(n \log n)$ operations \citep{Condat2016}.}
 \end{remark}

\section{Specific stopping criteria for CHMP}\label{sec:stop}
The distance duality (see Theorem~\ref{teo:dd}) is a remarkable result for the CHMP, and it is the keystone of the Triangle Algorithm. Thus, a natural question that arises is: how other first-order methods applied to CHMP can benefit from this result  in order to employ specialized stopping criteria?

From the discussion in Section~\ref{sec:new}, we established that Frank-Wolfe applied to CHMP can be seen as a Greedy Triangle Algorithm, in the sense that it chooses $v_j \in \A \setminus \{p_k\}$ that minimizes $v_i^T (p_k - p)$. Hence, in case $v_j^T (p_k - p) > (\|p_k\|^2 - \|p\|^2)/2$, we can assert that there is no pivot at $p_k$ and $p \notin \convA$. This way, as in the Triangle Algorithm, it is possible to stop FW (and its variants) as soon as a witness is detected. Furthermore, as in TA, we also stop FW iterations as soon as $\|p_k-p\| \le \varepsilon R$.

Another important question is how to set the tolerance $\epsilon$ in Algorithm~\ref{alg:fw} (Step~\ref{step:FWstop}) in order to ensure $\| p_k - p \| \leq \varepsilon R$. 
Note that the classic stopping criterion of Algorithm~\ref{alg:fw} (and Algorithm~\ref{alg:asfw}) is $\nabla \Psi(y_k)^T(\bar{y}_k-y_k) \geq -\epsilon$ (where we denote an iterate by $y_k$ instead of $x_k$ because we are considering problem \eqref{prob:proj}). 
Let $\ys \in \convA$ be the solution of \eqref{prob:proj}. 
From the discussion in Section~\ref{sec:fw}, we know that 
\[\Psi(y_k) - \Psi(\ys) \le \nabla \Psi(y_k)^T (y_k - \bar{y}_k) \le \epsilon,\]
where $\bar{y}_k$ is the solution of the FW subproblem. 
For problem \eqref{prob:proj}, this inequality is equivalent to 
\[\| y_k - p\|^2 - \| \ys - p \|^2 \le 2 \epsilon,\]
which yields $\| y_k - p\| - \| \ys - p \| \le 2 \epsilon/\| y_k - p\|$. Thus, imposing $\epsilon \coloneqq  \| y_k - p\| \varepsilon R/2$, we  have
\begin{equation}\| y_k - p\| \le \| \ys - p \| + \varepsilon R.\label{eq:stoppingCrit}\end{equation}
If $p \in \convS$, $\ys$, solution of \eqref{prob:proj}, is such that $\norm{\ys - p} = 0$ and then inequality \eqref{eq:stoppingCrit} coincides with $\|p_k - p \| \leq \varepsilon R$ (with $p_k = y_k$). 
On the other hand, if $\nabla \Psi(y_k)^T (y_k - \bar{y}_k) \le \| y_k - p \| \varepsilon R/2$ and $\norm{y_k - p} > \varepsilon R$ then, from  \eqref{eq:stoppingCrit}, $\norm{\ys - p} > 0$ and we can conclude that $p \notin \convS$. 
Therefore, instead of the classic FW stopping criterion, we can use the \emph{relative error} criterion 
\begin{equation}\label{eq:pstopfw}
\nabla \Psi(y_k)^T (y_k - \bar{y}_k) \le \| y_k - p \| \dfrac{\varepsilon R}{2}.
\end{equation}

When it comes to projected gradient methods applied to problem \eqref{prob:quad},   
we recall that $\nabla \Phi(x_k) = A^T(A x_k - p)$ and, since $x_k \in \Delta_n$, we can write $p_k = Ax_k \in \convA$. Thus, each gradient component can be written as $[\nabla \Phi(x_k)]_i = v_i^T (p_k - p)$ and, in case each component is greater than $(\|p_k\|^2 - \|p\|^2)/2$, we can stop and return $p_k = A x_k$ as a witness that $p \notin \convA$. 

Next, we discuss how to set the tolerance $\epsilon$ in Algorithm~\ref{alg:spg}  (Step~\ref{step:spgstop}) in order to guarantee that $\norm{A \bar{x}_k - p} \leq \varepsilon R$ (here, $\bar{x}_k$ is from Step~\ref{step:spgxbar}). For that, first we need the following lemma.

\begin{lema}\label{lema:auxspgcriterio}
Let $f: \Rn \rightarrow \R $ be a continuously differentiable convex function with  $L$-Lipschitz gradient and $C \subset \Rn$ be a non-empty, convex and compact set with diameter $D$. If $\xs \in C$ is a minimizer of $f$ over $C$ and  $\bar{x} \coloneqq P_C(x-\lambda\nabla f(x))$, for some $\lambda>0$ and $x \in C$, then 
\[\label{eq:auxspgcriterio}
f(\bar{x})- f(\xs) \leq D\left(\frac{L}{2}+ \frac{1}{\lambda}\right)\norm{\bar{x} -x }.
\]
\end{lema}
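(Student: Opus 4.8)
The plan is to combine two standard estimates: the descent lemma (using $L$-Lipschitzness of $\nabla f$) to control $f(\bar x)-f(x)$, and the variational characterization of the projection to control $f(x)-f(\xs)$. Write $d \coloneqq \bar x - x$, so $\bar x = x + d$ and, since $x,\bar x \in C$, we have $\|d\| \le D$ as well.

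First I would apply the descent lemma along the segment from $x$ to $\bar x$:
\[
f(\bar x) \le f(x) + \nabla f(x)^T d + \frac{L}{2}\|d\|^2.
\]
Next, I would invoke the obtuse-angle (projection) inequality for $\bar x = P_C(x - \lambda \nabla f(x))$: for every $z \in C$,
\[
\big( x - \lambda \nabla f(x) - \bar x \big)^T (z - \bar x) \le 0,
\]
which rearranges to $\nabla f(x)^T(z - \bar x) \ge \frac{1}{\lambda}(x-\bar x)^T(z-\bar x) = -\frac{1}{\lambda} d^T(z-\bar x)$. Taking $z = \xs$ gives $\nabla f(x)^T d = \nabla f(x)^T(\bar x - x) = \nabla f(x)^T(\bar x - \xs) + \nabla f(x)^T(\xs - x)$, and I would use convexity, $f(\xs) \ge f(x) + \nabla f(x)^T(\xs - x)$, together with the projection inequality applied with $z=\xs$ to bound $\nabla f(x)^T(\bar x - \xs) = -\nabla f(x)^T(\xs - \bar x) \le \frac{1}{\lambda} d^T(\xs - \bar x)$.

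Assembling these, $f(\bar x) - f(\xs) \le \nabla f(x)^T(\bar x - \xs) + \frac{L}{2}\|d\|^2 \le \frac{1}{\lambda} d^T(\xs - \bar x) + \frac{L}{2}\|d\|^2$. Now $\|d\|^2 \le D\|d\|$ since $\|d\|\le D$, and by Cauchy–Schwarz $d^T(\xs - \bar x) \le \|d\|\,\|\xs - \bar x\| \le D\|d\|$ since $\xs, \bar x \in C$. Combining, $f(\bar x) - f(\xs) \le \big(\frac{1}{\lambda} + \frac{L}{2}\big) D\|d\| = D\big(\frac{L}{2} + \frac{1}{\lambda}\big)\|\bar x - x\|$, which is the claimed bound.

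The only mildly delicate point is bookkeeping the cross term $\nabla f(x)^T(\bar x - \xs)$ correctly — i.e. making sure the projection inequality is applied with the right sign and the right test point, and that the $\frac{1}{\lambda} d^T(\xs-\bar x)$ term is bounded by $\frac{1}{\lambda}D\|d\|$ rather than something with a $\|d\|^2$ that would be too small to absorb. Everything else is the textbook descent-lemma-plus-projection argument; no step should present a genuine obstacle.
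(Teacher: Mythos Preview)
Your proof is correct and follows essentially the same route as the paper: descent lemma, convexity of $f$, the obtuse-angle projection inequality with test point $\xs$, and then Cauchy--Schwarz together with the diameter bound. The only cosmetic difference is that the paper expands $(\bar x - \xs)^T(x-\bar x) = -\|x-\bar x\|^2 + (x-\xs)^T(x-\bar x)$ before applying Cauchy--Schwarz (and then drops the negative term), whereas you bound $d^T(\xs-\bar x)$ directly; both reach the same inequality.
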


\begin{proof}
Since $\bar{x}$ is the projection of $x - \lambda \nabla f(x)$ onto $C$, by the characterization of projections onto convex sets we have  $(w - \bar{x})^T(x-\lambda\nabla f(x) - \bar{x}) \leq 0$, for all $w \in C$. In particular, as  $\xs\in C$ and $\lambda >0$, we obtain
\begin{equation}\label{eq:minxbar}
    \nabla f(x)^T(\bar{x}-\xs) \leq \frac{1}{\lambda} (\bar{x}-\xs)^T(x- \bar{x}).
\end{equation}
Note also that
\begin{align} 
(\bar{x} - \xs)^T(x - \bar{x}) &= (\bar{x}-x)^T(x- \bar{x}) + (x - \xs)^T(x - \bar{x}) \leq -\norm{x- \bar{x}}^2 + \norm{x-\xs}\norm{x-\bar{x}}.\label{eq:minxbar1}
\end{align}
Therefore, using \eqref{eq:minxbar} and \eqref{eq:minxbar1}, we get, after some manipulations, 
\begin{equation}\label{eq:minxbar2}
\nabla f(x)^T(\bar{x} - x ) \leq \nabla f(x )^T(\xs - x ) + \frac{1}{\lambda }( -\norm{x - \bar{x} }^2 + \norm{x - \xs}\norm{x -\bar{x}}).
\end{equation}
Since  $\nabla f$ is $L$-Lipschitz, 
by using \eqref{eq:minxbar2} and convexity of $f$, we obtain 
\begin{equation}
f(\bar{x} )- f(x )- \frac{L}{2}\norm{\bar{x} -x }^2  \leq f(\xs) -f(x )+\frac{1}{\lambda}( -\norm{x - \bar{x} }^2 + \norm{x - \xs}\norm{x -\bar{x}}),
\end{equation}
which implies that 
\[
f(\bar{x} ) - f(\xs) \leq  \frac{L}{2}\norm{\bar{x} - x }^2 + \frac{1}{\lambda}( -\norm{x - \bar{x} }^2 + \norm{x - \xs}\norm{x -\bar{x}}) \leq D\left(\frac{L}{2} + \frac{1}{\lambda}\right)\norm{\bar{x} -x },
\]
where, in the last inequality, we used the fact that both $x, \xs \in C$, which has, by hypothesis, diameter $D$.
\end{proof}

Now, we recall that for problem \eqref{prob:quad}, $f(x) := \Phi(x) = \frac{1}{2}\norm{Ax - p}^2$ and $C = \Delta_n$. In this case, $L = \norm{A}^2$ and $D = \sqrt{2}$. From the definition of the spectral parameter (see \eqref{eq:spec}), we have $1/\lambda_k \leq L$. Then, we can rewrite  inequality \eqref{eq:auxspgcriterio} from Lemma~\ref{lema:auxspgcriterio} as
\begin{equation}\label{eq:auxspgstop}
\Phi(\bar{x}_k) - \Phi(\xs) \leq \frac{3}{2}LD\norm{\bar{x}_k- x_k},
\end{equation}
by taking $x_k$ as $x$ and $\bar{x}_k$ as  $\bar{x}$. 

Since, in Algorithm~\ref{alg:spg}, $d_k \coloneqq 
\bar{x}_k - x_k$, if one imposes $\epsilon \coloneqq \frac{\norm{A\bar{x}_k - p}}{3LD} \varepsilon R$, inequality \eqref{eq:auxspgstop} implies that 
\(
\| A\bar{x}_k - p\| \le \| A \xs - p \| + \varepsilon R.
\)
If $p \in \convS$, $\xs$, solution of \eqref{prob:quad}, is such that $\norm{A \xs - p} = 0$ and then the last inequality coincides with $\norm{A\bar{x}_k - p} \leq \varepsilon R$. We remark that computing a $\bar{x}_k$ such that the last inequality is satisfied  is similar to finding an $\varepsilon$-solution for the Triangle Algorithm because $A\bar{x}_k \in \convA$. 
On the other hand, if 
\(\label{eq:pstopspg}
	\norm{d_k} \le \frac{\norm{A\bar{ x}_k - p}}{3LD} \varepsilon R
	\)
	and $\norm{A\bar{x}_k - p} > \varepsilon R$,  then $\norm{Ax^* - p} > 0$ and we can conclude that $p \notin \convA$.


\section{Numerical Experiments} \label{sec:numerics}
%
{In order to evaluate the performance of the first-order methods described in Sections \ref{sec:triangle}, \ref{sec:fw} and \ref{sec:spg} when solving CHMP, 
equipped with the stopping criteria from Section~\ref{sec:stop},  
 we present three sets of computational experiments.} 

{In Section~\ref{sec:random}, we consider randomly generated instances of CHMP organized in 4 different scenarios according to the distribution of the points of $\A$ and the relative position of the query point $p$. This way, we can cover different ranges for values of the \emph{geometric constants}, namely $D$, $R$, $\rho$, $\delta_0$, $\Delta$ and $c$, 
and  evaluate  the performance of the algorithms in favorable and unfavorable conditions. 
Section~\ref{sec:lpfeas} discuss how linear programming feasibility problems can be cast as CHMPs under suitable assumptions and compares the studied first-order methods for CHMP with the dual-simplex applied to the original problem. 
Finally, in Section~\ref{sec:mnist} we apply the studied methods to an image classification problem, in order to determine membership of (or distance from) elements in the testing set to the convex hull of the elements in the training set (or its subclasses).}

All the tests were run on an Intel Core i7-8565U 1.80~GHz with 8~GB of RAM running Windows 10 whilst the algorithms were implemented in Matlab R2019a. The codes of our experiments are fully available at \url{https://github.com/rafaelafilippozzi/First-order-methods-for-the-CHMP}.

\subsection{Artificial instances of CHMP}\label{sec:random}
The artificial instances of CHMP were generated following a procedure similar to the one proposed by \cite{KalantariComparacao}.  
Each element of ${\cal A} = \{v_1,\dots,v_n\}$ is randomly generated according to a uniform distribution in the unit ball of $\R^m$ \citep{harman2010}. For  that,  each of the $n$ points of $\A$ is generated as follows: first we sample a  $\hat v \in \Rm$  from the standard normal distribution; then we set $v\in \A$ as $v \coloneqq \sqrt[m]{u} (\hat v / \norm{\hat v})$, where $u$ is sampled from the uniform distribution in $[0,1]$.

The query point $p\in \Rm$ is generated in the following four cases:
\begin{lista}
\item $p\in \convA $ in the relative interior of $\convA$;
\item $ p \in \convA $ with visibility factor close to zero;
\item $p\notin \convA $ away from the boundary;
\item $ p \notin \convA $ with visibility factor close to zero.
\end{lista}

The cases above were selected aiming to evaluate the performance of the considered first-order algorithms in the best and worst cases for the Triangle Algorithm. 
For case (a) we point out the improved iteration complexity given by Theorem~\ref{teo:complexp} for the Triangle Algorithm with ``strong pivots'' (see Assumptions~\ref{assump:pnointerior}, \ref{assump:strong_pivot} and Proposition~\ref{prop: greedychoosebest}). 
 In the cases where the point $p$ is outside $\convA$, the relative position of $p$ affects the values of $\Delta$, $R$ and $\delta_0$ in Theorems~\ref{teo:complex1} and  \ref{teo:complexalternative}. In the cases (b) and (d) we have tried to force the visibility factor $c$ to be close to zero  in order to obtain harder instances for the Triangle algorithms (TA and GT). Details are given in Sections~\ref{sec:caseb}~and~\ref{sec:cased}.

For all the tests in this section we consider the following tolerances,  algorithmic parameters and implementation details:
\begin{listi}
	\item The maximum number of iterations was set to
	$	\texttt{maxit} \coloneqq \min \{ \max \{1000n, 10000 \}, 10^{6} \}
	$.
	
	\item The usual stopping criteria of both Away Step Frank-Wolfe (ASFW) and Spectral Projected Gradient (SPG) have been changed according to the discussions in Section~\ref{sec:stop}.

    \item The tolerance for an $\varepsilon$-approximate solution of CHMP was set to $\varepsilon := \num{e-4}$ for all tested algorithms.
 
	\item For the nonmonotone line search in SPG we used $M=15$ because it obtained the best performance in preliminary tests. Moreover, we set $\lambda_{\min} := 10^{-8}$ and  $\lambda_{\max} := 10^{8}$.
	
	\item 
	As the variables $x \in \Rn$ in the quadratic formulation \eqref{prob:quad} are the coefficients of the convex combination (of the elements of $\A$), 
	we consider the starting point $x_0 = e_i \in \Rm$, where $e_i$ denote the $i$-th canonical vector of $\Rn$ and the index $i$ is such that 
	\begin{align}
	i= \displaystyle \argmin_{1 \leq j \leq n} \{\norm{v_j-p}\}, 
	\end{align}
	so $Ax_0 = p_0$, where $p_0$ is the starting point for Algorithm~\ref{alg:ag}. 
	
	\item In our implementation of the  Triangle Algorithm, at each iteration, we build the full list of values $v_i^T (p_k - p)$ (see Lemma~\ref{lema:pivo}(ii)),  
	find all the pivots (if any) and select one randomly. The random choice of a pivot (instead of a pivot $v_i$ with the smallest index $i$) is inspired by \cite{zhangtrianglerandom} and turns out to be very effective in reducing the number of TA iterations across the four different scenarios  considered in our experiments. Though we could have tested sequentially whether $v_i$, for $i=1,2,\dots,n$, is a pivot and evaluated the products $v_i^T (p_k - p)$ one-by-one, stopping the search as soon as the first pivot is found, we instead ``vectorize'' the search which usually has a superior performance in Matlab than loops.  
	We emphasize that in TA we have used simple pivots (see Lemma~\ref{lema:pivo}) at each iteration rather than strict pivots as in Definition~\ref{def:pivoestrito}. 
\end{listi}



The computational cost per iteration of both TA and GT is $\mathcal{O}(mn)$ arithmetic operations, corresponding to $n$  inner products of vectors in $\Rm$.
In ASFW, we also have  $\mathcal{O}(mn)$ arithmetic operations per iteration to determine Frank-Wolfe and Away directions. SPG is the algorithm that has the highest computational cost per iteration,  $\mathcal{O}(2mn + n \log n)$, since it requires two matrix-vector products and the projection onto the unit simplex $\Delta_n$; see Section~\ref{sec:spg}. 

\begin{remark}
{Concerning the Triangle Algorithm, it was shown in \citep{Kalantari:2019a} that, with a pre-processing cost of ${\cal O}(mn^2)$, the iteration cost of TA can be reduced to ${\cal O}(m+n)$. Such alternative implementation may be interesting when the expected number of TA iterations is much larger than $n$. 
However, in the experiments reported here we do not use pre-processing.}
\end{remark}

 In the experiments described in this  subsection we fix the dimension as $m=100$ and vary the number of points $n$ in $\A$ from $500$ to $10^5$. For each value of $n$, 10 random instances are generated. Tables in \ref{sec:apa} show the \emph{average} number of iterations required by each algorithm.

\subsubsection{Case(a): $p$ in the relative interior of $\convA$}\label{sec:casea}

To ensure that the query point lies in $\textrm{relint} (\convA)$, we select it as $p=0\in \Rm$, the center of the unit ball. Since every $v_j \in \A$ originates from a uniform distribution on the unit ball, whenever  the number $n$ of points in $\A$ is sufficiently large, it is rather true that  $p\in \convA$. In addition, it is also likely that $B_\rho(p) \subset \convA$, for $\rho>0$ away from zero and thus, in this case,  Assumption~\ref{assump:pnointerior} is satisfied. 

In this scenario, we expect GT to perform better than TA. Bear in mind, from Theorems \ref{teo:complex1} and \ref{teo:complexalternative}, that the iteration complexity of TA is given by 
\[
\mathcal{O}\left(\min \left\{\frac{1}{\varepsilon^2}, \frac{1}{c}\ln \frac{\delta_0}{\varepsilon R}\right\}\right), 
\]
where $\delta_0= d(p,p_0), R:=\max \{d(v_i,p) \mid v_i\in \A\}$ and $c$ is from Assumption~\ref{assump:cfatorvisibi}. However, in view of Proposition~\ref{prop: greedychoosebest}, Assumption~\ref{assump:strong_pivot} is satisfied by GT and its iteration complexity is given by Theorem~\ref{teo:complexp}:
\[
\mathcal{O}\left( \frac{R^2}{\rho^2}\ln \frac{\delta_0}{\varepsilon R}\right).
\]

Figure~\ref{fig:casea} shows the performance of the algorithms in terms of running time, while the number of iterations is listed in Table~\ref{tab:iterationspin} (in the appendix). As anticipated, GT requires fewer iterations than TA to retrieve $p_{\varepsilon}$ such that $d(p_{\varepsilon},p) \leq \varepsilon R$. Moreover, when $n$ is large, GT also achieves the best performance in terms of running time, followed closely by SPG and ASFW. 
Even though the cost per iteration of SPG is higher, it required only 12 iterations on average while GT and ASFW used up to 118 iterations.

\begin{figure}
	\centering
	\includegraphics[scale=0.53, trim=100 200 100 200, clip]{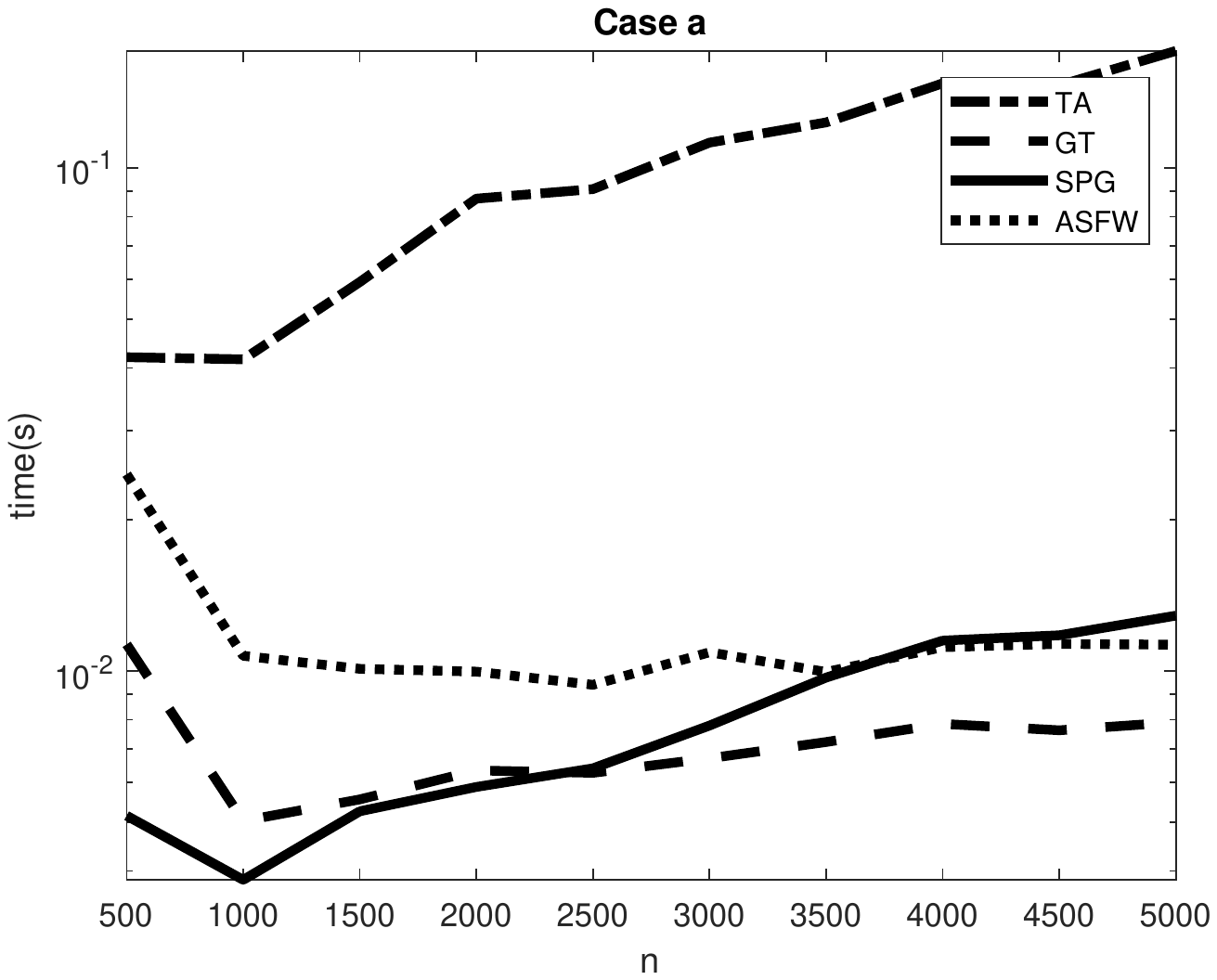}
    \hfill 
	\includegraphics[scale=0.53, trim=100 200 100 200, clip]{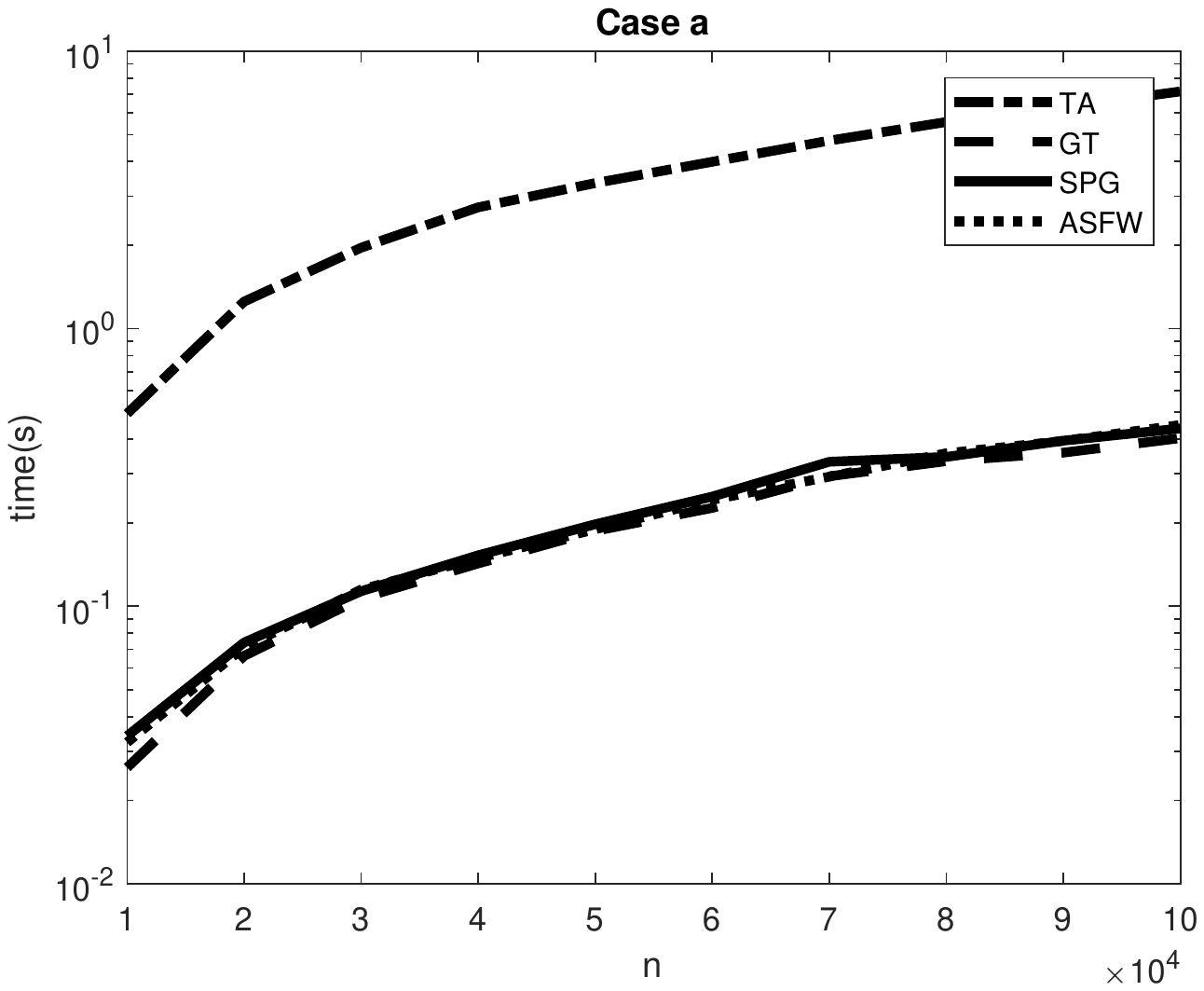}
	\caption{Case (a). Running times (in seconds) for dimension $m=100$ and increasing $n$, the number of points in $\A$. }\label{fig:casea}
\end{figure}

\subsubsection{Case (b): $p \in \convA$ with visibility factor close to zero}\label{sec:caseb}

Here, we generate the query point $p \in \convA$ on the boundary of the convex hull, similar to Example~\ref{exe:quadrado}.  For that, first we determine two points of $\A$, say  $v_{\ell}$ and $v_{q}$, that have the highest values of the linear functional $\psi(v) = e^T v$: since $\convA$ is a polytope and from the way the points of $\A$ were randomly generated, we expect, for a sufficiently large $n$, that $v_{\ell}$ and $v_q$ are extreme points of $\convA$ (the maximum of $\psi(v) $ is achieved in at least one of these points).
Next, we set $p = v_{\ell}/2 + v_{q}/2$. 
In addition, to prevent $v_{\ell}$ and $v_{q}$ from being points of $\A$ closest to $p$, we add to $\A$ a new point $v_{s}$ such that $d(v_s,p) < d(v_{\ell},v_{q})/2$. More precisely,  
\[
v_s = p - \dfrac{\beta}{2}\dfrac{\| v_{\ell} - v_q \|}{\| p \|}p,
\]
where $\beta=0.9$. This last step is necessary because if $p_0 = v_q = \mbox{argmin} \{ d(v,p) \mid v \in \A \}$, for example, $v_{\ell}$ is a strict pivot since $$(p_0-p)^T(v_q-p) = (v_{\ell}-p)^T(v_q-p) = \frac{1}{4}(v_{\ell}-v_q)^T(v_q-v_{\ell})\leq 0.
$$ 
In this situation, GT  could select $v_{\ell}$ as pivot in the first iteration and $p$ would be retrieved after the exact line search. This actually happened when we did not add $v_s$.

The instances considered in this subsection are more difficult for TA and GT algorithms because the upper bound $1/\sqrt{1+c}$ in \eqref{eq:hip1} can be quite close to 1. 
Since $p = \frac{1}{2}(v_q + v_{\ell})$, either $v_q$ or $v_{\ell}$ is a pivot for any $p' \in \convA \setminus \{p\}$. In fact, otherwise
\[
    (p'- p)^T (v_{\ell} - p)  > \frac{1}{2} \norm{p'-p}^2  \quad \mbox{ and } \quad 
    (p'- p)^T (v_{q} - p)  > \frac{1}{2} \norm{p'-p}^2 
\]
would imply
\[
0 = (p'- p)^T (v_{\ell} + v_q - 2p) > \norm{p'-p}^2, 
\]
a contradiction. Moreover, if $v_{\ell}$ was the pivot chosen to obtain $p_k$ at the iteration $k-1$, from the exact line search $(v_{\ell} - p_k)^T (p-p_k) = 0$, thus $v_{\ell}$ is not a pivot at iteration $k$, which implies that $v_q$ is. Therefore, starting at $p_0 \in \convA \setminus \{v_{\ell}, v_q, p \}$, we could have in each iteration of TA (or GT)  $p_k$ as a convex combination of $p_0$, $v_{\ell}$ and $v_q$, with $v_q$ and $v_{\ell}$ alternating as pivots. This phenomenon is similar to the one of Example~\ref{exe:quadrado}.
Hence,
since $1/c$ can be relatively large,
we expect to observe the complexity of the
Theorem~\ref{teo:complex1} instead of the Theorem~\ref{teo:complexalternative} for TA and GT (in contrast to case (a)). 
Nevertheless, the assumptions of Theorem~\ref{teo:linear_asfw}, concerning ASFW, are still satisfied. 

Both Triangle and Greedy Triangle algorithms reached the maximum number of iterations and, therefore, 
are not reported in Table~\ref{tab:iterationspin} and Figure~\ref{fig:caseb}. 
On average,  the ASFW converged in 12 iterations and 7 of such iterations have used the away direction. 
From Figure~\ref{fig:caseb} we see that ASFW outperformed SPG. 
This happened because, although SPG achieved the desired precision in a maximum of $10$ iterations, on average (see Table~\ref{tab:iterationspin}), 
its iteration cost is higher than ASFW. 
%
\\

\begin{figure}
\centering
	\includegraphics[scale=0.53, trim=100 200 100 200, clip]{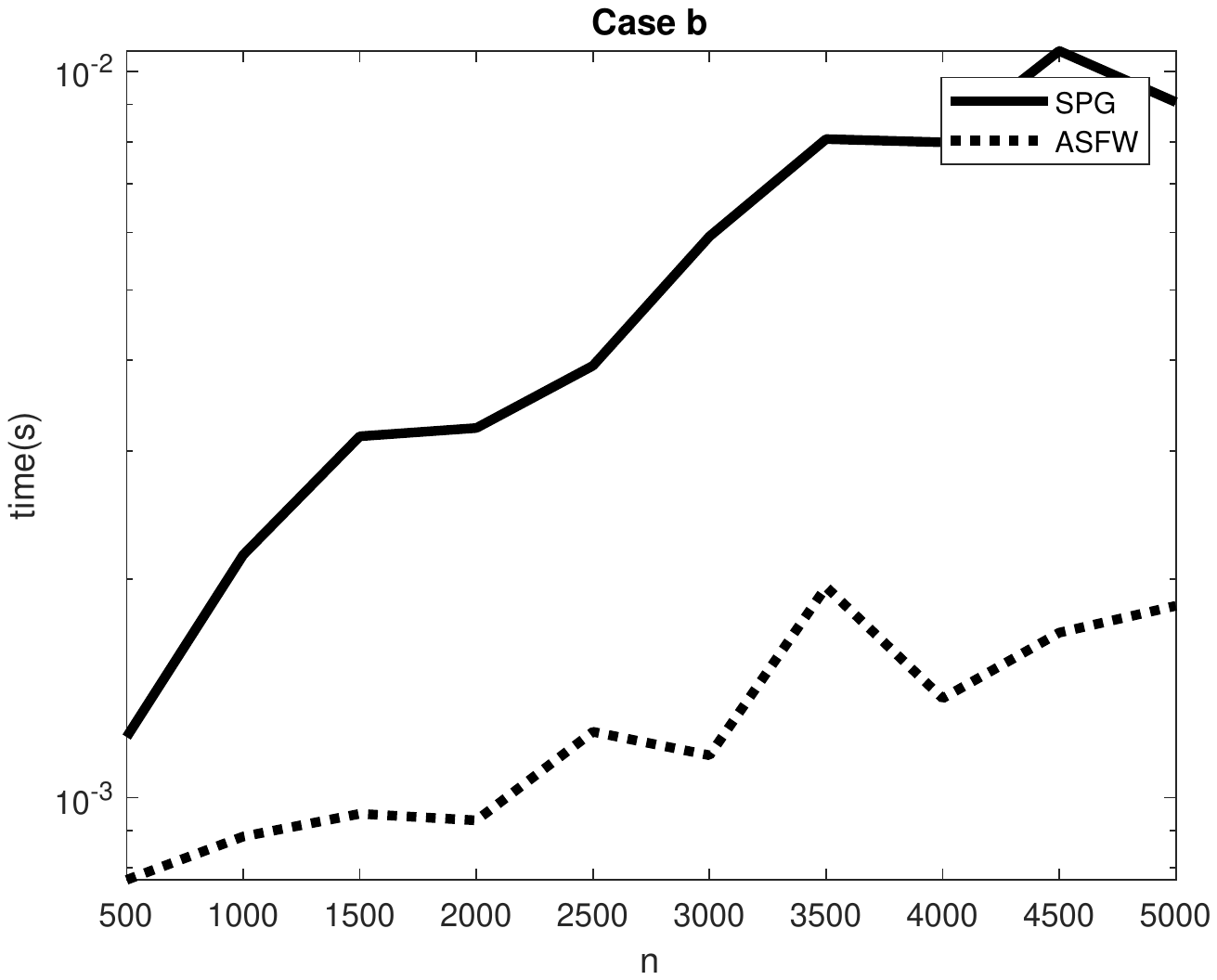} \hfill 
	\includegraphics[scale=0.53, trim=100 200 100 200, clip]{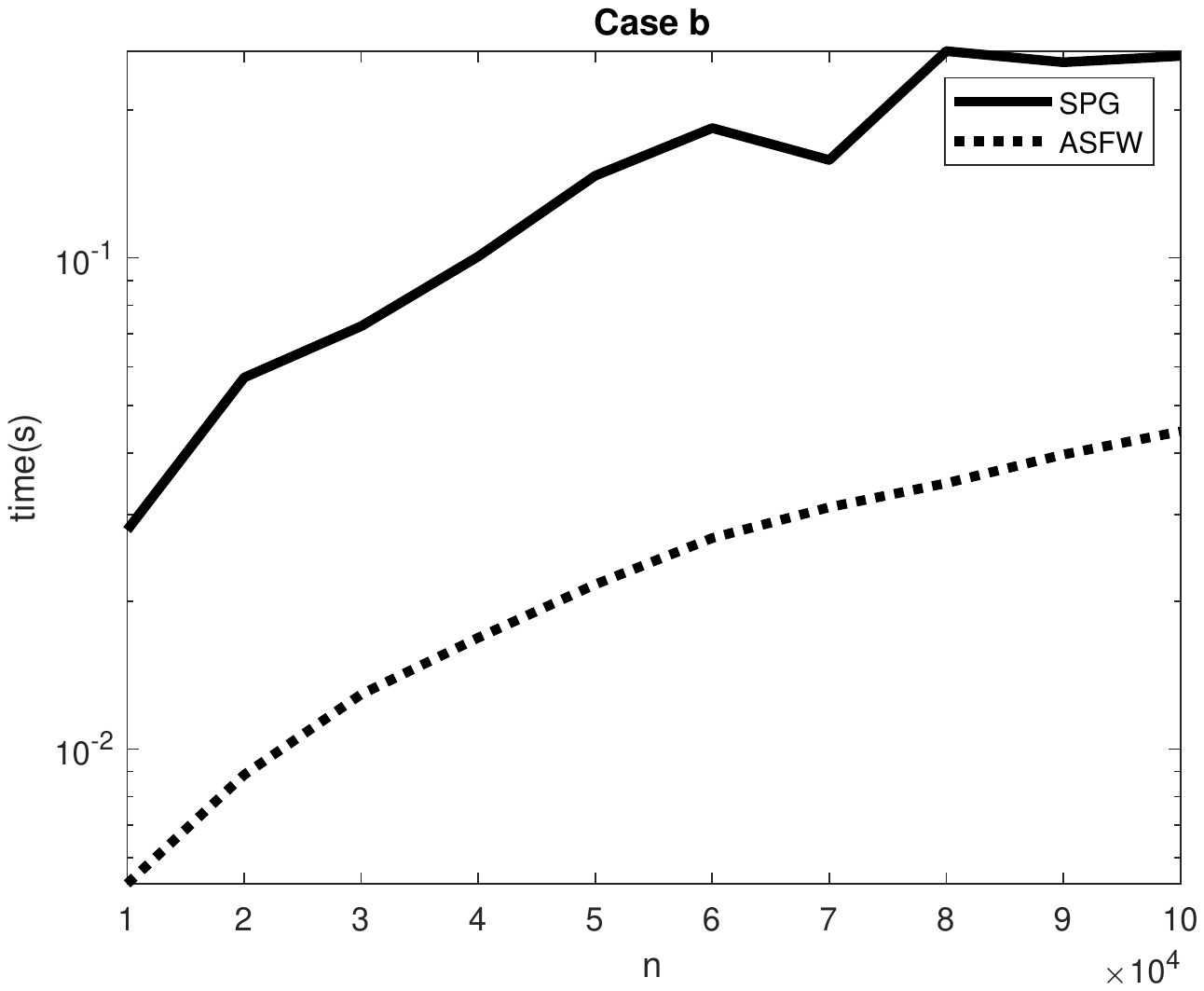}
	\caption{Case (b). Running times (in seconds) for dimension $m=100$ and increasing $n$, the number of points in $\A$. }\label{fig:caseb}
\end{figure}


\subsubsection{Case (c): $p \notin \convA$ and  away from the boundary}\label{sec:casec}
Now, the query point $p$ is located outside $\convA$. We generate  $p \in \Rm$ as in Case (b), followed by a dilation: it is multiplied by $1.5$. Differently from the previous case, we do not add another point $v_s$ to $\convA$ to prevent $p_0$ from being $v_q$ or $v_{\ell}$. Note that, in this way, as $\A \subset B_1(0)$, $p \notin \convA$ and $p$ is sufficiently far from the boundary. This ensures that $\Delta := \min \{ d(v,p) \mid v \in \convA \}$ is sufficiently greater than zero.

As can be seen in Table~\ref{tab:iterationspout} (\ref{sec:apa}), all the algorithms have found a witness in no more than $4$ iterations. Bear in mind that Theorem~\ref{teo:complex1} states that the iteration complexity, in this case, does not depend on the tolerance $\varepsilon$, but on the constants $\Delta$ and $R$; it is not hard to show that the ratio $R/\Delta$ approaches 1 as $\Delta$ increases. Figure~\ref{fig:casec} depicts the running times for this scenario.

Recall from Theorems \ref{teo:complex1} and \ref{teo:complexalternative} that the iteration complexity of TA and GT, when $p \notin \convA$, is
\[\label{eq:GT_TAcost}
\mathcal{O}\left(\min \left\{\frac{R^2}{\Delta^2}, \frac{1}{c}\ln \frac{\delta_0}{\Delta}\right\}\right), 
\] 
where $c$ is from \eqref{eq:hip1} and depends on the choice of pivot. Since GT looks for a pivot $v$ that minimizes $v^T (p_{k}-p)$, usually $\sin \theta_k$ will be smaller for GT than TA (see Figure~\ref{fig:pivoestrito}) and thus, we expect a larger constant $c$ for GT than TA which explains the lower number of iterations for the former.

For the instances considered in this subsection, we observed $1.58 \leq R \leq 1.8 $,  $0.32 \leq \Delta \leq 0.41$ and $ 0.68 \leq \delta_0 \leq 0.81$. Moreover, 
by evaluating $\sin{\theta_k}$ throughout the iterations, we  deduced that if the visibility factor $c$ exists, its value must be less than $0.02$. 

\begin{figure}
	\centering
	\includegraphics[scale=0.53, trim=100 200 100 200, clip]{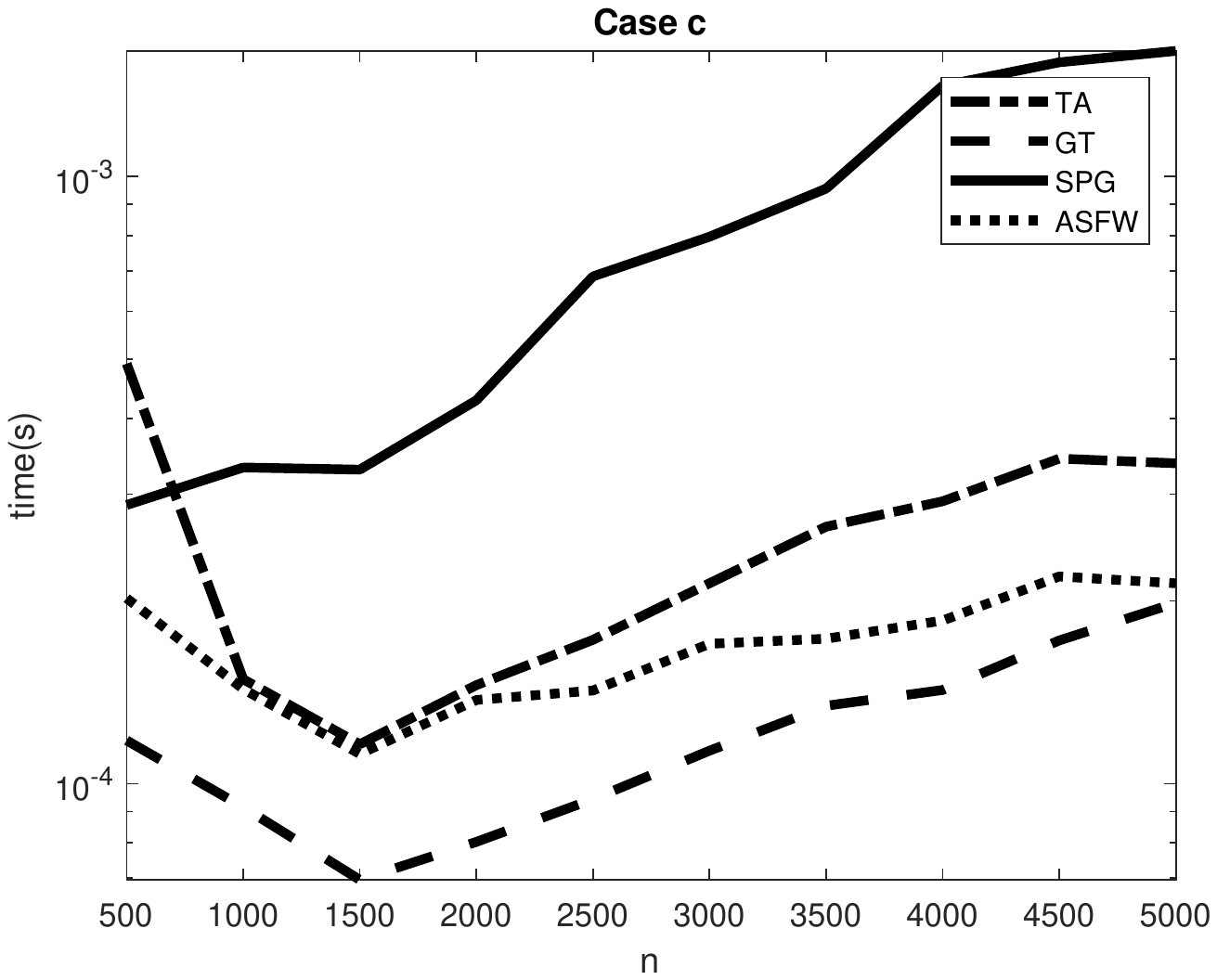} \hfill
	\includegraphics[scale=0.53, trim=100 200 100 200, clip]{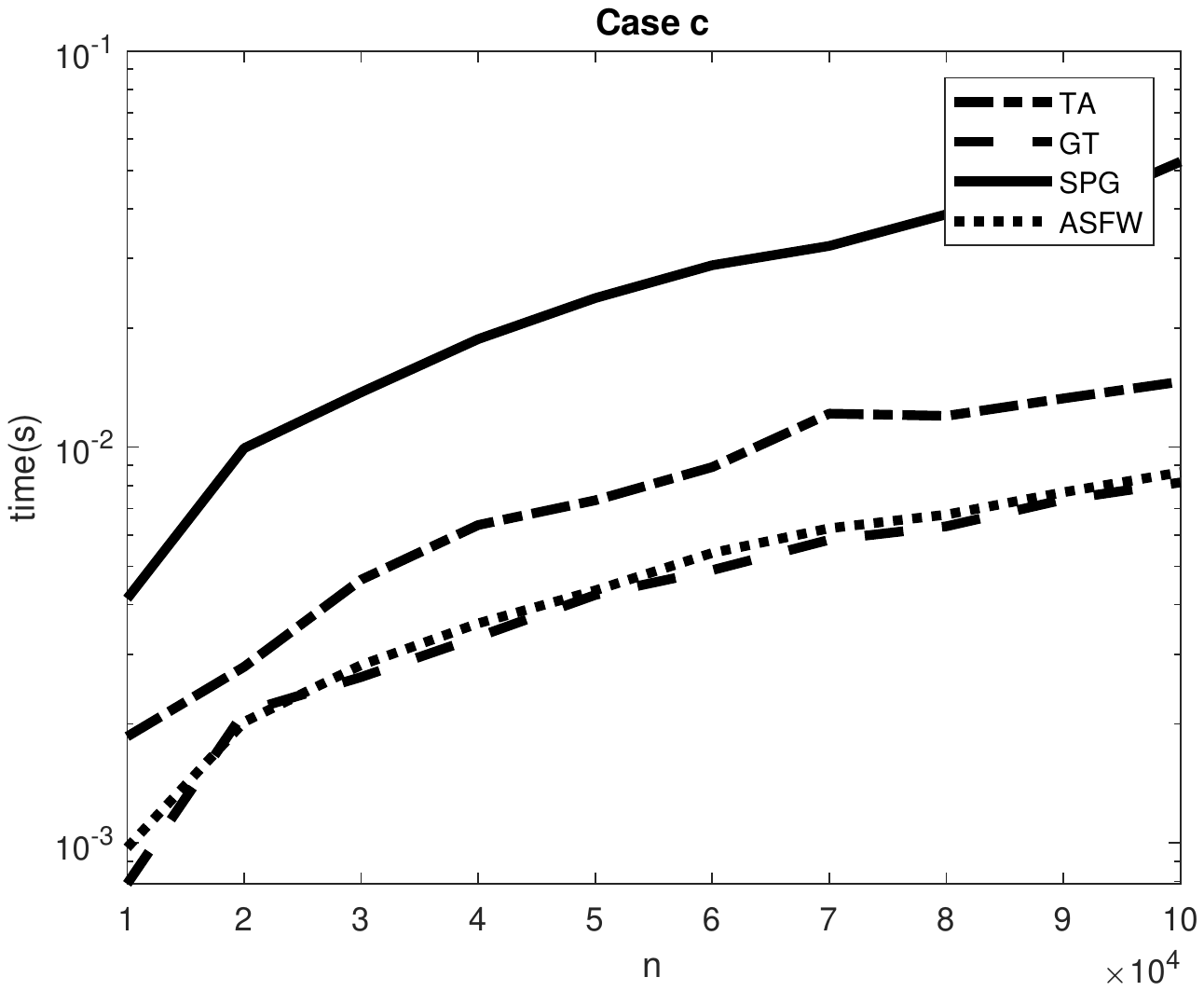}
	\caption{Case (c). Running times (in seconds) for dimension $m=100$ and increasing $n$, the number of points in $\A$. }\label{fig:casec}
\end{figure}

In addition, ASFW and GT perform better than  SPG. The gap in running time between ASFW (and GT) and SPG  increases as the number of  points increases. Although the number of iterations required by SPG is similar to the others, remember that its cost per iteration is higher due to the projection cost of ${\cal O}(n \log n)$.

We remark that in these instances $p_0$, the point of $\A$ closest to $p$, can be either $v_q$ or $v_\ell$. As argued in the previous section,  if $p_0=v_q$, then $v_{\ell}$ is a strict pivot and can be chosen by GT and ASFW in the first iteration. Since, for such instances, the projection of $p$ onto $\convA$ is a convex combination of $v_q$ and $v_\ell$, these two algorithms can find it at the first iteration, explaining the results in Table~\ref{tab:iterationspout}.

\subsubsection{Case (d): $p \notin \convA$ with visibility factor close to zero.}\label{sec:cased}

Motivated again by Example~\ref{exe:quadrado}, now we generate $p$ as in case (b) but we multiply its coordinates by $1.01$. Here we add $v_s$, as in case (b) and, as explained in Section~\ref{sec:caseb}, we expect to obtain instances with visibility factor close to zero. 
In fact, by analyzing the TA iterations for this set of instances, we observed that if a positive factor $c$ exists its value will be less than  $\mathcal{O}(10^{-4})$. 
Moreover, for the instances generated in this case, we have $0.007 \leq \Delta \leq 0.008$, $1.34 \leq R \leq 1.54$, and $ 0.57 \leq \delta_0 \leq 0.75$. 
Thus, from Theorem~\ref{teo:complex1} (and Theorem~\ref{teo:complexalternative}) in the worst case TA may spend  $\mathcal{O}(10^4)$ iterations to find a witness.

As can be seen in Table~\ref{tab:iterationspout}, the average number of TA and GT iterations is very high in the comparison to the other algorithms and this translates to the running times shown in Figure~\ref{fig:cased}.  
We remark that the number of SPG and ASFW iterations also increased in this case when compared to case (c). Note that the Triangle Algorithm takes practically the same time as GT. 

Remarkably, the away directions enabled ASFW  to avoid the zigzag phenomenon seen in the Example~\ref{exe:quadrado}: it was the fastest algorithm, followed by SPG.

\begin{figure}
	\centering
	\includegraphics[scale=0.53, trim=100 200 100 200, clip]{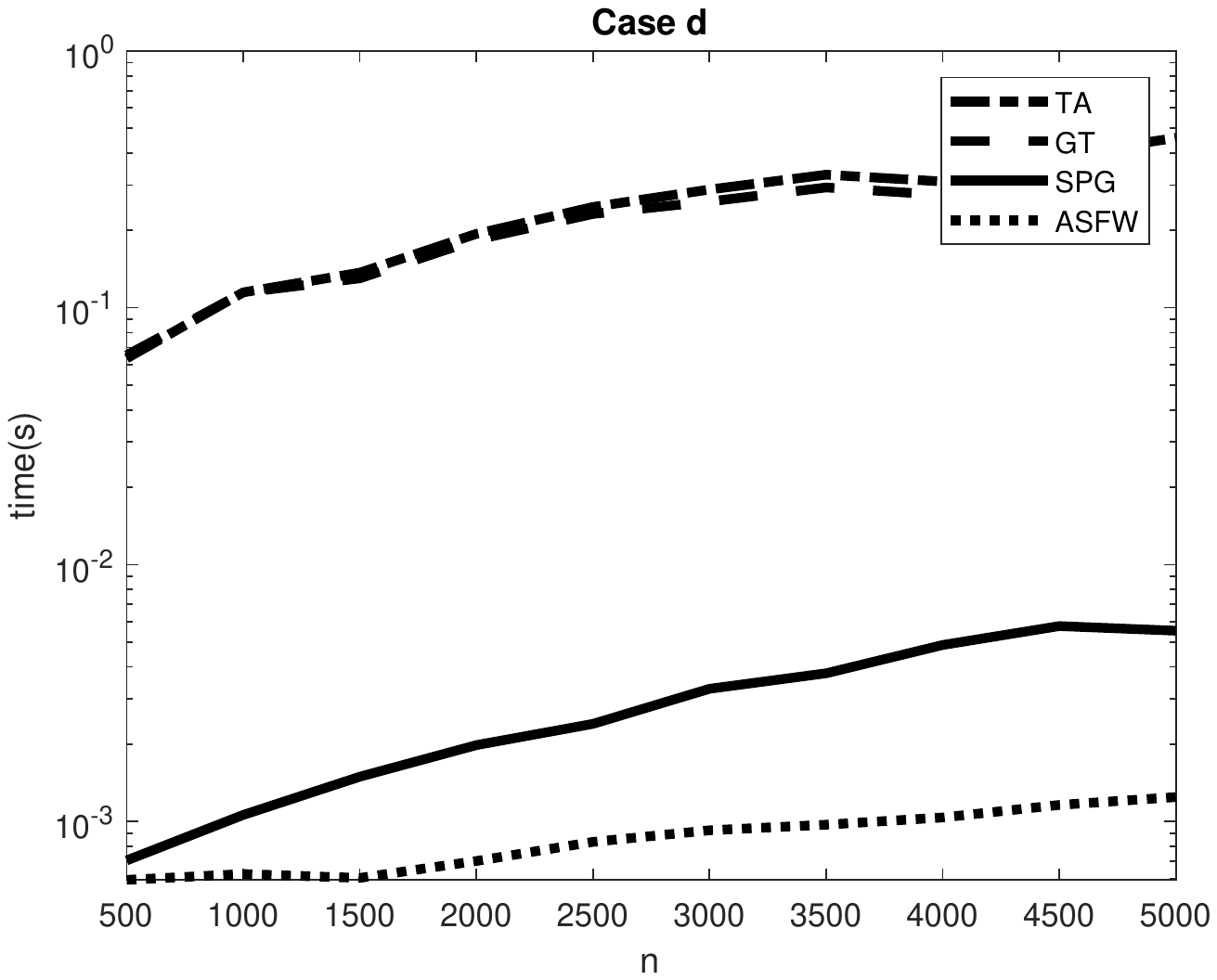} \hfill 
	\includegraphics[scale=0.53, trim=100 200 100 200, clip]{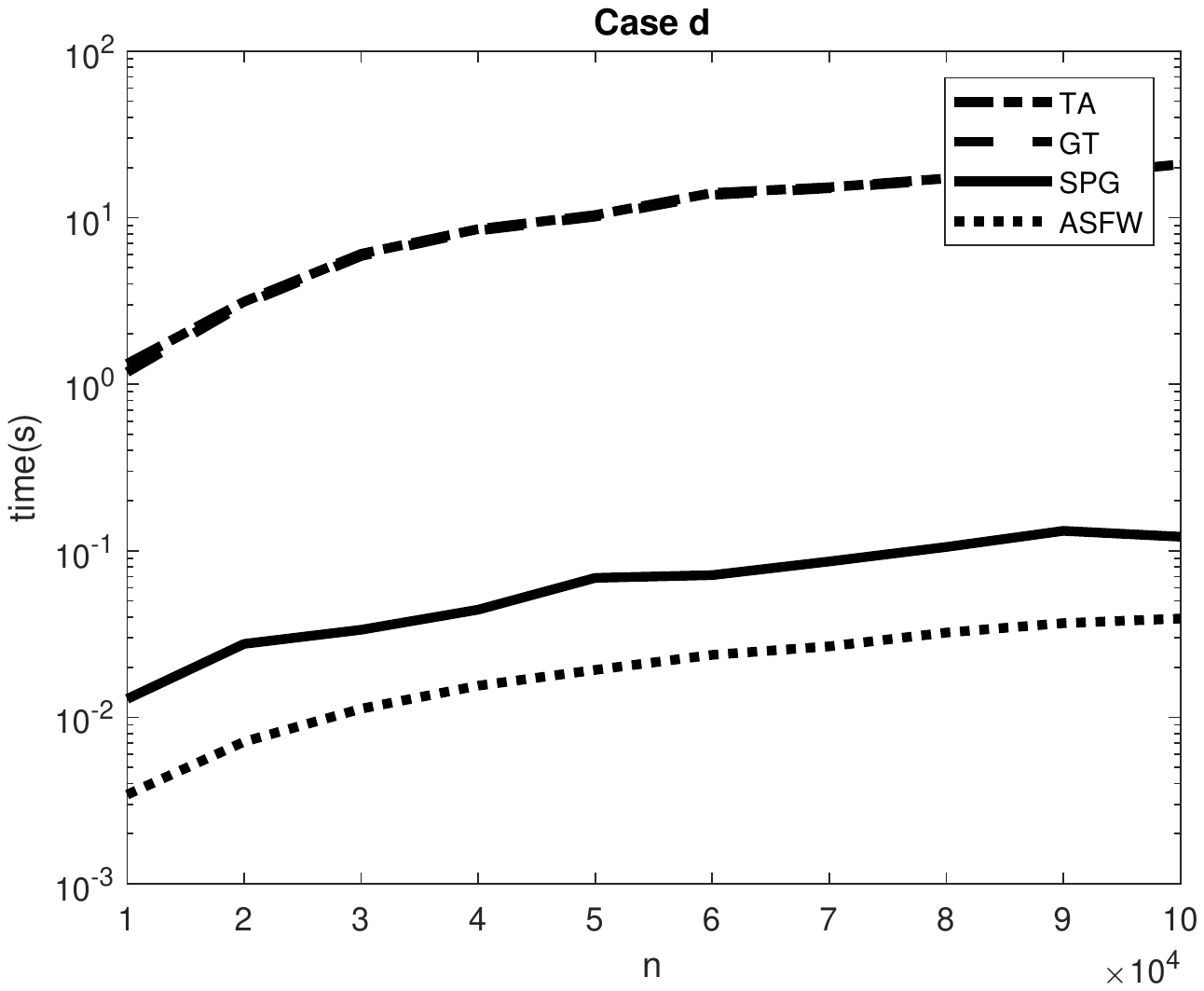}
	\caption{Case (d). Running times (in seconds) for dimension $m=100$ and increasing $n$, the number of points in $\A$. }\label{fig:cased}
\end{figure}


\subsection{Linear Programming Feasibility Problem}\label{sec:lpfeas}
The convex hull membership problem is also related to the linear programming feasibility problem.
The feasible set of a linear programming problem can be  given by $\Omega = \{x \in \Rn \mid Ax=b, x \geq 0\}$, with matrix $A=[a_1\ a_2 \ \cdots\  a_n],$ and vectors  $ b, a_i\in \mathbb{R}^m$. We are interested in elements of $\Omega$ with bounded norm, let us say $\| x \|_1 \leq N$. In other words, we would like to decide whether the intersection of $\Omega$ with the half-space defined by $H_N = \{ x \in \Rn \mid e^Tx \leq N \}$, for a given $N>0$, is non-empty. 
This (more specific) LP feasibility problem can be cast as 
\begin{equation} \label{eq:sistemafacPL}
\begin{aligned} \begin{pmatrix}
A &  0 & -b \\
e^T & 1 & -N \\
0^T & 0 & 1
\end{pmatrix}\begin{pmatrix}
\alpha \\
\beta \\
\gamma \end{pmatrix} = \begin{pmatrix}
0 \\
0 \\
\frac{1}{N+1} \end{pmatrix}, \\
e^T\alpha + \beta + \gamma = 1, \\
\alpha, \beta, \gamma \geq 0,
\end{aligned}
\end{equation}
which is equivalent to the CHMP $p \in \conv{(\tilde{\A})}$, where $p=\left(0^T,0,\frac{1}{N+1}\right)^T \in \R^{m+2}$ and the elements of $\tilde{\A}$ are the columns of the coefficient matrix in \eqref{eq:sistemafacPL}. It is not hard to show that $\Omega \cap H_N$ is nonempty if and only if $p \in \conv{(\tilde{\A})}$.

\begin{remark}
Let $p_k \in \conv{(\tilde{\A})}$ be given such that $d(p_k,p) = \norm{p_k - p} \leq \varepsilon R$, where $R$ is the diameter of $\conv{(\tilde{\A})}$, from \eqref{eq:sistemafacPL}. Then,
\(\hat{x} := \left(\frac{\alpha_1}{\gamma}, \dots, \frac{\alpha_n}{\gamma} \right)^T
\)
satisfies  $\norm{A \hat{x} - b} \leq \varepsilon R/\gamma \eqqcolon \varepsilon'$, $\abs{e^T \hat{x} + \beta/\gamma -N} \leq \varepsilon'$, $\abs{\gamma-\frac{1}{N+1}} \leq \varepsilon R$ and $\hat{x} \geq 0$. 
\end{remark}

We consider instances of the LP feasibility problem
\begin{equation}\label{eq:lpfeas}
Ax=b, \quad x \geq 0 \quad \textrm{and} \ \quad e^T x \leq N, 
\end{equation}
and solve them as CHMPs by using TA, GT, SPG and ASFW, using the stopping criteria described in Section~\ref{sec:stop}. We  compared as well their performances against the Matlab LP solver {\tt linprog} (which is an implementation of the Dual-Simplex algorithm) applied to  \eqref{eq:lpfeas}. 

For each instance, 
the columns of $A$ were 
sampled from the uniform distribution on the unit sphere centered at $e \in \R^m$ (such that $a_{ij} \geq 0$). 
To generate feasible instances of \eqref{eq:lpfeas}, we select a random vector $x \in \Rn$, where each entry is  sampled from the uniform distribution in $(0, 1)$, and compute $b$ as $b = Ax$. 
Since the sum of $n$ independent random variables with uniform distribution has expected value $n/2$ and variance $n/12$, and in these experiments we consider $n\leq 2000$,  to ensure $e^T x \leq N$, we set $N=1200$. 
For infeasible instances, we use the same procedure but at the end we multiply the first coordinate of $b$ by $-1$. In the following tables, for each pair $m$ and $n$ we considered 10 random instances.

For {\tt linprog}, we test the feasibility of \eqref{eq:lpfeas} considering a constraint violation tolerance of $10^{-3}$ (the largest value allowed by {\tt linprog}) and we also disabled  preprocessing procedures. For the other algorithms applied to the equivalent CHMP formulation, we use the same parameters of the experiments in Section~\ref{sec:random} except for the tolerance $\varepsilon$, which here takes values in $\{10^{-6}, 10^{-7}\}$, and the maximum number of iterations, which is  set to $10^6$. In the SPG implementation, based on preliminary experiments, we set $\lambda_{\min} = 10^{-10}$ and $\lambda_{\max} = 10^{10}$, and  we use $M=60$ for the nonmonotone line-search. 
It is worth mention that, for the instances we generated,  the constant $R$, that depends on the problem geometry, is around $10^{3}$.

The running times and  respective iterations count are shown in Tables~\ref{tab:timeiteraothesfeasible}, \ref{tab:linprogtime} and \ref{tab:timeiteraothesinfeasible}. In Table~\ref{tab:timeiteraothesfeasible}, we report time and iterations of TA, GT, ASFW and SPG for feasible problems,  varying dimensions and tolerance $\varepsilon$.  Table~\ref{tab:linprogtime} shows the performance of {\tt linprog} (for which the constraint violation tolerance is fixed).

Note that even  GT and ASFW, which presented the best performances in the experiments of Section~\ref{sec:random},  struggled in these experiments, requiring many iterations and consequently presenting higher running times. 
This was somehow expected because Theorem~\ref{teo:complex1} states the iteration complexity of TA and its variants as $O(\varepsilon^{-2})$ and now $\varepsilon$ is three orders of magnitude smaller than in the previous section.

As for ASFW, we remark that the linear convergence rate appearing in  equation~\eqref{eq:convasfw} depends on $\mu$, $L$, $D$, $\Omega_C$ and the dimension. In these experiments, $C := \conv{\tilde{\A}}$ and the dimension is $m+2$.  The objective function of  \eqref{prob:proj} yields that $\mu = L =1$. From \eqref{eq:sistemafacPL}, 
we deduce that 
$D \geq N+1$ and $\Omega_C \leq 1$. Therefore,
\[
\dfrac{\Omega_C}{D(m+3)} \leq \dfrac{1}{(N+1)(m+3)}.
\]
Since, in our experiments, $N=1200$ and $m=O(10^2)$, the above ratio is smaller than $10^{-5}$ and by replacing it in \eqref{eq:convasfw} we see that convergence factor is very close to $1$, justifying the slow convergence of ASFW for these instances. 

\begin{table}
\centering\small
\caption{CPU time (in seconds) and iterations count for feasible problems. Best times are in bold.}\label{tab:timeiteraothesfeasible}
\begin{tabular}{lrrccccccccc}\toprule
\ & \textbf{} &\textbf{} &\multicolumn{2}{c}{\textbf{TA}} &\multicolumn{2}{c}{\textbf{GT}} &\multicolumn{2}{c}{\textbf{ASFW}} &\multicolumn{2}{c}{\textbf{SPG}} \\  \cmidrule(lr){4-5}  \cmidrule(lr){6-7}  \cmidrule(lr){8-9}  \cmidrule(lr){10-11}
\ & \textbf{$m$} &\textbf{$n$} &\texttt{time } &\texttt{iter} &\texttt{time } &\texttt{iter} &\texttt{time } &\texttt{iter} &\texttt{time } &\texttt{iter} \\\midrule
\multirow{4}{*}{$\varepsilon=10^{-6}$} & 50 &200 &0.5137 &86491.4 &0.3272 &55952.9 &0.1507 &6286.7 &\textbf{0.0053} &56.8 \\
\ & 50 &2000 &0.2669 &10318 &0.0809 &4472.4 &0.0869 &2190.3 &0.2971 &54.4 \\
\ & 100 &500 &1.0028 &74248.7 &0.5599 &46952.8 &1.0131 &27998.3 &\textbf{0.0083} &122 \\
\ & 200 &2000 &1.5370 &29734.1 &0.8193 &17676.6 &1.2437 &18007.5 &\textbf{0.1833} &268.5 \\\midrule
\multirow{4}{*}{$\varepsilon=10^{-7}$} & 50 &200 &2.1938 &330132 &1.2349 &189164.6 &0.7452 &26773.2 &\textbf{0.0080} &58.5 \\
\ & 50 &2000 &0.6298 &24588.7 &0.1503 &8173.1 &0.2172 &5847.9 &0.5040 &55.9 \\
\ & 100 &500 &4.0366 &313218.6 &2.1494 &186011.7 &4.0959 &112368.9 &\textbf{0.0148} &125.2 \\
\ & 200 &2000 &13.7279 &245975.6 &1.8809 &39395.9 &8.8099 &123920.4 &\textbf{0.1923} &266.2 \\\bottomrule
\end{tabular}
\end{table}

\begin{table}
\centering\small 
\caption{Time (in seconds) for  \texttt{linprog}}\label{tab:linprogtime}
\begin{tabular}{ccrrr}\toprule
\bf{$m$} & \bf{$n$} & \bf{feasible} & \bf{infeasible} \\
\cmidrule(lr){1-2}
\cmidrule(lr){3-4}
50 &200 &0.0162 &0.0103 \\
50 &2000 &\textbf{0.0294} &0.0189 \\
100 &500 &0.0297 &0.0128 \\
200 &2000 &0.2180 &0.0424 \\
\bottomrule
\end{tabular}
\end{table}

\begin{table}
\centering \small
	\caption{CPU time (in seconds) and iterations count for infeasible problems.}\label{tab:timeiteraothesinfeasible}
	\begin{tabular}{cclclclclcl}\toprule
	&\textbf{} &\multicolumn{2}{c}{\textbf{TA}} &\multicolumn{2}{c}{\textbf{GT}} &\multicolumn{2}{c}{\textbf{ASFW}} &\multicolumn{2}{c}{\textbf{SPG}} \\ \cmidrule(lr){3-4} \cmidrule(lr){5-6}\cmidrule(lr){7-8} \cmidrule(lr){9-10}
	\textbf{$m$} &\textbf{$n$} &\texttt{time} &\texttt{iter} &\texttt{time} &\texttt{iter} &\texttt{time } &\texttt{iter} &\texttt{time } &\texttt{iter} \\\midrule
	50 &200 &0.0011 &21.1 &\bf{0.0003} &13 &0.0084 &253.9 &0.0018 &3.5 \\
	50 &2000 &0.0016 &35 &\bf{0.0007} &33.2 &0.0015 &33.2 &0.0352 &11.3 \\
	100 &500 &0.0014 &56.9 &\bf{0.0009} &45.4 &0.0015 &45.4 &0.0053 &3.7 \\
	200 &2000 &0.0053 &109.2 &\bf{0.0045} &106.8 &0.0060 &106.8 &0.0513 &7.6 \\\bottomrule
	\end{tabular}
	\end{table}

Table~\ref{tab:timeiteraothesinfeasible} shows the results (on average) for infeasible problems. We remark that in infeasible problems, apart from {\tt linprog}, the  algorithms stop when a witness is found and this depends only on the geometry of the problem, not on the tolerance $\varepsilon$. 
For the instances considered in this section we observed that $R$ is around $10^3$ and $0.14\leq\Delta \leq 0.99$.
We can see that the algorithm with the best performance with respect to time in feasible cases was SPG, followed by {\tt linprog}. For infeasible problems, GT found a witness faster than the other benchmarked algorithms. 

\subsection{An example of CHMP in image classification}\label{sec:mnist}
{The MNIST database is a large database of handwritten digits, which is a very popular dataset for validating image classification algorithms \citep{lecun1998gradient}. 
It contains 60,000 samples of $28\times 28$ grayscale images in its training set, and 10,000 images in its testing set. 
In the interesting study of \cite{yousefzadeh2021} it was shown that, for the MNIST dataset, all test points are considerably outside the convex hull of the set of training points. Furthermore, in such a case, the author suggested the following procedure for classification: 
given a testing point, we estimate its distance to the convex hull of each class\footnote{For MNIST, the classes are $0,1,\dots,9$.} and classify it according to the smallest of the distances. \citet{yousefzadeh2020} and \citet{yousefzadeh2021} reported  an accuracy of $98.5\%$  using this procedure.}

{Inspired by these studies, we apply the algorithms discussed in this work to obtain an estimate for the distance from a test point to the convex hull of (subclasses of) the training set. The difference from the work of \citet{yousefzadeh2021} is that we do not solve problem~\eqref{prob:quad} exactly, but stop the algorithms as soon as a witness is found. Recall from \eqref{eq:witproj} that the distance from a testing point to a witness is not greater than twice the distance from this test point to the convex hull. Thus, we wonder how the use of this estimate would impact the results/conclusions of  \citet{yousefzadeh2021}.}

{Firstly, we consider $\A \subset \mathbb{R}^{784}$ as the training set (with 60,000 vectorized images). 
Then, for each of the $10,000$ test points, we solve the corresponding CHMPs with the same algorithms\footnote{Here, $M=3$ is used in the nonmonotone line search for SPG.} tested in the Section~\ref{sec:random}.  As in \citep{yousefzadeh2021}, we found that all elements of the test set are outside $\convA$. 
Similarly to Section~\ref{sec:casec}, the fastest algorithms to obtain a witness for the 10,000 test points were GT and ASFW, which took approximately a total time of 12 and 13 minutes, respectively. The slowest was SPG which took 114 minutes.} 
{In order to obtain the exact distance from a testing point to the convex hull we also considered a classic version of SPG (without the stopping criteria of Section~\ref{sec:stop}), which we call PROJ. PROJ took a total time of almost 9 hours to find the projection of all test points onto $\convA$.}

{To compare the distance estimates with the exact distances to the convex hull of the training set, we present in Figure~\ref{fig:distmnist} 
a graph with the distance (divided by $255$ which is the largest pixel value) from all test points to their respective witnesses (depending on the method used) 
and a histogram to compare the (estimate) distance distributions for TA and PROJ. As expected, the distances between the witnesses and the test points do not exceed twice the distance to the projection.    
}

\begin{figure}
	\centering
		\includegraphics[scale=0.31,trim=0 100 0 100,clip]{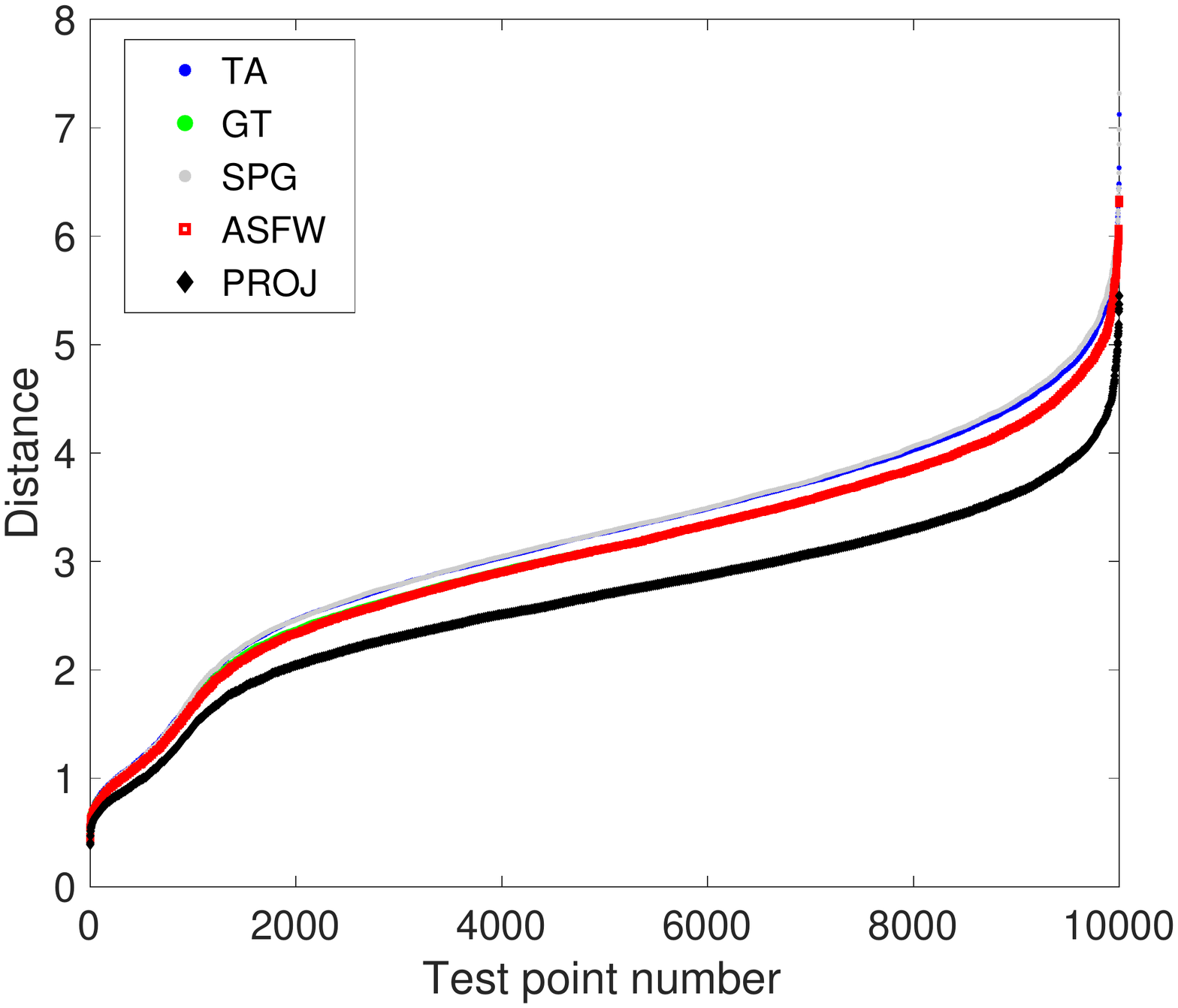} \hfill 
		\includegraphics[scale=0.31,trim=0 100 0 100,clip]{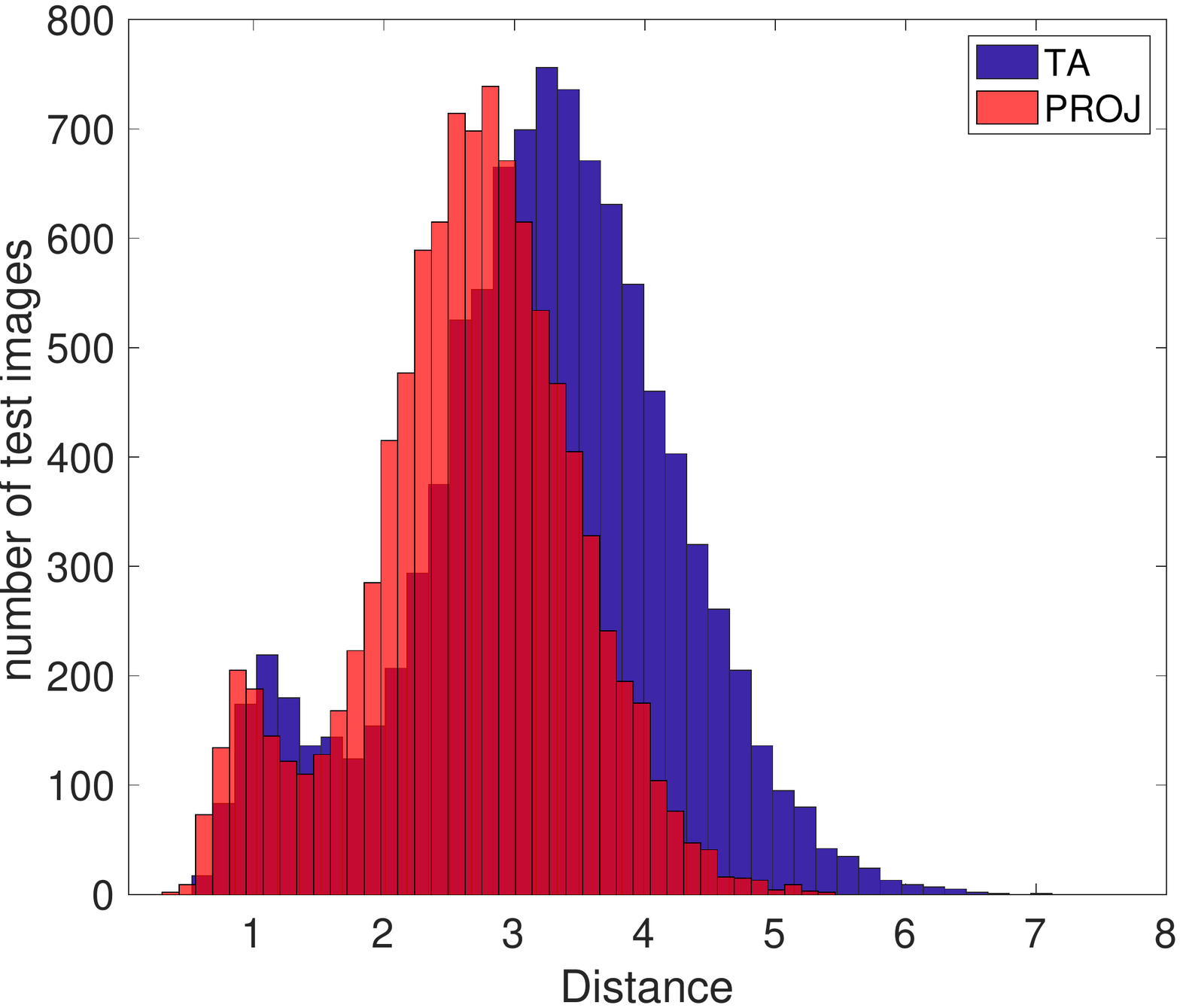}
	\caption{(Left) Distance estimates compared with the distance to the convex hull (PROJ). (Right) Histogram with the distances between the witness and the test point (TA), and the distances between the test point and its projection (PROJ).}\label{fig:distmnist}
\end{figure}

{Secondly, we apply the same procedure for classification as proposed in \citep{yousefzadeh2020}, however we calculate the distance from the testing points to the witness in the convex hull of each class,  and we classify the testing point according to the smallest of the distances. The best accuracy of $98\%$ is achieved by TA in less than 10 minutes, which is not that far from the $98.5\%$ accuracy of PROJ, that required more than 4 hours to process all test points.}

\section{Concluding remarks}\label{sec:end}

{We have shown that first-order methods, such as Frank-Wolfe-type and Projected Gradient-type, 
equipped with appropriate stopping criteria, 
are suitable for solving convex hull membership problems and  are competitive with the recently proposed Triangle Algorithm, which is specific for CHMP.} 

{By exploring the characterization of the so-called pivots, we showed that the Triangle Algorithm can be viewed as an inexact version of the Frank-Wolfe algorithm applied to CHMP whereas the latter, with stopping rules based on distance duality, lead us to a Greedy Triangle Algorithm. This algorithm coincides with an old one, due to von Neumann.} 

{The proposed stopping criteria based on distance duality were essential for the first-order methods to handle the case $p \notin \convA$, avoiding the computation of the projection of the query point onto the convex hull, and also allowing for a fair numerical comparison with the Triangle Algorithm and its variants.}

{A comprehensive set of numerical experiments indicate which algorithm is preferable according to the geometry of the convex hull and the relative position of the query point. Overall, for the random instances considered in this paper, ASFW had a better performance, especially in ``harder to solve'' instances of CHMP, as those of Sections~\ref{sec:caseb} and \ref{sec:cased}, where the classic versions of TA and FW usually present a zigzagging behavior.}

We also gave examples of two potential applications of algorithms for CHMP, one in the linear programming feasibility problem and another in image classification problems.
After casting LP feasibility problem as a CHMP, our experiments show that first-order methods can be superior to the classic dual-simplex algorithm and support the Greedy Triangle algorithm as an efficient candidate to detect infeasibility.
The image classification experiment shows that the use of the distance from a query point to a witness instead of the distance from the point to the convex hull did not change considerably the classification accuracy, and it is about twenty times faster than computing the exact projection.
A further computational study on other image classification datasets is subject of future investigation.

The performance of Frank-Wolfe-type algorithms in feasible instances of LP feasibility problems was different from what we observed in Sections~\ref{sec:casea} and \ref{sec:caseb}.  
As discussed in Section~\ref{sec:lpfeas}, this has to do with the conditioning of the problem (which depends on the geometry of the convex hull), which makes things harder even for the linearly convergent ASFW.  
In future works we plan to investigate this precisely as well as possible acceleration strategies for FW-type methods applied to CHMP.

{A deeper study on Spherical-TA \citep{Kalantari:2019b} and a comparison with the first-order methods considered in this paper shall be subject of our future investigations as well as extensions and applications of distance duality to semidefinite and conic programming \citep{Kalantari:2019,Kalantari:2020a}.}

 \section*{Acknowledgments}
 The authors are grateful to the anonymous referees for meaningful suggestions that helped to improve this paper. 
The authors want to acknowledge Brazilian agency CAPES (Coordenação para Aperfeiçoamento  Pessoal do Ensino Superior) for financial support. \textbf{RF} thanks CAPES for the doctoral scholarship; \textbf{DG} thanks Brazilian agency CNPq (Conselho Nacional de Desenvolvimento Científico e Tecnológico) for grant  305213/2021-0; \textbf{LRS} thanks CNPq for grant 113190/2022-0.
\bibliographystyle{model5-names}\biboptions{authoryear}
\bibliography{Referencias} 

%
\newpage
\appendix
\section{Tables with numerical results from Section~\ref{sec:random}}\label{sec:apa}
\begin{table}[h]
\centering
\caption{Iterations count (average)  for the case where $p\in \convA$}\label{tab:iterationspin}
\begin{tabular}{rccccccc}\toprule
&\multicolumn{4}{c}{\textbf{Case (a)}} &\multicolumn{2}{c}{\textbf{Case (b)}} \\\cmidrule(lr){2-5} \cmidrule(lr){6-7}
${n}$ &\textbf{TA} &\textbf{ASFW} &\textbf{GT} &\textbf{SPG} &\textbf{ASFW} &\textbf{SPG} \\
500 &2557.3 &573.9 &662.2 &23.7 &12 &8 \\
1000 &1544.8 &242.8 &247.9 &15.9 &12.5 &8.8 \\
1500 &1428.1 &195.0 &196.5 &15.0 &13 &9.3 \\
2000 &1373.0 &167.4 &169.7 &13.8 &12 &8.0 \\
2500 &1317.2 &153.4 &153.7 &13.1 &12 &7.8 \\
3000 &1345.9 &147.3 &146.1 &13.1 &12.5 &10 \\
3500 &1288.7 &139.1 &139.5 &13.1 &12 &8.6 \\
4000 &1295.5 &132.3 &133.3 &13.0 &12 &8.9 \\
4500 &1270.0 &129.8 &129.8 &13.0 &13 &10.3 \\
5000 &1309.7 &126.1 &126.6 &13.0 &12 &8.4 \\
$1\times 10^4$ &1261.5 &110.7 &110.5 &12.0 &13 &11.8 \\
$2\times 10^4$ &1254.6 &99.0 &99.0 &11.9 &13 &12.4 \\
$3\times 10^4$ &1220.8 &93.4 &93.4 &12.0 &13 &10.6 \\
$4\times 10^4$ &1251.6 &88.0 &87.8 &12.0 &12 &11.0 \\
$5\times 10^4$ &1235.0 &86.7 &86.7 &12.0 &12 &12.6 \\
$6\times 10^4$ &1243.1 &84.1 &84.1 &12.0 &13 &12.8 \\
$7\times 10^4$ &1265.8 &83.5 &83.4 &12.0 &12 &9.4 \\
$8\times 10^4$ &1247.3 &82.2 &82.2 &12.0 &13 &13.1 \\
$9\times 10^4$ &1233.6 &80.7 &80.8 &12.0 &13 &11.0 \\
$10^5$ &1225.5 &79.3 &79.4 &12.0 &12 &10.2 \\\bottomrule
\end{tabular}
\end{table}

\begin{table}
\centering
\caption{Iterations count (average) for the cases where $p \notin \convA$}\label{tab:iterationspout}
\begin{tabular}{rccccccccc}\toprule
\textbf{} &\multicolumn{4}{c}{\textbf{Case (c)}} &\multicolumn{4}{c}{\textbf{Case (d)}} \\\cmidrule(lr){2-5} \cmidrule(lr){6-9}
$n$ &\textbf{TA} &\textbf{ASFW} &\textbf{GT} &\textbf{SPG} &\textbf{TA} &\textbf{ASFW} &\textbf{GT} &\textbf{SPG} \\
500 &2.3 &1 &1 &1.3 &6570.6 &9.2 &6575.2 &4.6 \\
1000 &3 &1 &1 &1.3 &7347.5 &9.1 &7358.4 &4.0 \\
1500 &1.9 &1 &1 &1.2 &6474 &9.2 &6483.2 &4.8 \\
2000 &2.6 &1 &1 &1.3 &7233.5 &9.0 &7246.2 &4.5 \\
2500 &2.2 &1 &1 &1.5 &6604 &9.1 &6616.4 &4.1 \\
3000 &2.2 &1 &1 &1.5 &6657.9 &9.2 &6670.2 &4.6 \\
3500 &3 &1 &1 &1.4 &6519.3 &9.2 &6529.5 &4.0 \\
4000 &2.0 &1 &1 &1.7 &5553.0 &9.2 &5560.4 &4.8 \\
4500 &2.7 &1 &1 &1.5 &6275.2 &9.2 &6283.2 &5.1 \\
5000 &2.3 &1 &1 &1.5 &6628.0 &9.1 &6638.5 &4.3 \\
$1\times 10^4$ &3.3 &1 &1 &1.3 &6154.4 &9.1 &6165.8 &4.9 \\
$2\times 10^4$ &2.1 &1 &1 &1.8 &5671.3 &9.2 &5680.0 &5.2 \\
$3\times 10^4$ &2.6 &1 &1 &1.6 &6184.6 &9.0 &6201.9 &4.0 \\
$4\times 10^4$ &2.9 &1 &1 &1.6 &6146.2 &9.0 &6159.5 &4.0 \\
$5\times 10^4$ &2.6 &1 &1 &1.6 &5795.8 &9.2 &5811.4 &5 \\
$6\times 10^4$ &2.6 &1 &1 &1.6 &6390.5 &9.1 &6416.2 &4.4 \\
$7\times 10^4$ &3.4 &1 &1 &1.5 &5890.2 &9.0 &5903.4 &4.6 \\
$8\times 10^4$ &2.7 &1 &1 &1.6 &5783.6 &9.1 &5803.0 &4.9 \\
$9\times 10^4$ &2.6 &1 &1 &1.4 &5324 &9.2 &5334.4 &5 \\
$10^5$ &2.6 &1 &1 &1.7 &5542.4 &9.0 &5560.3 &4.4 \\
\bottomrule
\end{tabular}
\end{table}
\end{document}